\newcommand{\dl}[1]{}
\author{Dmitry Vaintrob}
\DeclareFontFamily{U}{BOONDOX-calo}{\skewchar\font=45 }
\DeclareFontShape{U}{BOONDOX-calo}{m}{n}{
  <-> s*[1.05] BOONDOX-r-calo}{}
\DeclareFontShape{U}{BOONDOX-calo}{b}{n}{
  <-> s*[1.05] BOONDOX-b-calo}{}
\DeclareMathAlphabet{\mathcalboondox}{U}{BOONDOX-calo}{m}{n}
\SetMathAlphabet{\mathcalboondox}{bold}{U}{BOONDOX-calo}{b}{n}
\DeclareMathAlphabet{\mathbcalboondox}{U}{BOONDOX-calo}{b}{n}
\newbox\qbox
\def\usecolor#1{\csname\string\color@#1\endcsname\space}
\newcommand\bordercolor[1]{\colsplit{1}{#1}}
\newcommand\fillcolor[1]{\colsplit{0}{#1}}
\newcommand\colsplit[2]{\colorlet{tmpcolor}{#2}\edef\tmp{\usecolor{tmpcolor}}%
  \def\tmpB{}\expandafter\colsplithelp\tmp\relax%
  \ifnum0=#1\relax\edef\fillcol{\tmpB}\else\edef\bordercol{\tmpC}\fi}
\def\colsplithelp#1#2 #3\relax{%
  \edef\tmpB{\tmpB#1#2 }%
  \ifnum `#1>`9\relax\def\tmpC{#3}\else\colsplithelp#3\relax\fi
}
\newcommand\outline[1]{\leavevmode%
  \def\maltext{#1}%
  \setbox\qbox=\hbox{\maltext}%
  \boxgs{Q q 2 Tr \thickness\space w \fillcol\space \bordercol\space}{}%
  \copy\qbox%
}
\def\thickness{.3}% TO CHANGE THICKNESS OF SHADOW
\renewcommand{\phi}{\varphi}
\newcommand{\spf}{\text{Spf}}
\newcommand{\ddo}{\mathring{\dd}}
\renewcommand{\vec}{\overrightarrow}
\renewcommand{\A}{\mathcalboondox{A}}
\renewcommand{\X}{\mathcal{X}}
\newcommand{\Y}{\mathcal{Y}}
\newcommand{\I}{\mathcal{I}}
\renewcommand{\O}{\mathcal{O}}
\renewcommand{\ll}{\mathfrak{M}}
\newcommand{\R}{\mathcal{R}}
\renewcommand{\angle}{\mathtt{angle}}
\newcommand{\comm}{\mathtt{Comm}}
\newcommand{\LogSch}{\mathtt{LogSch}}
\newcommand{\taut}{\mathtt{taut}}
\newcommand{\rotor}{\mathtt{rotor}}
\newcommand{\glue}{\mathtt{glue}}
\newcommand{\selfglue}{\mathtt{selfglue}}
\newcommand{\stabilize}{\mathtt{stabilize}}
\newcommand{\conf}{\mathrm{Conf}}
\newcommand{\reparam}{\mathtt{reparam}}
\newcommand{\fld}{\mathrm{FLD}}
\newcommand{\ffc}{{\pmb{\mathcal{M}}}}
\newcommand{\flc}{{\pmb{\mathfrak{m}}}}
\newcommand{\et}{{\text{\'et}}}
\newcommand{\ann}{\operatorname{Ann}}
\newcommand{\baran}{\overline{\operatorname{Ann}}}
\newcommand{\hourparam}{{\mathtt{hour}^\mathtt{param}}}
\newcommand{\forg}{\mathtt{forg}}
\newcommand{\hour}{\mathtt{hour}}
\newcommand{\Hour}{\mathcal{HG}}
\newcommand{\ggtm}{\mathbb{G}\mathrm{T}\mathcal{M}}
\newcommand{\ggam}{\mathbb{G}_\mathrm{A}\mathcal{M}}
\newcommand{\gglm}{\mathbb{G}_\mathcal{L}\mathcal{M}}
\renewcommand{\a}{\mathfrak{a}}
\newcommand{\Gr}{\operatorname{Gr}}
\title{Moduli of framed formal curves}
\begin{document}
\maketitle
\abstract{We introduce \emph{framed formal curves}, which are formal algebraic curves with boundary components parametrized by the punctured formal disk. We study the moduli space of nodal framed formal curves, which we endow with a logarithmic structure. We show that this moduli space is a smooth formal logarithmic stack. The remarkable property of our construction is that framed formal curves admit a natural operation of ``gluing along the boundary'' which works well in families and preserves smoothness (both in a formal and in a logarithmic sense), and this induces gluing maps on the level of moduli. 

Using moduli spaces of framed formal curves we enhance the operad $E_2$ of little disks (as well as its cousin, the \emph{framed} little disks operad) to a fully log motivic operad. We use this structure to obtain a purely algebro-geometric proof of the formality of chains on these classical operads (initially proven for little disks by Tamarkin using analytic methods). We also recover and extend the known Galois action on the $\ell$-adic cohomology of framed and unframed little disks, and on Drinfeld associators, and extend it to the action on integral chains of a larger group scheme: the logarithmic \emph{motivic Galois group}. Our methods generalize to a higher genus context, giving new ``motivic'' enrichments (for example, action by the absolute Galois group and by the log motivic Galois group) on the operad of chains in the \emph{oriented geometric bordism} operad of Ayala and Lurie, which encodes the algebraic structure on the Hochschild cochains of any fully dualizable DG category. 
}

\newpage
\tableofcontents
\newpage

\section{Introduction \dl{(a1)}}\label{sec:a1}\label{sec:intro}

The topological operad $LD$ of \emph{little disks}, introduced by May in \cite{may}, is responsible for the algebraic properties of double loop spaces $\Omega\Omega(X)$, in particular implying that their homology has \emph{Gerstenhaber algebra} structure (a homologically shifted Poisson structure). \emph{Deligne's conjecture} (now a theorem) states, essentially, that in a derived-algebraic context, the DG operad $C_*(LD)$ of \emph{chains} on $LD$ acts on the Hochschild chains $CH_*(A)$ of any associative algebra. This result is easy to prove using modern tools, in particular Ran spaces (see e.g. Lurie, \cite{lurie_ran}); see also \cite{getzler-jones}, corrected by \cite{mcclure-smith} and \cite{voronov}. However it was first proved in characteristic $0$ using a deep result of Tamarkin: that the operad of little disks is \emph{formal}, i.e.\ there is a chain of quasiisomorphisms of dg operads between chains $C_*(LD, \qq)$ and homology, $H_*(LD, \qq)$ with zero differentials. The motivation for Tamarkin's result was Kontsevich's study of \emph{deformation quantization}, the conjecture (long assumed by physicists before any mathematical proof) that the Poisson algebra of functions $C^\infty(M, \rr)$ on a Poisson manifold $M$ can be (formally\footnote{in an unfortunate coincidence, the adjective ``formal'' describing a nowhere convergent power series has nothing in common with the adjective ``formal'' describing a derived object equivalent to its homology.}) \emph{quantized}, i.e.\ extended to an associative law in formal power series over the parameter $\hbar$, with the Poisson structure on $C^\infty(M)$ describing its classical limit (i.e.\ its basechange to the first-order deformation algebra $\rr[[\hbar]]/\hbar^2 = 0$). Kontsevich showed in \cite{kontsevich_deformation} that deformation quantization follows from a certain strong form of the HKR theorem on Hochschild cohomology of affine space (essentially, the formality of the Lie-$\infty$ algebra structure on Hochschild homology), which he proved directly over $\rr$ using certain physically inspired integrals associated to graphs. 

It was observed by Tamarkin, \cite{tamarkin_quantization} (see also Hinich, \cite{hinich_tamarkin}), that Kontsevich's central argument would follow from formality of chains on the little disks operad, which he proved in \cite{tamarkin_formality} (a revised version of his \cite{tamarkin_squares}), using Drinfeld's theory of universal \emph{associators}, \cite{drinfeld}. Using similar techniques Tamarkin showed, in \cite{tamarkin_action} that, like Drinfeld's theory of associators, the operad of Gerstenhaber algebras (and hence, by formality, that of chains on little disks) has action by the \emph{Grothendieck-Teichm\"uller group}, $\hat{GT}$, a large (algebraic pro-unipotent) group whose points over any nonarchemedian field contains the absolute Galois group in a natural way. There are now multiple proofs of deformation-quantization, formality, and Grothendieck-Teichm\"uller action on little disks, and it is now known that explicit realizations of all of these are (in an appropriate sense) controlled by the same deformation-theoretic object, which can be interpreted on the one hand in terms of Drinfeld associators and on the other in terms of a certain graph complex (Willwacher, \cite{willwacher_action}). Using this relationship, Willwacher (\cite{willwacher_action}, section 10) finds a comparison between Kontsevich's and Tamarkin's universal quantizations. Similarly, it was observed by Petersen \cite{petersen} that by itself the action by the Grothendieck-Teichm\"uller group (or indeed, just by the Galois group $\Gamma_\qq\subset \hat{G}(\qq_\ell)$) implies formality, with splitting given by eigenspaces of certain Frobenius elements. Petersen's techniques can be extended to give partial formality results in positive characteristic: see \cite{horel_formality} (G. Horel and P. de Brito) and in a more general context \cite{cirici-horel} (J. Cirici and G. Horel). Despite the diversity of points of view, all proofs to date of geometrization-quantization and of the formality and Galois group action of chains on the little disks operad ultimately rely on one or another of two mysterious and transcendental analytic techniques: namely Kontsevich's graph integrals and Drinfeld's explicit construction of associators using the KZ equation. The present paper improves upon this status quo by introducing a new model for the operad of little disks as an algebro-geometric operad in a generalized sense and using motivic ideas. 

\subsection{Motives and a conjecture of Kontsevich}
Much less mysteriously, on the level of \emph{spaces} (ignoring operad structure), both the Galois and the Grothendieck-Teichm\"uller actions can be explained directly via algebraic geometry, using the philosophy of \emph{motives}. Namely, for any $n$ the space $LD_n$ of configurations of $n$ disks is canonically homotopic to the configuration spaces $\t{Conf}_n$ of $n$ distinct points in $\aa^1,$ which is an algebraic variety defined over $\qq.$

It has been known since Grothendieck and Artin's discovery of \'etale cohomology that if $X$ is a topological space homotopic to a smooth projective variety defined over $\qq$ then its cohomology has extremely rich additional structure. Its real cohomology $H^*(X, \rr)$ has a \emph{Hodge decomposition}, and its cohomology with $\ell$-adic coefficients $H^*(X,\qq_\ell)$ has an action of the Galois group $\Gamma_\qq :=  \t{Gal}(\qqbar/\qq),$ and the two structures are related in certain ways (for example, for almost all primes $p$ the eigenspaces in $H^i(X,\qq_\ell)$ of a $p$-Frobenius element in $\Gamma_\qq$ and the Hodge summands both split a certain filtration on $H^*(X, \qq)$). If $X$ is not smooth or projective but rather simply a variety of finite type over $\qq$ then, much (though not all) of the additional structure remains on the cohomology of the topological space $X(\cc)$. 

Grothendieck conjectured that all suitably functorial extra structure on $H^*(X)$ of a smooth projective variety is encoded by an object $[X]$ in an abelian category $\M$ of \emph{motives}, and that furthermore this category is accessible by (neutral) Tannakian formalism with Betti cohomology as the fiber functor: in other words, the category of motives is equivalent to the category of representations of a certain infinite-type group scheme $\gg_{\t{Groth}}$ called the \emph{Motivic Galois group}, such that $\M$ is equivalent to the category of finite-dimensional representations of $\gg_{\t{Groth}}$ with coefficients in $\qq$, and such that for any field $\Lambda$, the $\Lambda$-points $\gg_{\t{Groth}}(\Lambda)$ act on $H^*(X, \Lambda)$ in a functorial way compatible with the ring structure (equivalently, compatible with respect to product of schemes). In fact, if it exists $\gg_{\t{Groth}}$ can be fully characterized by the latter property --- so for example, the group $\gg_{\t{Groth}}(\qq_\ell)$ will contain the absolute Galois group, for any $\ell.$ 

Grothendieck's conjectures remain an active area of research, but in the \emph{derived} context, much more is known: there is a good notion of a DG category of (mixed) motives, equipped with a realization functor $X\mapsto [X].$ There is an elegant definition of a derived motivic Galois group scheme, $\ggam$, due to Ayoub \cite{ayoub1}, which acts on Betti cohomology \emph{on chain level} (i.e. for a ring $\Lambda,$ the points $\ggam(\Lambda)$ act in a DG sense on the Betti cochain ring $B^*(X, \Lambda)$ for any scheme $X$: see \cite{ayoub3}.) It is known that when the motive of a variety $X$ is of Tate type (for example, if $X$ has affine stratification), then the action of $\ggam$ factors through the smaller Tate motivic Galois group, which contains the Grothendieck-Teichm\"uller group $\hat{GT}$ as a subgroup.

Thus both the Grothendieck-Teichm\"uller equivariance and, as an easy consequence, the formality of the operad $LD,$ would be explained if the algebraic model $\t{Conf}_n$ for the spaces of operations $LD_n$ could be extended to the full operad structure. However there is a problem: the composition maps between the spaces $\t{Conf}_n$ cannot be made algebraic, even on the level of corresponces! For example an algebraic model for the operad composition map $LD_2\times LD_2\to LD_3$ would imply a nontrivial algebraic map $\gg_m\to \gg_m\setminus \{1\}.$ 

The question of how much of the motivic structure on the homology (and more generally, on chains) of $\t{Conf}$ extends to the operad $FLD$ remains unsolved. In \cite{kontsevich_operads} Kontsevich conjectured that chains on the operad of little disks, as well as Drinfeld associators, have action by the full Tate motivic Galois group. In an unpublished letter to Kontsevich \cite{beilinson_let} (that the author learned about in the process of writing this paper), Beilinson sketched a conjectural method for constructing the Hodge-theoretic filtrations associated to such a conjectural motivic action, observing that these would imply formality\footnote{Beilinson's letter used logarithmic \emph{analytic} techniques, which are related to the present paper's viewpoint.}. 

\subsection{Logarithmic schemes and motives}
One of the takeaways of the present work is that while the little disks operad does not have a nice algebro-geometric model in the category of schemes, it does have a model in a (derived enhancement of) Kato's category of \emph{logarithmic schemes} over $\qq$. The category of logarithmic schemes is an extension of the category of schemes given by essentially allowing, first, ``schemes with boundary'' which are pairs of the form $(X, D)$ for $X$ a scheme and $D\subset X$ a (sufficiently nice) Cartier divisor, and, second, basechanges of certain maps of such pairs. Most classical invariants of schemes extend naturally to logarithmic schemes (as do standard properties of maps, such as smooth, \'etale, etc.). In particular, the Betti chains functor $B_\zz:X\mapsto C_*(X(\cc), \zz)$ from schemes to complexes extends to a functor (which we denote the same way) $B_\Lambda:\X\mapsto C_*(\X^{an}, \Lambda),$ where for a log scheme $\X$ the topological space $\X^{an}$ is the \emph{Kato-Nakayama} (or Betti) \emph{analytification}.

The homology and cohomology of the analytification of a logarithmic variety over $\qq$ (known as Betti cohomology) is known to carry some of the same structure as the cohomology of a non-logarithmic scheme, both on the level of cohomology groups and chains. For example, Betti chains $C_*(\X^{an}, \qq_p)$ with coefficients in $\qq_p,$ have action by the absolute Galois group $\Gamma_\qq$ (\cite{kato-nakayama}, see Corollary \ref{thm:log_betti_comparison} for a chain-level proof). There is a de Rham cohomology with $\qq$ coefficients and a transcendental Betti-de Rham comparison (again, defined on chain level), implying a theory of periods. Indeed, it is conjectured (but currently not known) that the automorphism group of the Betti functor on log schemes is isomorphic to the automorphism group of the Betti functor on schemes. This would follow from an ongoing program by Vadim Vologodsky to show that that the DG category of log motives (suitably defined) is equivalent to the category of ordinary motives, see \cite{vologodsky}. We call this conjectural equivalence ``Vologodsky's conjecture''. Whether or not this is true, one can (formally) define an ($\infty$-categorical) log motivic Galois group stack $\gglm^D$ over the integers (see Appendix \ref{sec:mot-inv}), and our results imply that it acts on the operad of chains on the little disks operad (or more precisely, for $\Lambda$ a ring, the points $\gglm^D(\Lambda)$ act on chains with coefficients in $\Lambda$). If Vologodsky's conjecture is true, this implies action by the Ayoub motivic Galois group $\ggam$, which in this case factors through the Tate motivic Galois group, implies an extension of the $\hat{GT}$ action which is the most general extension one could hope for. Without assuming Vologodsky's conjecture, a consequence of the motivic action is that the absolute Galois group $\Gamma_\qq$ acts on chains on little disks with $p$-adic coefficients ($\zz_p$ or $\qq_p$). This is enough to prove formality, and to describe a new weight structure on the bordism operad.

\subsection{Log geometric operads}
The result about motivic Galois action on the operad of little disks falls out as part of an algebro-geometric story translating to the log algebro-geometric context two other well-known topological operads. The simplest case of this construction is not little disks themselves but rather their close cousin, the operad of \emph{framed} little disks, $FLD$, with spaces of operations consisting of configuration spaces of disks in a larger disk endowed ``framed'' by a choice of constant-speed boundary parametrization of each interior disk. Here the motivic action is a consequence of a direct identification, up to homotopy, of $FLD$ with (the underlying topological operad of) a log operad, which we call $FLD^{log}$. 

The operad $FLD$ of framed little disks is (equivalent to) the genus-zero piece of the larger operad $\mathrm{Bord}^{2,1}$ of \emph{oriented geometric bordisms} (see \cite{ayala} for a general homotopy-theoretic description of such geometric operads). To every point of the configuration $FLD_n$ there corresponds a closed curve with boundary (the complement in the complex unit disk to the interior of the embedded little disks), together with an (analytic) parametrization of the boundary, split into incoming (well-oriented) and outgoing (antioriented) boundary components. The operad composition operation can then be understood as an abstract geometric glueing procedure applied to an outgoing boundary component of one complex curve and the outgoing component of another. One then defines an operad (equivalent to) $\mathrm{Bord}^{2,1}$ by allowing the complex surface with boundary to have arbitrary genus, and allowing the boundary parametrizations to be arbitrary (oriented in such a way that precisely one is an ``outgoing'' boundary component). In fact, this operad structure in higher genus can be extended to an algebraic structure that carries more information, namely, a topological \emph{modular operad} (which subsumes both wheeled operad and PROP structures). The modular operad $\mathrm{Bord}^{2,1}$ appears in homological field theory and also in Lurie's cobordism hypothesis theorem \cite{lurie_cobordism} (at the two-dimensional level), where it essentially comprises the structure that acts on the Hochschild homology of a fully dualizable $\infty$-category (or more generally, fully dualizable object in an $(\infty,2)$-category.

Similarly our logarithmic operad $FLD^{log}$ is the genus zero part of a higher-genus operad $\mathrm{Bord}^{2,1,log},$ with analytification equivalent to $\mathrm{Bord}^{2,1}$. There is a minor difference: the operad $\mathrm{Bord}^{2,1}$ is an operad in \emph{logarithmic orbifolds}, i.e.\ logarithmic stacks which are locally finite quotients (generalizing geometric stacks of Deligne-mumford type). This causes mild technical hurdles about the appropriate notion of operad and modular operad in this context, which are treated in Appendix \ref{sec:app_mod_op}. Just as for log schemes, the logarithmic motivic Galois group $\gglm$ acts on Betti chains, implying action on the chain operad $C_*(\mathrm{Bord}^{2,1}, \zz)$. This leads in particular to action by the absolute Galois group on chains with $\ell$-adic coefficients, $C_*(\mathrm{Bord}^{2,1}, \qq_\ell)$, and implies an interesting new chain-level splitting of this operad by weights of Frobenius. 

\subsection{The motivic Tate curve}
Much like for the genus zero situation, it has long been known that the geometric bordism spaces classifying complex curves of arbitrary genus with parametrized boundary are ``motivic'', i.e.\ homotopy equivalent to algebraic varieties. But our result that the gluing operations on the level of moduli spaces are compatible with this motivic structure seems to be new in higher genus, with one exception. Namely, in \cite{hain}, Hain establishes that a certain pair of Lie algebras associated to noncommutative motives (understood as Lie algebras of a motivic \emph{fundamental} group with additional structure) over $\cc$ are related by a map compatible with mixed Hodge structure, geometrically described by a parametrization of the Tate curve (but not given by any map of actual algebraic varieties or stacks). In the course of the paper, he speculates whether this map extends to a map of motives over $\qq$ (see also \cite{luo_thesis} by Ma Luo, Hain's student). In the language of the present paper, we see that a map matching Hain's descrption of the Tate curve  (or rather a mild extension of it) on real blow-up spaces \emph{is} given by a map of \emph{logarithmic} stacks (and indeed, defined over $\qq$). To this map we can associate a map of (log) motivic Galois groups, containing much (though due to limitations of our knowledge of log motivic Galois groups, not all) of the structures studied by Hain, including compatibility of this map with periods. See Appendix \ref{sec:mot-inv} for a further discussion of this. Motivic comparisons of this nature have traditionally been difficult to prove (Hain's paper uses rather elaborate computations with certain explicit elliptic associators given by solutions to the universal KZB equation), and rich in applications: for example computations related to the motivic map in \cite{hain} are used by Francis Brown \cite{brown_zeta} to deduce interesting equalities of periods (between zeta values and certain modular integrals). It seems that the approach of the present paper is a good context for systematically producing interesting ``gluing'' comparisons of a similar sort between motives and periods associated to moduli spaces of all genera.

\subsection{Main geometric input: framed formal curves}
We construct the logarithmic operads $FLD^{log}$ and $\mathrm{Bord}^{log}$ in several steps starting with a certain formal-geometric moduli problem (over $\qq$), which poses independent interest, and whose construction takes up most of this paper. Namely, recall that a formal scheme is a topological space with a sheaf of profinitely topologized \emph{topological} rings, which is in a suitable sense an inductive limit of formal thickenings of a (finite-type) scheme. A formal curve is a formal scheme whose normalization is a union of ordinary (smooth) curves, possibly with punctures, together with copies of the \emph{formal disk}, $\dd : = \spf(k[[t]])$ (the notation $\spf$ indicating the formal spectrum of a topological ring). We say that a formal curve is \emph{nodal} if all of its singularities are (formally locally) nodes. We say that a formal nodal curve is \emph{stable} if each of the ``global'' components of its normalization is closed (no punctures) and has a discrete group of automorphisms which do not move preimages of singular points (we make no requirement on formal components). Now over a point, nodal stable formal curves are nothing new: they are classified by points of a certain Deligne-Mumford space (and similalry over any reduced base). However, they have a rich deformation theory. For example, the (nodal and stable) ``formal cross'' $\spf (k[[x,y]]/xy = 0)$ has an infinite-order deformation over $k[t]/t^N$ given by $\spf k[[x,y]]/xy = t,$ which is an unobstructed deformation but not a global one (it cannot be extended to $k[t]$ or an \'etale neighborhood). Thus the moduli problem of stable formal nodal curves is highly non-reduced. It is also not representable by any reasonable geometric object: the isotropy, containing the automorphism group of a formal disk, is simply much too large. Instead the objects we study are \emph{framed formal nodal curves}, defined to be (deformations of) stable nodal formal curves with certain additional stabilizing data, namely, that of a \emph{framing}. We define a framing of a formal curve $X$ to be a map from the ``punctured formal disk'', $\ddo : =\t{spec}_{top}k((u))$ to $X$ which (over geometric points) bijects to a localization every formal component (note that as $\ddo$ is not a formal scheme, care needs to be taken to make sense of this data). The data of framings gets rid of all isotropy coming from formal components, though it introduces infinite-dimensionality. Nevertheless, the moduli space of framed formal curves turns out to be an excellently behaved formal object: it is a smooth infinite-dimensional formal stack which is of Deligne-Mumford type, i.e.\ locally the quotient of a smooth formal scheme by a finite group.

Framings should be thought of as formal-geometric analogues of boundary parametrizations. In this sense, framed formal curves are a kind of dual construction to a classically studied construction which we call \emph{coframed} curves in this paper. Namely, we define a \emph{coframing} to be a parametrization, by the punctured disk $\ddo,$ of the punctured formal neighborhood of a smooth but non-proper (global) curve. Curves with such data are studied in e.g.\ \cite{benzvi_coframing}. Our construction however has an important advantage. Namely, given two framed formal curves $X, X'$ together with a choice of a framing on each $\phi, \phi'$, there is a \emph{glued} framed formal curve $\glue_{\phi\sim \phi'}(X, X')$ (something that is not possible in the context of co-framings)\footnote{Gluing of coframings fails since one cannot identify two formal disks with opposite orientations. Using framings, we get around this issue by first gluing in a global genus zero component between the two framings, then stabilizing it away.}. Moreover, our construction makes sense in families and induces an operation on moduli spaces of framed formal curves.

It is these gluing operations which are responsible for all the algebraic operations in the logarithmic operadic objects we define. To define these objects, we endow the moduli space of framed formal curves with a logarithmic structure relative to a certain normal-crossings divisor (classifying ``strictly'' nodal curves). The resulting logarithmic stack is called $\ffc_{g,n,f}$, and is a \emph{smooth} formal logarithmic stack of Deligne-Mumford type. Here $g$ is the genus, $f$ the number of formal (framed) components, and $n$ an additional parameter corresponding to a choice of $n$ marked points (this turns out to be a convenient additional handle on the geometry, just like it is for Deligne-Mumford moduli spaces). When no framings are chosen, the spaces $\ffc_{g,n,0}$ correspond to the standard logarithmic structures on the Deligne-Mumford moduli spaces $\bar{\M}_{g,n}$ relative to the normal-crossings boundary divisor.\footnote{The reason we do not also consider an ``open'' moduli space where the boundary divisor is ``removed'' is because when $f\ge 1$ the resulting object would no longer be an object of formal geometry, as every geometric point would be ``punctured''.} In many of our applications an even more useful object to consider is the object $\flc_{g,n,f}$ given by taking the \emph{reduced} log stack underlying $\ffc_{g,n,f},$ which is a log stack of finite type (though no longer smooth). The operads $FLD^{log}$ and $\mathrm{Bord}^{2,1}$ are built out of the objects $\flc_{g,n,f}$ with $n=0,$ together with their glueing maps.

\subsection{Statement of main results}
In terms of the derived log motivic Galois group $\gglm^D$ introduced above, we prove the following results. 
\subsubsection{The little disks operad}
\begin{thm}\label{thm:log_ld}
For $\Lambda$ a commutative (or connective $E_\infty$) ring, the DG operad of chains on little disks $C_*(LD, \Lambda)$ has (chain-level) action by the derived log motivic Galois group $\gglm^D(\Lambda).$ 
\end{thm}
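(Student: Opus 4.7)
The strategy is to realize $LD$, up to homotopy of topological operads, as the Kato-Nakayama analytification of a logarithmic-algebraic operad defined over $\qq$, and then to invoke the tautological action of $\gglm^D(\Lambda)$ on Betti chains of log Deligne-Mumford stacks. The first step is to assemble from the building blocks of the paper a log operad $FLD^{log}$ whose level-$n$ object is the reduced log DM stack $\flc_{0,0,n+1}$ classifying stable genus-zero framed nodal formal curves with one outgoing and $n$ incoming framings, with composition maps given by the gluing maps $\glue$ on moduli. The requisite operadic formalism in the log-stacky context is supplied by Appendix \ref{sec:app_mod_op}. To model unframed little disks I would produce a companion object $LD^{log}$ from $FLD^{log}$ by trivializing the formal rotational data at each framing (replacing $\mathbb{G}_m$-framing torsors by their identity sections) in a way that remains log-algebraic and compatible with gluing.

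Next I would verify that Kato-Nakayama analytification sends these log operads to their classical topological counterparts. The key geometric input is the identification of the Kato-Nakayama space of $\flc_{0,0,n+1}$ with the real oriented blow-up of $\overline{\mathcal{M}}_{0,n+1}$ along its normal-crossings boundary divisor (so that the $S^1$-fibers over nodal strata record relative angular parameters of framings), combined with the known equivalence between this real blow-up and a Fulton-MacPherson style model for $FLD_n$. Under this identification the gluing morphisms analytify to operadic composition, and the framed-to-unframed passage of step one analytifies to the forgetful map $FLD \to LD$.

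Once the topological operad $LD$ is identified with the analytification $(LD^{log})^{an}$, the theorem reduces to the functoriality of Betti chains. By the definition of $\gglm^D$ in Appendix \ref{sec:mot-inv}, for any $\Lambda$ the points $\gglm^D(\Lambda)$ act as DG automorphisms of the Betti chain functor $\X \mapsto C_*(\X^{an}, \Lambda)$ on log DM stacks over $\qq$, naturally in $\X$. Applying $B_\Lambda$ to $LD^{log}$ level-wise and on composition morphisms then yields a DG operad quasi-isomorphic to $C_*(LD, \Lambda)$ equipped with a chain-level $\gglm^D(\Lambda)$-action by DG operad automorphisms.

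The main obstacle is the geometric comparison, not the formal motivic step. Making the identification $(LD^{log})^{an} \simeq LD$ operadic, rather than merely space-level, requires careful bookkeeping of the stacky isotropy from automorphisms of nodal formal curves, of the matching between log-analytic boundary strata and operadic gluing strata, and of the framed-to-unframed passage: one must choose the correct log-algebraic implementation of forgetting the rotational framings so that analytification commutes with it and the resulting $LD^{log}$ remains of log DM type. Once these geometric comparisons are in place, the chain-level motivic action is formal.
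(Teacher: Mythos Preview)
Your proposal has a genuine gap at the step where you attempt to build an honest log operad $LD^{log}$ by ``trivializing the formal rotational data.'' First, there is no operad map $FLD \to LD$: forgetting framings is a space-level map but is not compatible with operad composition, since composition in $FLD$ rotates the inserted configuration by the framing angle. More fundamentally, as the introduction already warns, the configuration spaces $\mathrm{Conf}_n$ cannot be assembled into an algebraic operad at all: an algebraic composition $\mathrm{Conf}_2 \times \mathrm{Conf}_2 \to \mathrm{Conf}_3$ would entail a nontrivial algebraic map $\gg_m \to \gg_m \setminus \{1\}$. The same obstruction prevents any choice of ``identity section'' of the rotational torsors from producing a log-algebraic $LD^{log}$ with the correct analytification. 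So the object you propose does not exist, and the difficulty you flag in your last paragraph is not merely bookkeeping.

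The paper's route is different and this difference is the content. One keeps $FLD^{log}$ as a genuine log operad and constructs an operad map $\angle: FLD^{log} \to \comm_{\ann^\sim}$ recording the angular data at each framing (Theorem \ref{thm:fld_ld_lim}); on analytifications this becomes the fibration $FLD \to \comm_{S^1}$, whose homotopy pullback along $\comm \to \comm_{S^1}$ is $LD$. The operad $LD$ is then modeled not in $\LogSch$ but in its formal completion under homotopy limits. The Betti cochain functor extends uniquely and limit-preservingly to this completion, and the tautological $\gglm^D(\Lambda)$-action on the functor comes along for free. Lemma \ref{lm:eq_log_lim} identifies the resulting co-operad with $C^*(LD,\Lambda)$, and shows that level-wise it is log-motivically equivalent to $\mathrm{Conf}_n$. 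The essential idea you are missing is that $LD$ is realized only as a \emph{formal homotopy limit} of log-algebraic operads, not as one itself; the motivic action survives because it is an action on the Betti \emph{functor}, hence automatically compatible with formally adjoined limits.
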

\begin{cor}
The DG operad of chains with $\ell$-adic coefficients, $C_*(LD, \zz_\ell)$ has chain-level action by the absolute Galois group $\Gamma_\qq$.
\end{cor}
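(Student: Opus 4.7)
The plan is to derive the corollary as a direct consequence of Theorem \ref{thm:log_ld} by exhibiting a natural group homomorphism $\Gamma_\qq \to \gglm^D(\zz_\ell)$ and then composing with the operadic action furnished by the theorem.

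First I would specialize Theorem \ref{thm:log_ld} to the coefficient ring $\Lambda = \zz_\ell$, which yields a chain-level action of $\gglm^D(\zz_\ell)$ on the DG operad $C_*(LD,\zz_\ell)$. Since this action is by construction compatible with all the operadic structure maps, any subgroup of $\gglm^D(\zz_\ell)$ will automatically act operadically as well.

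Next I would construct the map $\Gamma_\qq \to \gglm^D(\zz_\ell)$. By the (derived) Tannakian definition of $\gglm^D$ sketched in Appendix \ref{sec:mot-inv}, its $\Lambda$-points are (by design) a group of automorphisms of the Betti chain functor $B_\Lambda$ on log schemes defined over $\qq$. For $\Lambda = \zz_\ell$, the Kato-Nakayama comparison (\cite{kato-nakayama}, upgraded to chain level in Corollary \ref{thm:log_betti_comparison}) identifies $C_*(\X^{an},\zz_\ell)$ with $\ell$-adic log-étale chains of $\X$, and hence endows the Betti functor with a natural, functorial action of $\Gamma_\qq$. This natural transformation of functors is by definition an element of $\gglm^D(\zz_\ell)$ for each element of $\Gamma_\qq$, and functoriality in $\Gamma_\qq$ gives the desired group homomorphism.

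Composing these two steps yields the claimed $\Gamma_\qq$-action on $C_*(LD,\zz_\ell)$. The only substantive ingredient beyond Theorem \ref{thm:log_ld} is the chain-level $\Gamma_\qq$-equivariant comparison used to produce the map into $\gglm^D(\zz_\ell)$; once the formalism of Appendix \ref{sec:mot-inv} is in place, this is essentially a packaging exercise, and I expect the only genuinely delicate point to be confirming that the comparison is natural enough (in the $\infty$-categorical sense) to factor through automorphisms of the full Betti functor rather than merely producing actions on individual cohomology groups.
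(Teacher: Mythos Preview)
Your proposal is correct and matches the paper's approach: the paper does not give a standalone proof of this corollary, but its Appendix \ref{sec:mot-inv} defines $\gglm^D(\Lambda)$ precisely as automorphisms of the Betti functor $BR^{log}_\Lambda$ and then exhibits the Galois action via the Kato--Nakayama comparison (Corollary \ref{thm:log_betti_comparison}) as its ``main example of a log motivic structure,'' so the corollary is exactly the specialization $\Lambda=\zz_\ell$ of Theorem \ref{thm:log_ld} composed with this map $\Gamma_\qq\to \gglm^D(\zz_\ell)$. Your identification of the one delicate point (chain-level naturality of the comparison) is also on the mark.
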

Formality with $\qq_\ell$ coefficients, and hence formality with coefficients in any characteristic-zero field follows from the corollary by diagonalizing action of a Frobenius element. 
\begin{cor}
Conditionally on Vologodsky's conjecture (\cite{vologodsky}), $C_*(LD,\qq)$ has action by the \emph{Mixed Tate} motivic Galois group, $\ggtm(\qq).$ 
\end{cor}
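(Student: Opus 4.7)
The plan is to deduce this from Theorem~\ref{thm:log_ld} by transporting the $\gglm^D(\qq)$-action across Vologodsky's equivalence and then exhibiting the resulting action as factoring through the Mixed Tate quotient.

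First, applying Theorem~\ref{thm:log_ld} with $\Lambda = \qq$ gives a chain-level action of $\gglm^D(\qq)$ on $C_*(LD,\qq)$. Vologodsky's conjecture, in its derived form, asserts an equivalence between the DG category of log motives over $\qq$ and Ayoub's DG category of motives intertwining the respective Betti realizations; by Tannakian duality this identifies the derived log motivic Galois group $\gglm^D$ with $\ggam$. Hence one obtains a chain-level $\ggam(\qq)$-action on $C_*(LD,\qq)$.

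Second, I would show this action factors through the Mixed Tate quotient $\ggtm$. By the construction underlying Theorem~\ref{thm:log_ld}, $C_*(LD,\qq)$ is obtained by Betti-realizing the genus-zero logarithmic moduli stacks $\flc_{0,0,f}$ together with their glueing correspondences. The underlying reduced coarse spaces are assembled from the Deligne-Mumford moduli $\bar{\M}_{0,n}$, which admit affine stratifications by combinatorial types of stable rational curves and hence have Tate motive. The standard log structure relative to the boundary normal-crossings divisor is compatible with this stratification: the Kato-Nakayama analytification of each stratum is (up to homotopy) a torus bundle over its real blow-up, contributing only Tate factors. Thus every Betti log motive appearing in the construction lies in the Tate subcategory, and the $\ggam(\qq)$-action factors through the surjection $\ggam \twoheadrightarrow \ggtm$, yielding the claimed action of $\ggtm(\qq)$.

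The main obstacle is to verify the Tate-type property for the \emph{full diagram} of logarithmic moduli and glueing correspondences used in Theorem~\ref{thm:log_ld}, and not merely for each moduli stack $\flc_{0,0,f}$ in isolation. The glueing correspondences are morphisms of log-smooth stacks whose underlying geometry is modelled on boundary strata of higher $\bar{\M}_{0,n+m}$, so Tateness should propagate; nevertheless one must carefully track that every step of the Betti realization --- in particular the Kato-Nakayama construction applied to the glueing spans --- produces objects in the Tate subcategory, and that the identification between the derived and underived Tate motivic Galois groups over $\qq$ is compatible with the operadic action obtained on chains.
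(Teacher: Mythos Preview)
Your proposal is correct and follows essentially the same approach as the paper: apply Theorem~\ref{thm:log_ld} with $\Lambda=\qq$, invoke Vologodsky's conjecture to identify $\gglm^D$ with $\ggam$, and then argue that the action factors through the Mixed Tate quotient because all the motives involved are of Tate type. The paper does not spell out a separate proof of this corollary; the justification is implicit in the surrounding discussion in the introduction and in Lemma~\ref{lm:eq_log_lim}.

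One small refinement worth noting: for the Tate-type step you argue via the affine stratification of $\bar{\M}_{0,n}$ underlying $\flc_{0,0,f}$, which pertains directly to $FLD^{log}$. For $LD$ itself the paper's construction (Theorem~\ref{thm:fld_ld_lim} and Lemma~\ref{lm:eq_log_lim}) realizes $LD_n$ as a homotopy fiber that is motivically equivalent to the configuration space $\t{Conf}_n$, which has an evident affine stratification. This gives a slightly more direct route to Tateness for $LD$ and simultaneously handles your stated ``main obstacle'' about the full diagram: since the entire pullback diagram is identified, level by level, with ordinary varieties of Tate type, there is no further compatibility to check.
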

This last (conditional) corollary extends the known Grothendieck-Teichm\"uller action on $LD$ to a chain-level action of the full group of mixed Tate motives, implying the ``best possible'' motivic structure on the little disks operad. As structures that act naturally on little disks also act on Drinfeld associators, this completes (conditionally) the proof of the main conjecture in \cite{kontsevich_operads}. 

\subsubsection{The framed little disks operad}
\begin{thm}\label{thm:log_fld}
The operad $FLD$ of framed little disks is canonically equivalent (via a chain of homotopy equivalences of operads) to an operad in log schemes, $FLD^{log}$.
\end{thm}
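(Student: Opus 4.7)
The plan is to realize $FLD^{log}$ directly from the moduli stacks constructed in the body of the paper and then exhibit a zigzag of operad homotopy equivalences relating $FLD$ to its Kato--Nakayama analytification. First I would set $FLD^{log}_n := \flc_{0,0,n+1}$ (genus $0$, no marked points, $n+1$ framed components, of which one is designated ``outgoing'' and the remaining $n$ ``incoming''), and define the partial composition $\circ_i$ to be the gluing operation $\glue$ that identifies the outgoing framing of one curve with the $i$-th incoming framing of another. Associativity and $\Sigma_n$-equivariance follow from the compatibility of $\glue$ with families and with the symmetries of the moduli problem, both established earlier in the paper; this promotes $FLD^{log}$ to an operad in log orbifolds in the sense of Appendix \ref{sec:app_mod_op}.

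Next I would construct a map $\alpha_n \colon FLD_n \to (FLD^{log}_n)^{an}$ as follows. A configuration of $n$ framed little disks $(D_i, \partial_i)$ inside the unit disk $D$ produces a closed genus-zero Riemann surface $\Sigma := D \setminus \bigsqcup_i \mathrm{int}(D_i)$ with $n+1$ analytically parametrized boundary circles. Taking the formal completion of $\Sigma$ at each boundary and recording the Taylor expansion of the analytic boundary coordinate as a map from $\ddo$ produces a framed formal curve over $\mathrm{Spec}(\cc)$, hence a point of the Kato--Nakayama analytification of $\flc_{0,0,n+1}$. Continuity in the configuration is clear. To check that $\{\alpha_n\}$ respects operad composition, one observes that topological insertion of one configuration into the $i$-th disk of another, and algebraic gluing of the corresponding framed formal curves, produce formal neighborhoods that differ only by a boundary reparametrization lying in the contractible space of radial rescalings interpolating between them.

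Finally, I would verify that each $\alpha_n$ is a weak equivalence by identifying both source and target, up to canonical homotopy, with the classical moduli space of genus-zero Riemann surfaces with $n+1$ analytically parametrized boundary circles. For $FLD_n$ this is the standard radial-retraction identification. For $(FLD^{log}_n)^{an}$ it follows from an unraveling of the Kato--Nakayama functor on $\flc_{0,0,n+1}$: the functor replaces each formal puncture with a real-blown-up boundary circle, and the framing data descends to an analytic parametrization of that circle, while the infinitesimal deformation directions of the log stack contract away.

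The hard step will be this last identification. A framing is by definition an $\ddo$-valued map, encoding an infinite tower of Laurent coefficients, whereas an analytic boundary parametrization is a finite-dimensional datum. The main technical task is to show that the space of framings over a Kato--Nakayama family deformation-retracts onto the space of analytic boundary parametrizations --- equivalently, that over each geometric point the positive-power Laurent data is determined by the leading term up to contractible choice, and that this contraction is natural in families. Once this local statement is in place it globalizes across the $n+1$ boundary components and is manifestly compatible with $\glue$, so the operad-level equivalence follows from the pointwise weak equivalence established above.
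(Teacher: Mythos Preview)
Your setup of $FLD^{log}_n := \flc_{0,0,n+1}$ with $\glue$ as composition matches the paper exactly. The gap is in your description of the analytification and hence in the map $\alpha_n$ you propose.

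A point of $(\flc_{0,0,n+1})^{an}$ is \emph{not} a framed formal curve over $\cc$. The stack $\flc_{0,0,n+1}$ is the \emph{reduced} log stack underlying $\ffc_{0,0,n+1}$; the infinite tower of Laurent coefficients you worry about in your ``hard step'' has already been discarded at the algebraic level when passing from $\ffc$ to $\flc$. What remains is a log orbifold of finite type whose underlying scheme is $\bar{\M}_{0,n+1}$, with log structure pulled back from $\M_{0,2(n+1)}^{log}$ (Lemma~\ref{lm:ffc-as-pullback} and the discussion in Section~\ref{univ_fam_log}). Its Kato--Nakayama space is therefore a finite-dimensional real orbifold: concretely, the real blow-up of $\bar{\M}_{0,n+1}$ along its boundary, fibered by copies of $S^1$ coming from the relative log points at the framed markings (Section~\ref{geom_realn}). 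So your ``Taylor expansion'' recipe does not land in $(\flc_{0,0,n+1})^{an}$, and your proposed contraction of higher Laurent data is solving a problem that is no longer there.

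The paper's route is different and avoids building any direct map from $FLD$. It first identifies $(\flc_{0,0,n+1})^{an}$, together with the analytified gluing maps, with the Kimura--Stasheff--Voronov spaces and their geometric gluing (the Lemma in Section~\ref{geom_realn}, reduced via the monoid $\bar{\A}_m$ to the semigroup structure on $\pt_{log}$, whose realization is multiplication on $S^1$). It then invokes \cite{ksv} for the chain of homotopy equivalences between the KSV operad and the operad of framed conformal surfaces, which is in turn equivalent to $FLD$. If you want to salvage your approach, the correct target for $\alpha_n$ is this real blow-up model, and the datum you extract from a disk configuration should be the underlying nodal curve together with the $S^1$-basepoints on the boundary circles --- at which point you have essentially rediscovered the KSV comparison rather than a new argument.
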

\begin{cor} The motivic Galois group $\gglm^D(\zz)$ acts on chains $C_*(FLD,\zz)$.\end{cor}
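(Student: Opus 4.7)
The plan is to derive the corollary as a direct consequence of Theorem~\ref{thm:log_fld}, combined with the functoriality of Betti chains on log stacks and the defining property of the derived log motivic Galois group $\gglm^D$. First, by Theorem~\ref{thm:log_fld}, there is a zigzag of operad homotopy equivalences between $FLD$ and (the underlying topological operad of) $FLD^{log}$. Applying the Betti chains functor $B_\zz:\X\mapsto C_*(\X^{an},\zz)$ termwise turns this into a zigzag of quasi-isomorphisms of DG operads between $C_*(FLD,\zz)$ and $C_*(FLD^{log}, \zz):=B_\zz(FLD^{log})$. So it suffices to produce a chain-level action of $\gglm^D(\zz)$ on $C_*(FLD^{log},\zz)$ that is compatible with the composition maps.

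Next, I would appeal to the definition of $\gglm^D$ recalled in the introduction (and set up in detail in Appendix~\ref{sec:mot-inv}): by construction $\gglm^D(\Lambda)$ acts DG-functorially on $B_\Lambda(\X)$ for any log stack $\X$ (of the type considered here), with the action compatible with arbitrary maps of log stacks and with products. Since $FLD^{log}$ is an operad in log stacks, its structure consists of objects $FLD^{log}_n$, symmetric group actions, and composition morphisms $FLD^{log}_m\times FLD^{log}_{n_1}\times\cdots\times FLD^{log}_{n_m}\to FLD^{log}_{m-1+\sum n_i}$ in the category of log stacks. Applying $B_\zz$ and invoking naturality of the $\gglm^D(\zz)$-action with respect to all of these structure morphisms (and with respect to the Künneth isomorphism converting log-scheme products into tensor products of chains) gives a $\gglm^D(\zz)$-action on the DG operad $C_*(FLD^{log},\zz)$ in the strict sense. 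Transporting along the zigzag of quasi-isomorphisms above (working in the $\infty$-category of DG operads) produces the desired action on $C_*(FLD,\zz)$.

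The main technical hurdle I expect is the passage from the functorial chain-level action on individual log stacks to an action in the $\infty$-category of DG operads, due to the fact that $FLD^{log}$ is really an operad in log orbifolds (as noted in the discussion of $\mathrm{Bord}^{2,1}$) rather than in log schemes, and that the Kato–Nakayama analytifications occur in a stacky sense. This means one cannot simply take a strict action on underlying complexes and call it done: one needs the motivic action to be compatible with the appropriate $\infty$-categorical notions of operad and of morphism, which is precisely the framework set up in Appendix~\ref{sec:app_mod_op} and Appendix~\ref{sec:mot-inv}. Once that formalism is in place, the corollary is purely formal from Theorem~\ref{thm:log_fld} and the functoriality of the Betti realization equipped with its $\gglm^D$-action.
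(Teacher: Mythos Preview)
Your proposal is correct and matches the paper's intended argument: the corollary is stated without proof in the paper, as it follows immediately from Theorem~\ref{thm:log_fld} together with the definition of $\gglm^D$ (Appendix~\ref{sec:mot-inv}) as the automorphism group of the Betti realization functor $B_\Lambda$ on log schemes, applied with $\Lambda=\zz$.

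One small correction worth noting: the technical hurdle you anticipate is milder than you suggest. In genus zero the spaces $FLD^{log}_n=\flc_{0,0,n+1}$ have underlying scheme $\bar{\M}_{0,n+1}$, which is an honest scheme (stable genus-zero marked curves have no automorphisms), so $FLD^{log}$ is genuinely an operad in log \emph{schemes}, as Theorem~\ref{thm:log_fld} states. The orbifold subtlety and the machinery of Appendix~\ref{sec:app_mod_op} are only needed for the higher-genus operad $\mathrm{Bord}^{2,1,\log}$. For $FLD$ itself, the $\infty$-categorical bookkeeping reduces to checking that the $\gglm^D(\zz)$-action is symmetric monoidal (compatible with K\"unneth) and natural in maps of log schemes, which is built into Definition~\ref{log_mot_galois}.
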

\begin{cor}
The DG operad of chains with $\ell$-adic coefficients, $C_*(FLD, \zz_\ell)$ has chain-level action by the absolute Galois group $\Gamma_\qq$.
\end{cor}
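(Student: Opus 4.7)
The plan is to deduce this corollary directly from the preceding one, which asserts that $\gglm^D(\zz)$ acts at chain level on $C_*(FLD,\zz)$. Base-changing scalars along $\zz\to\zz_\ell$ gives a chain-level action of $\gglm^D(\zz_\ell)$ on $C_*(FLD,\zz_\ell)$, where throughout I tacitly use the equivalence $C_*(FLD,\zz_\ell)\simeq C_*(FLD^{log,an},\zz_\ell)$ supplied by Theorem \ref{thm:log_fld} to identify the two operads of chains.

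The remaining task is to produce a natural continuous homomorphism $\Gamma_\qq\to\gglm^D(\zz_\ell)$, since the desired $\Gamma_\qq$-action is then obtained by restriction. For this I would invoke Corollary \ref{thm:log_betti_comparison}, which promotes the Kato--Nakayama comparison to a chain-level comparison between the $\ell$-adic Betti chains and the $\ell$-adic log-\'etale chains of any log scheme over $\qq$. This supplies a canonical chain-level action of $\Gamma_\qq$ on $C_*(\X^{an},\zz_\ell)$ which is natural in $\X$ (in a homotopy-coherent sense). By the construction of $\gglm^D$ in Appendix \ref{sec:mot-inv} as (essentially) the automorphism group of the Betti realization on log motives, a functorial family of such Galois actions is precisely the data of the sought homomorphism $\Gamma_\qq\to\gglm^D(\zz_\ell)$. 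Composing with the base-changed action of the first step yields the required $\Gamma_\qq$-action on $C_*(FLD,\zz_\ell)$, and this action is operadically compatible because both constituent structures are.

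The main obstacle --- which is the substance dealt with elsewhere in the paper rather than here --- is the passage from a cohomological comparison to a genuinely \emph{chain-level and operadically coherent} Galois action. The operad composition in $FLD^{log}$ is built out of explicit gluing maps of log stacks coming from the moduli spaces $\flc_{0,0,f}$, and the Galois action must intertwine these maps up to higher homotopy. Once Corollary \ref{thm:log_betti_comparison} is in hand, however, everything else is a formal consequence: apply Theorem \ref{thm:log_fld} to replace $FLD$ with $FLD^{log}$ up to homotopy, take $\ell$-adic log Betti chains operad-wise, and transport the Galois action through the resulting chain of equivalences of DG operads.
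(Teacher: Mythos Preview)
Your proposal is correct and matches the paper's intended argument: the corollary is not given a separate proof in the paper, but the logic you outline---replace $FLD$ by the log operad $FLD^{log}$ via Theorem~\ref{thm:log_fld}, then invoke the Kato--Nakayama chain-level comparison (Corollary~\ref{thm:log_betti_comparison}) to obtain a functorial $\Gamma_\qq$-action on $\ell$-adic Betti chains of log schemes over $\qq$, hence on the operad---is exactly what the surrounding text and Appendix~\ref{sec:mot-inv} set up. Your factoring through a homomorphism $\Gamma_\qq\to\gglm^D(\zz_\ell)$ is a slightly more formal packaging than strictly needed (one can appeal to the comparison directly without naming the universal group), but it is the same content.
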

As an immediate consequence we deduce the formality of this operad over any characteristic-zero field (first proven in \cite{gs_formality} and \cite{severa}), with splitting over $\qq_\ell$ given by diagonalizing a Frobenius in $\Gamma_\qq.$ We also note that conditionally on \cite{vologodsky}, the Motivic Galois group of mixed Tate motives $\ggtm(\qq)$ acts on $C_*(FLD,\qq)$. 

% % The operad $FLD$ of framed little disks turns out to be equivalent to another important topological operad, namely the (oriented, two-dimensional) ``geometric Bordism'' $B_{g=0}^{2,1}$ operad which classifies \emph{genus zero} smooth complex curves with parametrized boundary. In this incarnation it has a higher-genus extension, $\mathrm{Bord}^{2,1}$ (made sense of as a homotopy-theoretic object by, e.g., David Ayala, \cite{ayala}). In fact, the higher-genus version is best understood not as an operad but a richer algebraic structure: a topological category, a wheeled PROP, or, most generally, a topological \emph{modular operad}. This operad appears in homological field theory and also in Lurie's cobordism hypothesis theorem, where it comprises the structure that acts on the Hochschild cohomology of a fully dualizable DG category.

%Another of the main results of this paper is a higher-genus extension of the above results. Namely,
\subsubsection{The $2$-bordism operad}
\begin{thm}\label{thm:log_bord}
There exists an operad $\mathrm{Bord}^{2, 1, log}$ of logarithmic orbifols (stacks of Deligne-mumford type with log structure) whose analytification is $\mathrm{Bord}^{2,1}$.
\end{thm}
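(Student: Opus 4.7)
The plan is to assemble the operad $\mathrm{Bord}^{2,1,log}$ directly from the reduced moduli stacks $\flc_{g,0,f}$ of framed formal nodal curves constructed earlier in the paper, using the gluing operations on families as the structure maps. For the arity-$n$ space of operations I take $\bigsqcup_g \flc_{g,0,n+1}$, interpreting $n$ of the framings as incoming boundary parametrizations and the remaining one as outgoing. The symmetric-group action is by relabeling the incoming framings, and the composition $\flc_{g,0,n+1} \times \flc_{g',0,m+1} \to \flc_{g+g',0,n+m}$ is obtained by applying $\glue_{\phi \sim \phi'}$ to the outgoing framing of the second operand and a chosen incoming framing of the first. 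Since the main geometric theorem of the paper says that $\glue$ is well-defined on families of framed formal curves and produces a new smooth log stack of DM type, these structure maps land in the correct moduli.

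With the data in hand, I would check the operadic axioms. Equivariance is immediate from the construction. Associativity reduces to the statement that two iterated gluings of a triple $(X,X',X'')$ along compatible framings produce canonically isomorphic log stacks, which follows because the gluing construction is characterized by a universal property in the category of framed formal curves, and both iterated gluings classify the same combinatorial data (a nodal curve with the prescribed dual graph and prescribed remaining framings). The orbifold subtleties--namely that $\flc_{g,0,f}$ is only of DM type, so the natural target category of the operad is a $2$-category of log orbifolds--are precisely the setting treated in Appendix \ref{sec:app_mod_op}, which I would invoke to make the word ``operad'' rigorous here. In fact the same construction upgrades to the richer modular operad structure by allowing gluings of any two framings, including self-gluings via $\selfglue$; the $2$-bordism operad is then its cyclic/linear shadow.

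The remaining task is the identification of analytifications. I would show that for each $(g,n)$ the Kato--Nakayama space $(\flc_{g,0,n+1})^{an}$ is naturally homotopy equivalent to the space of genus-$g$ oriented surfaces with $n$ incoming and one outgoing parametrized boundary circle (modulo orientation-preserving diffeomorphism), which is the standard model of the arity-$n$ operations in $\mathrm{Bord}^{2,1}$. The key local input is that the Kato--Nakayama analytification of the punctured formal disk $\ddo = \spec_{top} k((u))$ is a parametrized circle, so a framing of a formal component analytifies to a boundary parametrization, and the real blow-up of the nodal log structure matches the standard real blow-up picture of bordisms at the node. Functoriality of Kato--Nakayama and compatibility of $\glue$ with the topological gluing of surfaces then lift this pointwise identification to an equivalence of operads.

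The hardest step is the analytification comparison, which requires a careful matching of the Kato--Nakayama space of a family of framed formal curves with the topological moduli of parametrized-boundary surfaces, compatible with gluing. Once this is done, Theorem \ref{thm:log_bord} is essentially bookkeeping on top of the smoothness and gluing theorems for $\ffc_{g,n,f}$ and the general $2$-categorical operad formalism of Appendix \ref{sec:app_mod_op}.
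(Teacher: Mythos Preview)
Your overall strategy is exactly the paper's: define $\mathrm{Bord}^{2,1,log}$ with $n$-ary operations $\bigsqcup_g \flc_{g,0,n+1}$ and composition given by $\glue$, invoke Appendix~\ref{sec:app_mod_op} for the $2$-categorical operad formalism, and then compare analytifications. The associativity and equivariance checks you outline are handled in the paper via the order-independence of stable gluing (locality of stabilization) rather than a universal-property argument, but either works.

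One technical point in your analytification step would fail as written. The punctured formal disk $\ddo=\spec_{top}k((u))$ is not an fs log scheme (it is not even a formal scheme), so it has no Kato--Nakayama analytification; the statement ``$\ddo^{an}$ is a parametrized circle'' is not well-posed. The paper instead uses the chained rotor map $\rotor_{1,\dots,f}:\flc_{g,0,f}\to \M_{g,f}^{log}$ and identifies its fiber with a product of \emph{relative log points} associated to the tangent line bundles at the markings (Section~\ref{univ_fam_log}); it is $\pt_{log}$, not $\ddo$, whose analytification is $S^1$. With that correction the fiberwise picture is precisely the one you describe. For the global comparison, the paper does not argue directly against $\mathrm{Bord}^{2,1}$ but shows that $\flc_{g,0,f}^{an}$ with its gluing coincides with the Kimura--Stasheff--Voronov spaces of \cite{ksv}, and then cites \cite{ksv} for the chain of homotopy equivalences to framed conformal surfaces; the compatibility of analytified $\glue$ with geometric gluing is reduced to the annulus case, where it becomes the semigroup structure on $\pt_{log}$ analytifying to multiplication on $S^1$.
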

\begin{thm}\label{thm:log_bord_weight}
  The absolute Galois group $\Gamma_\qq$ acts canonically, in the derived category, on $C_*(\mathrm{Bord}^{2,1},\qq_\ell)$. This induces a weight filtration on $C_*(\mathrm{Bord}^{2,1}, \qq_\ell)$ whose associated graded is non-equivariantly isomorphic to $C_*(\mathrm{Bord}^{2,1}, \qq_\ell).$
\end{thm}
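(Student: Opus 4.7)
The strategy is to extract the Galois action from the log algebro-geometric model provided by Theorem~\ref{thm:log_bord}, then adapt Petersen's Frobenius eigenspace splitting to the operad structure.

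First, by Theorem~\ref{thm:log_bord} there is an operad $\mathrm{Bord}^{2,1,log}$ in log orbifolds defined over $\qq$ (assembled from the reduced log stacks $\flc_{g,0,f}$ and their gluing morphisms) whose Kato-Nakayama analytification is $\mathrm{Bord}^{2,1}$. Applying the chain-level log Betti comparison (Corollary~\ref{thm:log_betti_comparison}) component-wise produces a canonical $\Gamma_\qq$-action, in the derived category, on $C_*(\flc_{g,0,f}^{an}, \qq_\ell)$ for every $(g,f)$. Because every operadic composition is a morphism of $\qq$-log stacks, the induced chain map is automatically $\Gamma_\qq$-equivariant, so these actions assemble into a single action on the whole chain operad $C_*(\mathrm{Bord}^{2,1},\qq_\ell)$.

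Next, I would construct the weight filtration. Each $\flc_{g,0,f}$ is a log stack of finite type over $\qq$, and its Kato-Nakayama space has $\ell$-adic cohomology carrying a canonical mixed weight structure (either via Deligne's characterization by Frobenius eigenvalues at primes of good reduction, or via a log-smooth log-proper hyperresolution, with which $\flc_{g,0,f}$ is comfortable since the ambient $\ffc_{g,0,f}$ is log smooth). Compatibility of this filtration with operad composition follows because all compositions are $\qq$-log morphisms and hence strict for weights. Upgrading from a filtration on cohomology to one on chains compatible with the full operad structure should proceed by passing to a cofibrant model in the category of filtered symmetric sequences, in the manner of \cite{horel_formality} and \cite{cirici-horel}.

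Finally, to obtain a (non-equivariant) isomorphism between $C_*(\mathrm{Bord}^{2,1},\qq_\ell)$ and its associated graded, I would imitate the Frobenius eigenspace splitting of Petersen. Fix a prime $p$ of good reduction and a lift $\phi_p \in \Gamma_\qq$ of geometric Frobenius. By Deligne's theorem (Weil~II, extended to the log setting via the above hyperresolution), $\phi_p$ acts semisimply on each graded piece with eigenvalues that are Weil numbers of the appropriate weight; in particular the eigenvalue multisets on distinct weights are disjoint inside $\overline{\qq}_\ell$. The generalized eigenspace decomposition of $\phi_p$ therefore canonically splits the weight filtration on every component, and because every operadic composition map is $\phi_p$-equivariant it preserves this decomposition, so the splitting upgrades to an operad-level isomorphism $C_*(\mathrm{Bord}^{2,1},\qq_\ell) \cong \mathrm{gr}^W C_*(\mathrm{Bord}^{2,1},\qq_\ell)$. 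The isomorphism is not $\Gamma_\qq$-equivariant because it depends on the arbitrary choice of $\phi_p$.

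The main obstacle I anticipate is the chain-level and operadic upgrade of the weight filtration. Deligne's formalism a priori produces a filtration on cohomology, so one has to pass to a cofibrant model in filtered operads and verify that composition maps remain strict for the filtration after such a replacement; this is where the machinery of \cite{horel_formality} and \cite{cirici-horel} is essential, together with some care for the Deligne-Mumford stacky structure on $\flc_{g,0,f}$. Once that technical step is in place, Weil~II together with the eigenvalue-disjointness observation supplies the splitting essentially formally.
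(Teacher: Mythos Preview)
Your outline is in the right spirit---the paper also deduces the Galois action from the log model via Corollary~\ref{thm:log_betti_comparison} and then splits by Frobenius eigenvalues---but you miss the paper's main geometric input and take a more roundabout path to the splitting.

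The paper does not try to develop weight theory directly on the log stacks $\flc_{g,0,f}$. Instead it proves Lemma~\ref{lm:is_algebraic}: each $\flc_{g,n,f}$ is linked by a chain of analytification-equivalences of fs log schemes to the \emph{ordinary} Deligne--Mumford stack $\M_{g,n,\vec{f}}$ (curves with nonzero tangent vectors at the last $f$ markings), which is the complement of a normal-crossings divisor in a smooth proper stack over $\zz$. This reduction is what guarantees the Galois representations are ``motivic'' and that Frobenius eigenvalues are Weil numbers of well-defined weight. Your appeal to a ``log-smooth log-proper hyperresolution'' is vague here: weight theory for log \'etale cohomology of arbitrary log stacks is not standard, and the paper explicitly flags that equating log and ordinary motives is an open conjecture (Vologodsky). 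Without Lemma~\ref{lm:is_algebraic} or something equivalent, the existence of a canonical weight structure on $C_*(\flc_{g,0,f}^{an},\qq_\ell)$ is not justified.

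Second, the paper avoids your anticipated obstacle entirely. Rather than building a weight \emph{filtration} on cohomology and then lifting it to chains via the machinery of \cite{horel_formality} or \cite{cirici-horel}, the paper works directly in the symmetric monoidal derived category of $\qqbar_\ell[F]$-modules (for a fixed Frobenius $F$), which is canonically \emph{graded} by generalized eigenvalue. Once Lemma~\ref{lm:is_algebraic} ensures the eigenvalues are Weil numbers, this collapses to an $\nn$-grading by weight at chain level, automatically compatible with the operad structure because the grading is symmetric monoidal. This sidesteps the filtration-then-split two-step and the cofibrant-replacement issues you flag.
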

For the $g=0$ piece, the weight filtration is ``pure'', i.e.\ splits the homological filtration and implies formality. In higher genus, the filtration is ``mixed'' and implies a split filtration on $C_*(\mathrm{Bord}^{2,1}, \qq_\ell)$ which is independent of the homological filtration. In an upcoming paper we plan to describe this filtration more canonically, as the perverse filtration on a certain pushforward sheaf between formal varieties. 

\section{Acknowledgements}
This work grew out of joint work with Alexandru Oancea \cite{oancea_vaintrob} on operads and moduli spaces in an analytic context. Many of the ideas here presented were developed through our discussions and speculations, and the present paper owes its existence to him. A conversation with Akhil Matthew in Paris about operads and motives was also instrumental to this paper's conception. Other people whose discussions and suggestions have been valuable for this work are Vadim Vologodsky, Pierre Deligne, Roman Travkin (whose suggestion it was to use the Tannakian point of view on motives), Nick Rozenblyum, Pavel Etingof, Paul Seidel, Vladimir Hinich, Isabel Vogt, and Eric Larson. I'd like to express some of the inexpressible gratitude I have to my father, Arkady, for reading my drafts, and for sharing with me over the years his love and knowledge of many of the subjects that have, in some cases quite unexpectedly, found their way into this paper. Finally I'd like to thank Ilan Roth for generously providing an idyllic setting for writing this in his beautiful Berkeley home. 

\section{Notations and conventions \dl{(a2)}}\label{not-conv}\label{sec:a2}
\noindent {\bf Schemes} in this paper will be assumed separable and of finite type over a base field $k$ of characteristic $0$.

\noindent A {\bf stack} in this paper will denote any functor from finite-type schemes to groupoids satisfying \'etale descent\footnote{In fact all stacks we encounter also satisfy fppf descent.} (a more general notion than the one used in e.g.\ the stacks project \cite{stacks_project}, where stacks are required to have fppf covers by schemes). 

\noindent An {\bf orbifold}, also known as a {\bf Deligne-Mumford stack}, is a stack that has affine diagonal and an \'etale cover by schemes. 

An {\bf ind-orbifold} is a direct limit of orbifolds under locally closed immersions. The new moduli spaces we will construct will be ind-orbifolds, though in many cases this will be nontrivial to prove. 

\noindent {\bf Singular, nodal, unstable (curves).} All ``negative'' mathematical adjectives applied to curves such as ``singular'', ``nodal'', ``unstable'', etc., are understood to mean ``at worst singular'', ``at worst nodal'', ``at worst unstable'', and so on. If we need to specify that a curve is not smooth (or not stable, etc), we will say ``strictly singular'', ``strictly nodal'', ``strictly unstable'', etc.

\noindent {\bf Coordinates of linear schemes.} It will be convenient for us to work simultaneously with several distinct copies of $\aa^1$, $\gg_m,$ $\pp^1$ or the formal disk $\dd^1$ (whose definition is recalled in 3.1) and their products. When this happens we distinguish the copies by the corresponding coordinates. To this end we write $$\aa^1_t : = \spec k[t]$$ for the affine line with coordinate $t$, and similalry $$(\gg_m)_t : = \spec k[t, t^{-1}],$$ $$\pp^1_t = \aa^1_t\sqcup_{(\gg_m)_t}\aa^1_{t^{-1}},$$ $$\dd^1_t = (\aa^1_t)_0^\wedge.$$
When working over a base field $k$, we have multiplicative action of $k$ on the commutative groups schemes $\aa^1$ and $\dd^1$ and multiplicative action of $\zz$ on $\gg_m$. This gives meaning to the notations $$\aa^1\otimes_k V,$$ $$\dd^1\otimes_k V,$$ $$\gg_m\otimes_\zz A$$ for $V$ a $k$-vector space and $A$ an abelian group. We use the standard notation for projective spaces associated to a vector space: $$\pp(V) : = \frac{(\aa^1\otimes_k V)\setminus 0}{\gg_m}.$$

\noindent {\bf \'Etale local Zariski open.} Throughout the paper, we will be working with families $X$ over a base $S$. In all such cases, unless otherwise specified, properties and constructions having to do with $X$ will be understood \'etale locally over $S$. For example, a local \emph{Zariski open} $U\subset X$ will, in this context (and unless otherwise specified) mean an \'etale open in the total space which is a Zariski open over every geometric point of $S$.

\noindent {\bf Vector bundles and multi-line bundles.} A vector bundle $\nu/X$ is a locally free sheaf of $\oo_X$-bundles over $X$. Associated to a vector bundle is an affine space $\aa\nu: = \spec (\oo_X[\nu^*]/X)$ and a $GL_n$-torsor $\gg\nu.$ A \emph{multi-line} bundle $(\tau_1,\dots, \tau_k)$ is a collection of $k$ line bundles (equivalent to a $T$-equivariant bundle for the torus $T= \gg_m^k$). For $\boldsymbol{\tau} = (\tau_1,\dots, \tau_k)$ a multi-line bundle, define $\gg\boldsymbol{\tau} : = \gg\tau_1\times\dots\times \gg\tau_i$ (the torus torsor) and $\aa\boldsymbol{\tau}: = \aa\tau_1\times \dots \times \aa\tau_n$ (the affine bundle associated to their direct sum). 

\noindent {\bf Semigroups} are sets with multiplicative structure. {\bf Monoids} are unital semigroups. Semigroups, monoids, and groups in a category $\C$ with finite limits are objects $G$ with functorial (in the placeholder object $-$) semigroup, resp., monoid, resp., group structure on the set $\hom(-, G).$

%\noindent {\bf Orbifolds} The term "orbifold" in this paper will be understood (unless otherwise stated) in the algebro-geometric sense and will be synonymous with "Deligne-Mumford stack" (always assumed of finite type). We can think of these either as functors (in an appropriate sense) from schemes to groupoids or as locally ringed sites. We view orbifolds as a $(1, 1)$-category, so that all commutative diagrams we draw in this category will be \emph{strict} and all equivalences of morphisms canonical (unless stated otherwise). Other than this, the higher categorical technicalities will not cause us any problems and will be glossed over liberally.
\section{Families of nodal formal curves \dl{(a3)}}\label{a3}
\subsection{Formal nodal curves\dl{(a3.1)}}\label{a3.1}
We say that a topological (commutative) ring $R$ has \emph{adic} topology if there exists an ideal $I\subset R$ such that the topology on $R$ is the coarsest among those in which all elements of $I$ are topologically nilpotent. To an adic ring $R$ is associated an affine formal scheme $\spf(R),$ which is a ``formally locally ringed'' space, i.e. a topological space with a sheaf of adic topological rings which have a certain locality property. A formal scheme is a formally ringed space glued out of spaces of the form $\spf(R)$. The most important formal scheme is the \emph{formal disk}
$$\dd : = \spf(k[[x]]),$$
which can be thought of as the algebro-geometric analogue of the unit complex disk in complex geometry. The disk $\dd$ has one closed point, $0\in \dd,$ corresponding to the unique closed maximal ideal of $k[[x]]$.
\begin{defi}[\dl{a1}]\label{def:a1}
  A formal scheme $X$ is smooth of dimension $n$ if in a formal neighborhood of each $x\in X$ it is isomorphic to an $n$-dimensional formal disk, i.e. we have
  $$\hat{\oo}_x\cong k[[x_1,\dots, x_n]].$$
\end{defi}
We say that a formal variety $X$ is a singular formal curve if it is topologically noetherian and isomorphic to the formal neighborhood of a one-dimensional (possibly singular) curve in the formal neighborhood of any point $x\in X$. \emph{Smooth} formal curves (a subset of singular formal curves, by our convention in Section \ref{not-conv}) are not very interesting: namely, it is obvious that any geometrically connected smooth formal curve is either a smooth curve (no formal structure) or $\dd$ itself. Both notions extend evidently to families over a Noetherian base scheme $S.$ Things get a little more interesting once we weaken the condition of smoothness to one of nodality. Namely, define the formal cross, $$\dd^+ : = \spf\left(\frac{k[[x,y]]}{xy = 0}\right).$$
\begin{defi}[\dl{a2}]\label{def:a2}
  A formal curve $C$ is nodal if it is isomorphic to either $\dd$ or $\dd^+$ in the formal neighborhood of any point. 
\end{defi}
Classification of nodal curves turns out to reduce essentially to curves with marked points:
\begin{lm}[\dl{lm:a1}]\label{lm:a1}
  Over an algebraically closed field, any connected formal nodal curve $C$ is of one of the following three types:
  \begin{enumerate}
  \item $C = \dd$
  \item $C = \dd^+$
  \item $C = C_0\cup_{c_0, c_1, \dots, c_n} \dd\sqcup \dd\sqcup \dots\sqcup \dd,$ with $C_0$ a (possibly punctured) reduced connected nodal curve and $c_1,\dots, c_n$ distinct (unordered) smooth points on $C_0$. 
  \end{enumerate}
Here the number of copies of $\dd$ is $n$ and each is glued onto $C$ by identifying $0\in \dd$ with one of the marked points $c_i$. 
\end{lm}
The proof of the lemma is straightforward. Since formal neighborhoods are finer than \'etale neighborhoods, a formal variety over any field $K$ (of characteristic zero) is a formal nodal curve if and only if it is so after basechange to the algebraic closure $\bar{K}$. We will introduce new notation for the last class of formal curves:
\begin{defi}[\dl{a3}]\label{def:a3}
  For $C$ a (possibly punctured) reduced geometrically connected nodal curve with a divisor $c_1\sqcup c_2\sqcup \dots\sqcup c_n\in C$ consisting of smooth geometric points of multiplicity one (possibly permuted by the Galois group of $K$), write $$C^+_{c_0,\dots, c_n} : = C\cup_{c_0, c_1, \dots, c_n} \dd\sqcup \dd\sqcup \dots\sqcup \dd.$$
\end{defi}
The curves $\dd, \dd^+$, which are not of the form $C^+_{c_0,\dots, c_n}$, will be called \emph{strictly formal} nodal curves. We call an irreducible component of a formal curve $C: = C^+_{c_0,\dots, c_n}$ as above \emph{global} if it is an irreducible component of $C_0$, and \emph{formal} if it is one of the glued copies of $\dd$ or if $C$ is strictly formal. Given any formal curve $C$ over $k$, we define its reduced locus, $C_0$, to be the union of its global components, and define its strictly formal piece to be the union of all strictly formal components (in general, a geometrically disconnected formal curve). 

We say that a formal nodal curve $C$ over $k$ is \emph{proper} if it is either strictly formal or is isomorphic to $C^+_{c_0,\dots, c_n}$ for $C$ a \emph{proper} reduced nodal curve. The intuition to keep in mind is that the formal unit disk $\dd$ is similar to the ``closed'' unit disk; this will become clear later on as we consider its boundary. 

\subsection{Families of nodal formal curves \dl{(a3.2)}}\label{sec:a3.2}
Because of our classification lemma above, the classification of formal nodal curves gives us little more than a classification of classical nodal curves with marked points. However, their deformation theory is much more interesting. Let us set up the deformation problem. Say $S$ is a connected base scheme (not necessarily reduced), with reduced subscheme $S_0$. Recall that a formal affine scheme $U$ over $S$ is a flat sheaf of topological rings $\oo_U$ over $S$ with adic topology (locally) defined by some sheaf of ideals $I_U\subset \oo_U$. It is pro-proper if it is (after restriction to the formal neighborhood of any geometric point) a formal thickening of a proper scheme.
\begin{defi}[\dl{a4}]\label{def:a4}
  Let $S$ be a (possibly non-reduced) Noetherian scheme with reduced stratum $S_0\subset S$. A proper formal nodal curve $X/S$ is a (flat) family of pro-formal varieties $X/S$ such that for any $x\in X$ mapping to $s\in S$, we have either $\oo_x^\wedge \cong \oo_s^\wedge[[x]]$ (i.e. $X$ is smooth one-dimensional relative to $S$) or $\oo_x^\wedge \cong \oo_S[[x,y]]/xy = t$, for $t$ some nilpotent element of $\oo_s^\wedge.$ 
\end{defi}
The definition is equivalent to requiring that $X$ near $x$ is locally isomorphic to a formal neighborhood of the node of the reduced fiber in a (global) nodal curve over $S$. 
\begin{defi}[\dl{a5}]\label{def:a5}
We define a formal nodal curve $X/S$ to be the complement to a (relative) Cartier divisor $D\subset \bar{X}^s$ inside the smooth locus of a proper formal curve $\bar{X}$ with at worst nodal singularities. 
\end{defi}

Note that while we make the definition for a finite-type base $S$, it extends immediately to a base which is a formal scheme, by defining a formal nodal curve over a formal scheme $S$ to be a compatible collection of formal nodal curves over each finite type subscheme $S'\subset S$. 

The key observation is that the ``cross'' formal scheme $\dd^+$ (the nodal union of two disks) has the following (versal) deformation over the base $$\dd^{(e)}_\epsilon: = \spf (k[\epsilon]/\epsilon^{e+1}):$$
write $$\dd^+_\epsilon : = \frac{k[[x, y]]}{xy = \epsilon}.$$
These combine to a deformation over the formal disk $\dd_\epsilon$, with the remarkable property that the fiber over the generic point of $\dd_\epsilon$ is smooth. The geometric picture one should keep in mind for this generic fiber is as a closed complex annulus of modulus $-\log(\epsilon)$ (for $\epsilon$ some small complex number). 

\section{Framing, co-framing, and moduli \dl{(a4)}}\label{sec:a4}
\subsection{Embeddings of the punctured formal disk \dl{(a4.1)}}\label{sec:a4.1}
Let $k((t))$ be the field of Laurent series in $t$. We write informally $\ddo: = \spec_{top} k((t)),$ viewed as an object of the dual category of topological rings (note that as the topology on $k((t))$ is not adic, this is not a formal scheme). For $R$ an adic ring, define $$\hom(\ddo, \spf(R)): = \hom_{top}(R, k((t))),$$ a Hom between topological rings. This is a functor a priori defined on affine formal schemes, but from $k((t))$ being a field it follows that $\hom(\ddo, -)$ is a cosheaf in the Zariski topology, hence it makes sense to define $\hom(\ddo, X)$ for $X$ any Noetherian formal scheme. Similarly, for $S$ a (finite type) base scheme, we define $\ddo_S: = \ddo\times S,$ which we view as an object of the dual category of sheaves of topological quasicoherent rings over $S$, the ``spectrum object'' associated to the sheaf of topological coherent rings $k((t))_S$ over $S$. Given another sheaf of topological quasicoherent sheaf $\R/S$, the functor $U\mapsto \hom_U(\R_U, k((t))_S)$ defines a presheaf of sets over $S$ in the \'etale topology. Denote its sheafification by $\homu_S(\R, k((t)))$. This is an \'etale sheaf of sets. The functor $\spf(\R)\mapsto \homu_S(\R, k((t)))$ is (\'etale locally on $S$) a cosheaf in the Zariski topology on $\R$; hence, this functor on affine schemes over $S$ extends to a functor $$\homu_S(\ddo, -): \hat{\t{Sch}}_{Noeth}/S\to \t{Set}/S_{\et}$$ from Noetherian formal schemes to \'etale sets over $S$. 

Note that the fact that $k((t))$ is a topological field implies that the functor $R\mapsto \hom(R, k((t)))$ is a cosheaf not just for the Zariski topology on formal schemes, but more generally on topological rings. Specifically, we have the following evident lemma.
\begin{clm}
  Suppose that $R$ is a topological ring. Let $f, g\in R$ be two elements. Let $R_f, R_g, R_{fg}$ be the completed localizations of $R$ with respect to the three elements $f, g, fg$. 
Then the diagram of hom sets of topological rings $$\hom(R_{fg}, k((t)))\rightrightarrows \hom(R_f, k((t)))\sqcup \hom(R_g, k((t)))\to \hom(R, k((t)))$$ is an equalizer diagram. If $\R$ is a coherent sheaf of topological rings over a base $S$ with global sections $f,g$ the corresponding fibered diagram gives an equalizer diagram of \'etale sheaves over $S$. 
\end{clm}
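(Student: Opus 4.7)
My plan is to reduce the claim to an elementary analysis of continuous ring homomorphisms $R \to k((t))$, exploiting that $k((t))$ is a topological field. I read the displayed diagram as a coequalizer presentation: $\hom(R, k((t)))$ should be the quotient of $\hom(R_f, k((t))) \sqcup \hom(R_g, k((t)))$ by the relation identifying pairs of lifts that come from a common element of $\hom(R_{fg}, k((t)))$. For this quotient to literally recover $\hom(R, k((t)))$ the opens $\spf(R_f)$ and $\spf(R_g)$ must cover $\spf(R)$, which, in the adic setting, is equivalent to the ideal $(f,g)$ being all of $R$. I will take this as an implicit hypothesis, since otherwise any $\phi$ factoring through $R/(f,g)$ would sit in $\hom(R, k((t)))$ but lift to neither side.

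The first step is a factorization lemma. For any continuous $\phi: R \to k((t))$ the element $\phi(f) \in k((t))$ is either zero or a topological unit, since $k((t))$ is a field. In the nonzero case $\phi$ factors uniquely through the completed localization $R_f$: factorization through the algebraic localization $R[1/f]$ comes from its universal property, and this factorization extends uniquely to $R_f$ because $k((t))$ is complete, so any continuous map from an adic ring to a complete topological target extends uniquely along its completion map. The analogous statements hold for $g$ and $fg$.

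The coequalizer property then follows by direct case analysis. The hypothesis $(f,g) = R$ forces at least one of $\phi(f), \phi(g)$ to be nonzero, so every $\phi$ lifts to $\hom(R_f, k((t))) \sqcup \hom(R_g, k((t)))$. Two such lifts represent the same $\phi$ exactly when both $\phi(f)$ and $\phi(g)$ are nonzero, in which case they share a common refinement through $R_{fg}$, namely the composition of $\phi$ with the canonical map $R \to R_{fg}$. This is precisely the coequalizer relation. For the fibered version over a base $S$ the argument sheafifies: \'etale sheafification commutes with finite colimits, so it suffices to check the pointwise coequalizer of presheaves $U \mapsto \hom_U(\R_U, k((t))_S)$, which on geometric stalks reduces to the affine case already handled.

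The main point requiring care is continuity across completed localizations. The topology on $R_f$ combines inverting $f$ with an adic completion, so one must verify that the algebraically unique factorization of a continuous $\phi$ is itself continuous for this combined topology. Completeness of $k((t))$ takes care of this, and it is precisely this property, together with $k((t))$ being a field, that drives the entire argument; both would fail if one replaced $k((t))$ by, say, $k[t,t^{-1}]$ with its discrete topology, and understanding where the argument uses each feature will guide the write-up.
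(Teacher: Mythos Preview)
The paper gives no proof, calling the lemma evident. Your reading of the diagram as a coequalizer and your identification of the implicit covering hypothesis $(f,g)=R$ are both right and necessary.

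The genuine gap is in your factorization step. You claim that $\phi(f)\neq 0$ forces $\phi$ to factor through the completed localization $R_f$, by passing to $R[1/f]$ and then extending via completeness of $k((t))$. But that extension requires the intermediate map $R[1/f]\to k((t))$ to be continuous for the $I$-adic topology, which you assert without checking --- and it can fail. Take $R=k[T][[x]]$ with ideal of definition $I=(x)$, set $f=T$, $g=T-1$ (so $(f,g)=R$), and let $\phi$ send both $x$ and $T$ to $t$. Then $\phi(f)=t\neq 0$, yet the element $\sum_{n\ge 0}T^{-2n}x^n$ of $R_f=k[T,T^{-1}][[x]]$ would have to map to $\sum_n t^{-n}$, which diverges in $k((t))$. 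So $\phi$ admits no continuous extension to $R_f$. The coequalizer property still holds in this example, because $\phi$ does factor through $R_g$ (here $\phi(g)=t-1$ is already a unit in $k[[t]]$, so inverting it creates no unboundedness), but your argument as written does not establish this.

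What actually rescues the claim is a dichotomy forced by continuity together with the valuation on $k((t))$. If $\phi$ annihilates $I$, then it factors through the discrete ring $R/I$ and your argument applies verbatim there (no completion issues). Otherwise one checks that $\phi(R)$ must be bounded below in $t$-adic valuation (else multiplying an element of negative valuation against a nonzero $\phi(i)$, $i\in I$, produces elements of $\phi(I^n)$ of arbitrarily negative valuation, contradicting continuity), and a subring of $k((t))$ bounded below already lies in $k[[t]]$. In that case $\phi$ underlies an honest map of formal schemes $\dd\to\spf R$, whose closed point lands in one of $D(\bar f)$, $D(\bar g)$ by the covering hypothesis, and factoring through the corresponding open is then immediate. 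In either branch the correct factorization criterion is strictly stronger than $\phi(f)\neq 0$.
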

\subsection{Framings and co-framings \dl{(a4.2)}}\label{sec:a4.2}
If $C$ is a one-dimensional formal variety, then a map $\ddo\to C$ can behave like an open embedding. In fact there are two different ways in which this can happen, both of which will be important to us. Suppose $S$ is an affine formal curve over $C$ which is flat with at most nodal singularities. 
\begin{defi}[\dl{a6}]\label{def:a6}
  A map $\phi:\ddo_S\to C_S$ is a \emph{framing} if the sheaf of rings on $C_S$ defined by $\phi$ is a localization of the sheaf of topological rings $\oo_C$.
\end{defi}
This can be checked on the level of geometric points, where the property of being a framing is equivalent to the map $\ddo\to C$ factoring through the natural map $\ddo\to \dd$, via a parametrisation $\dd\to X$ of a formal irreducible component of $C$. So a framing is locally modeled on the embedding $\ddo\to \dd,$ up to deformation.

A coframing is locally modeled on an extension of $\ddo$ in the other direction, namely, on the embedding of the punctured  disk as the neighborhood of a puncture of a \emph{global} component of a non-proper curve $X$.
\begin{defi}[\dl{a7}]\label{def:a7}
  A map $\phi:\ddo_S\to C_S$ is a \emph{coframing} if there is a separable partial compactification $\bar{C} : = C\sqcup x$, smooth near $x$, such that the map $\ddo\to C$ extends to an embedding $\dd\to \bar{C}$ sending $0\in \dd$ to $x.$ 
\end{defi}
This is equivalent to the map of rings $\oo(C)\to \oo(\ddo)$, on a suitable Zariski neighborhood, being the completion with respect to some adic topology.

It will be useful for us to have certain standard framing and coframing maps. 
\begin{defi}[\dl{a8}]\label{def:a8}
Let $\dd_t= \mathrm{Spf}k[[t]]$ be the formal disk. We write $\phi:\ddo\to \dd_t$ its \emph{standard framing} given on the level of functions by the function $$\phi^*:k[[t]]\to k((u)), \quad t\mapsto u.$$
\end{defi}
\begin{defi}\label{dist-coframings}[\dl{a9}]\label{def:a9}
Let $(\gg_m)_t$ be the one-dimensional torus. We define two coframings,  $\psi_0, \psi_\infty:\ddo\to \gg_m,$ dual to the homorphisms 
\begin{align}
\psi_0^*:k[t,t^{-1}]\to k((u)),\quad& t\mapsto u\quad&\t{and}\\
\psi_\infty^*:k[t,t^{-1}]\to k((u)), \quad& t\mapsto u^{-1}.&
\end{align}
These identify $\ddo$ with punctured neighborhoods of the boundary points $0,\infty\in \pp^1_t$ in the compactification, respectively.
\end{defi}

\section{Stable and unstable nodal framed formal curves \dl{(a5)}}\label{sec:a5}
Here we introduce (stable, marked) nodal framed formal curves, which will be our main geometric object of study. First we introduce the unstable version. Recall that as before, unstable means ``at worst unstable'', nodal means ``at worst nodal'', etc.
\begin{defi}[\dl{a10}]\label{def:a10}
Let $S$ be a base (of finite type over a characteristic-zero field $k$). Let $I$ be an \'etable index set over $S$. We say that a formal curve over $S$ with framing by $I$ is an unstable nodal \emph{framed formal curve} if it is ind-proper (i.e. all global components are closed), and if, over any geometric point $s$, every formal component is framed by a (necessarily unique) framing $\phi_i$ for some $i\in I_s$. Similarly, we say that a ($n$-times) marked framed formal curve over $S$ with framing by $I$ is a framed formal curve with $n$ distinct marked points disjoint from all nodes (including ones with formal components). 
\end{defi}
Usually, our indexing set will be a collection of distinct points, $\{1, \dots, f\}$ (for $f$ the number of framings). When this is the case, we will write framed formal curves as tuples $(X; \phi_1,\dots, \phi_f)$, where it is understood that $X$ is nodal, $\phi_i$ are framings, and both $X$ and the $\phi_i$ might be defined over a base $S$. It will be convenient for us to allow an additional piece of data, namely, a collection of smooth marked sections $x_1, \dots, x_n \in X$. Write $$\bar{\ffc}_{g,n,f}^{unst}(S)$$ for the groupoid of framed formal curves $(X; \phi_i; x_j)$ over $S$ with $f$ framings and $n$ smooth marked points. Then $\bar{\ffc}_{g,n,f}^{unst}$ is a functor from schemes to groupoids. 
\begin{lm}[\dl{lm:a2}]\label{lm:a2}
The functor $\bar{\ffc}_{g,n,I}^{unst}(S)$ is represented by a stack in the fppf topology. 
\end{lm}
By our convention about the meaning of ``stack'' in Section \ref{not-conv}, it suffices to check that the set of framed formal curves over $S$ satisfies \'etale descent. This is clear.

\subsection{Stability \dl{(a5.1)}}\label{a5.1} Let $(X; \phi_i; x_j)\in \bar{\ffc}_{g,n,f}$ be an (at worst) unstable framed formal curve over $S$. Here and later we will frequently suppress the $\phi_i, x_j$ from the notation and write ``$X$'' to mean the entire triple $(X; \phi_i; x_j)$. Now we say that $X$ is \emph{stable} if over every geometric point $s\in S$, every global irreducible component  of $X$ has at least one marked point or node if it has genus one and at least three if it has genus zero: equivalently, if the data $(X_s; \phi_i(s), x_j(s))$ has no infinitesimal automorphisms. Note that we need impose no conditions on formal components since they can have no automorphisms which preserve framing. As this condition is only checked on geometric points, it defines a substack of $\bar{\ffc}_{g,n,f}^{unst}$ (which we will see is an open substack).

\section{Gluing \dl{(a6)}}\label{sec:a6}
\subsection{Asymmetric gluing \dl{(a6.1)}}\label{sec:a6.1}

Say $S_0$ is a \emph{reduced} base. Say $X, \phi$ is a formal curve over $S_0$ with a single framing and $Y, \psi$ a formal curve with a single coframing. To the coframing $\psi$ there corresponds a partial compactification $Y^+$ which is the unique curve formed by adding a single smooth point to $Y$ in such a way that $\psi$ extends to a map from the (unpunctured) formal disk. On the other hand, in the case of the framed curve $X$ over a reduced base, $\phi$ already extends to a map $\bar{\phi}:\dd\to X$ which is an isomorphism with a proper component of $X$. Let $X^-$ be the curve obtained by removing the formal component but leaving the attaching point (equivalently, replacing $X$ by its reduced locus in a sufficiently small Zariski neighborhood of $\phi$). Let $x_0, y_0$ be the points of $X^+, Y^-$ at which the structure was changed. In this special context, we define the asymmetric gluing $$X\overset{\circ}{\cup}_{\phi\sim \psi} Y$$ to be the nodal curve $X^+\cup_{x_0\sim y_0} Y^-.$ Now over a general base $S$, we define the asymmetric gluing $X\cup_{\phi\sim \psi}Y$ to be a thickening of $X_{S^0}\overset{\circ}{\cup}_{\psi\sim\phi} Y_{S_0}$ (gluing of restrictions over $S^0$), defined locally (in the Zariski topology of $X_{S^0}, Y_{S^0}$, which, recall, consists of \'etale opens which are Zariski over every geometric point of $S$) by the following formula.
\begin{defi}[\dl{a11}]\label{def:a11}
  Let $(X, \phi)$ be an \emph{affine} formal curve with a framing and $(Y, \psi)$ an \emph{affine} nodal formal curve with a coframing, all flat over an affine base $S = \spec(R)$. Then we define the \emph{asymmetric} gluing $X\cup_{\phi\sim \psi} Y$ to be the variety $\spf(\oo_X\times_{R((x))} \oo_Y)$.
\end{defi}
  This definition is evidently local: i.e., replacing $X, Y$ by \'etale neighborhoods through which the framing, respectively, coframing, factors will produce an \'etale neighborhood of the gluing. Thus it patches well on the Zariski topology on $X_{S^0}\cup_{\phi\sim\psi} Y_{S^0}.$

Geometrically and if $S$ is a point, this corresponds to replacing the framing disk $\bar{\phi}$ by the partial closure of $Y$ in a neighborhood of $\psi$, thus creating a node; over general $S$, the framing-coframing pair precisely gives enough data to determine a deformation of this nodal curve. This curve inherits a map from $\ddo,$ but this is now neither a framing nor a coframing, but rather lands in the interior of the global component of the glued curve.

By locality we can extend this definition to the case where $X, Y, S$ are not necessarily affine. Similarly, if $X$ is a variety with framing and coframing, $(X, \phi, \psi)$, then we have a notion of self-gluing $X_{\phi\sim \phi}$ given \'etale locally by $X\cup_{\phi\sim \psi} X$ and then identifying the two copies of $X$ at the complement of the gluing point. More generally, given any possibly disconnected nodal curve $X$ over a base $S$, together with a framing-coframing pair given by the \'etale sets $(\t{\'I}, \t{\'O})$, together with an isomorphism of \'etale sets over $S$, which we write $\t{\'I}\sim \t{\'O}$, we can define (by applying the above construction \'etale locally) a new self-glued formal curve $X_{\t{\'I}\sim \t{\'O}}$. Any disjoint framings and coframings will be preserved by this procedure. In this paper, we will only consider discrete \'etale sets over a base, namely $\t{\'I}=\t{\'O} = \{1,\dots, n\}.$

\subsection{Symmetric gluing \dl{(a6.2)}}\label{sec:a6.2}
In the sequel, we will mostly consider \emph{symmetric} gluing, where both the input and the output curves are framed rather than coframed. This will be accomplished by interpolating between coframings using the distinguished coframings $\psi_0, \psi_\infty$ on $\gg_m$ introduced in Definition \ref{dist-coframings}. 

\begin{defi}[\dl{a12}]\label{def:a12}
Given a pair of curves $X, X'$ both with framings $\phi, \phi'$, we define the ``symmetric gluing'' $$\glue_{\phi\sim \phi'}(X, X') : = \big(X\sqcup \gg_m \sqcup X'\big)_{\phi\sim \psi_0, \phi'\sim \psi_\infty}.$$
\end{defi}

If $S$ is a point and $X_0, X'_0$ are the reduced loci, obtained from $X, X'$, respectively by removing the formal components corresponding to $\phi, \phi'$, respectively (but keeping the attaching points), and $x\in X_0, x'\in X'_0$ are the attaching points, then $\glue_{\phi\sim \phi'}(X, X')$ looks like the nodal curve obtained by gluing a copy of $\pp^1$ in between $X_0, X'_0$, with $0\in \pp^1$ identified with $x$ and $\infty\in \pp^1$ identified with $x'$. Over a non-reduced scheme, the result of symmetric gluing will be a deformation of this nodal curve. Note that once we introduce a stability condition into this picture in the next section, the intermediate $\pp^1$ component will get ``blown down'' and the gluing identified with $X_0\sqcup_{x\sim x'} X'_0$ on the reduced locus. 

\section{Moduli \dl{(a7)}}\label{sec:a7}
The moduli stack $\bar{\ffc}_{g,n,f}^{unst}$ and its stable substack $\bar{\ffc}_{g,n,f}$, which will be studied in the next section, is much larger than the corresponding Deligne-Mumford moduli stack, but surprisingly is not more singular. Indeed, it can be locally (in an \'etale sense) modeled on a Deligne-Mumford stack multiplied by an infinite-dimensional formal disk, a statement which will be made precise in Theorem \ref{thm:torsor-str}. We will prove this remarkable fact by using asymmetric gluing to turn a formal curve into a global curve with some additional structure. In the next two subsections we give a taste, in genus zero, of what kind of formal parameters the additional structure introduces (essentially, it consists of formal deformations of an embedding of an open genus zero curve into a nodal one). In Section \ref{sec:hourglass-gen}, we write down a new model for framed formal curves of all genera in terms of something we call \emph{hourglass spaces}, $\Hour_{g,n,f}$. The formalism of hourglass spaces will also be useful in the following sections.

\subsection{Automorphisms of $\aa^1$ which fix two points \dl{(a7.1)}}\label{sec:a7.1}
Given any curve $X$ over an algebraically closed base  $k$ with marked points $x_1, \dots, x_n$ we define the \emph{automorphism stack} $\aut(X; x_i)$ of $(X; x_1,\dots, x_k)$ to be the stack (in our sense, see Section \ref{not-conv}) whose value on a base $S$ classifies fiberwise automorphisms of the constant family $S\times X$ over $S$ which fix all $x_i.$ Note that since this classification problem has no automorphisms, this is a set-valued (rather than a groupoid-valued) functor, and $\aut(X; x_i)$ is a stack only insofar as it might not be representable by a finite-type scheme. And since its value on any scheme is a group (in a functorial sense), it is a group object. Indeed, if $X$ is a proper curve, $\aut(X; x_i)$ is a reduced algebraic group of finite type (in fact a finite group if the marked curve $(X; x_i)$ is stable). But if $X$ has punctures, then this group is non-reduced, and while its reduced component has finite type there are infinitely many nilpotent directions. We will need to work with only two such groups, in a sense the simplest nontrivial ones. In the next section we will consider automorphisms of $\gg_m$ (without  marked points), and in this section we conside the group $$\A_{0,1} : = \aut(\aa^1; 0, 1).$$ Evidently, $\A_{0,1}$ has one reduced point. It will be convenient to understand coordinates on this group. Namely, let $M_{\aa^1}$ be the monoid of all polynomials $f\in k[x]$ under composition of polynomials (viewed as an ind-scheme, a limit of finite-dimensional affine schemes corresponding to finite-dimensional subspaces of $k[x]$). Let $M_{0,1}$ be the sub-monoid of polynomials which fix $0, 1\in \aa^1,$ which can be written $\{x + x(x-1)f(x)\mid f(x)\in k[x]\},$ once again a monoid under composition. This is a smooth infinite-dimensional affine variety. We have the following straightforward result.
\begin{lm}[\dl{a3}]\label{lm:a3}
The group scheme $\A_{0,1}$ is the formal neighborhood of the identity function $x$ in $M_{0,1}.$
\end{lm}
Note that $\A_{0,1}$ is a group ind-scheme with a single point, which means that it is a \emph{formal group} and, over a base of characteristic zero, it is uniquely determined (via the formal exponential map) by its Lie algebra. This Lie algebra is canonically identified with $T_{0,1}(\aa^1),$ the Lie subalgebra of the Lie algebra of vector fields on $\aa^1$ which vanish on the divisor $0\cup 1$. Though we will not need it, this point of view generalizes to more general open marked curves $(X; x_i)$ which have no automorphisms over a point.

\subsection{Framed Formal Nodal Annuli \dl{(a7.2)}}\label{sec:a7.2}\label{sec:annuli}
Define a (framed, formal) nodal annulus over $S$ to be a framed formal curve $(X; \phi_1,\phi_2)$ classified by a map $S\to  \bar{\ffc}_{0,0,2}$, i.e.\ a stable genus zero curve with no marked points and two boundary components. Note that stability is equivalent to $X$ having no global components, i.e. being a deformation of the formal cross $\dd_+$. The moduli space of annuli is a good testing ground for understanding classification and gluing of framed formal curves: in a sense, all of the geometry of the spaces $\bar{\ffc}_{g,n,f}$ is locally an amalgamation of the geometry of Deligne-Mumford spaces and the geometry of our moduli space of annuli (which will be endowed with the structure of a semigroup under stable gluing in the next section). 

Recall the notation $\A_{0,1} : = \aut(\aa^1, 0, 1).$
\begin{thm}[\dl{a4}]\label{thm:a4}
The space of (framed, formal) nodal annuli is isomorphic to the product $\dd\times \A_{0,1}^2.$ Further, it is possible to choose an identification in such a way that strictly formal nodal annuli lie over $\{0\}\times \A_{0,1}^2\subset \dd\times \A_{0,1}^2.$
\end{thm}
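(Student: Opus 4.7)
The plan is to construct an explicit isomorphism $\Psi : \dd \times \A_{0,1}^2 \xrightarrow{\sim} \bar{\ffc}_{0,0,2}$ that carries $\{0\} \times \A_{0,1}^2$ onto the locus of strictly formal annuli. The first step is classification over a geometric point: stability forbids any global component (a $\pp^1$-component could carry at most two special points, short of the three required), so the only possibility is the formal cross $\dd^+$, with its two formal branches framed by $\phi_1, \phi_2$. The triple $(\dd^+, \phi_1, \phi_2)$ has trivial automorphism group (the framings rigidify each branch, as one sees by a short continuous-derivations calculation inside $k[[x,y]]/(xy)$), so set-theoretically $\bar{\ffc}_{0,0,2}$ consists of a single point, matching the underlying set of $\dd \times \A_{0,1}^2$.

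I construct $\Psi$ in two factors. The $\dd$ factor comes from the canonical versal smoothing of the node,
\[
\X_\epsilon = \spf\bigl(k[[\epsilon, x, y]]/(xy - \epsilon)\bigr),
\qquad
\phi_1^{\mathrm{std}} \colon x \mapsto u_1,\ y \mapsto \epsilon\, u_1^{-1},
\qquad
\phi_2^{\mathrm{std}} \colon y \mapsto u_2,\ x \mapsto \epsilon\, u_2^{-1}.
\]
The two $\A_{0,1}$ factors parametrize deformations of each standard framing. Concretely, asymmetrically gluing $\X_\epsilon$ at the framing $\phi_i^{\mathrm{std}}$ with a copy of $\gg_m$ along its coframing $\psi_0$ extends the $i$-th branch to a copy of $\aa^1$, with distinguished marked points $0$ (the attached node) and $1$ (the coframing base). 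The group $\A_{0,1} = \aut(\aa^1; 0, 1)$ acts on each such extension, and inverting the gluing realizes this action as a modification of the framing, producing an $\A_{0,1}^2$-family of framed formal annuli parametrized by $\dd_\epsilon$.

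To verify $\Psi$ is an isomorphism, since both source and target are smooth formal stacks with a single closed point, it suffices to check that $\Psi$ induces an isomorphism of tangent spaces. A direct deformation-theoretic computation at the standard framed cross shows that tangent vectors consist of (a) a node-smoothing parameter $\delta \in k$, plus (b) perturbations $a_i(u_i) \in k((u_i))$ of the two framings, modulo (c) the infinitesimal gauge of $\aut(\dd^+)$, which has Lie algebra $xk[[x]]\partial_x \oplus y k[[y]]\partial_y$ and acts on each $a_i$ by shifting by an element of $u_i k[[u_i]]$. The resulting quotient $k \oplus (u_1^{-1} k[u_1^{-1}]) \oplus (u_2^{-1} k[u_2^{-1}])$ has the same shape as the tangent space $k \oplus (x(x-1)k[x]\partial_x)^{\oplus 2}$ of $\dd \times \A_{0,1}^2$, and the iso is induced by the construction.

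The main obstacle will be checking that the abstract tangent-space isomorphism described above is in fact the one induced by $\Psi$, not merely an isomorphism between spaces of the same (countably infinite) dimension. The cleanest route is to handle the strictly formal case $\epsilon = 0$ first---where the smoothing factor is trivial and the identification reduces to pairing polynomials vanishing at $0, 1$ with ``polar tails'' in $u^{-1}k[u^{-1}]$ via the asymmetric-gluing picture---and then to spread the product structure across all of $\dd_\epsilon$ using transversality of the smoothing direction to the framing directions. The final assertion about strict formality is immediate, as strict formality corresponds precisely to $\epsilon = 0$ in the universal family $\X_\epsilon$.
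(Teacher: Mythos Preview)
Your approach is essentially correct and shares the key geometric ingredient with the paper---asymmetric gluing of a copy of $(\aa^1;0,1)$ at each framing and the resulting $\A_{0,1}$-action---but the paper organizes the argument differently in a way that avoids the tangent-space computation you flag as the main obstacle. Rather than building a family over $\dd\times\A_{0,1}^2$ and checking universality infinitesimally, the paper defines an auxiliary moduli stack $\Hour_{0,0,2}$ classifying tuples $(X;0,1,0',1';\alpha,\alpha')$ with $X\in\bar{\M}_{0,4}$ and $\alpha,\alpha':\aa^1\hookrightarrow X$ disjoint open embeddings hitting the marked points, then shows directly that the ``hourglass'' map $\hourparam:\bar{\ffc}_{0,0,2}\to\Hour_{0,0,2}$ is a bijection of functors by writing down an explicit inverse (pass to the formal neighborhood of the node, and frame it via the coframings $\psi\circ\alpha,\psi'\circ\alpha'$). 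On $\Hour_{0,0,2}$ the $\A_{0,1}^2$-torsor structure over the forgetful map to $\bar{\M}_{0,4}$ is manifest, the image is the formal neighborhood $\hat\delta\cong\dd$ of a single boundary point, and an explicit section is produced from the family $xy=t$ in $\pp^1\times\pp^1$. This sidesteps entirely the issue of matching an abstract tangent-space isomorphism to $d\Psi$. Your route works, but the paper's hourglass trick turns the problem into one about global marked curves where everything is visible by inspection; it also generalizes immediately to all $(g,n,f)$, which is Theorem~\ref{thm:a6}.
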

\begin{proof}
We prove this using the asymmetric gluing operation. Namely, let $\aa^1_t = k[t]$ be the affine line co-framed  by the map $\psi:\ddo\to \aa^1$ with $\psi^*: t\mapsto u^{-1}$ (coframing a neighborhood of $\infty$). Similarly, let $\aa^1_{t'} = k[t']$ be another copy of $\aa^1,$ again endowed with the same coframing $\psi'$ with $(\psi')^*:t'\mapsto u^{-1}$. Now for $(A, \phi_1, \phi_2)$ a framed nodal annulus over some base $S$, let $\hour(A, \phi_1, \phi_2)$ be the glued (unstable) closed variety of genus zero defined as the gluing $\hour(A, \phi_1, \phi_2) : = \aa^1\cup_{\psi_1\sim \phi_1} A \cup _{\phi_2\sim \psi_2}\aa^1$, so called because it looks like an ``hourglass'' deforming the ``pinched'' hourglass $\hour(\A_0) = \pp^1\cup_{\infty\sim 0} \pp^1$ with neck deformed to $A$. 

Now we endow the unstable genus zero curve $\hour(A)$ with some additional structure. Namely, first, we mark the points $(t=0, t=1, t'=0, t'=1)$ (which we abbreviate as $\{0, 1, 0', 1'\}\subset \hour(A)$) to produce a (now stable!) genus zero curve with four marked points. Second, we keep track of the two maps $\alpha:\aa^1\to \hour(A);\quad \alpha':\aa^1\to \hour(A)$ given by the glued copies of $\aa^1$. This gives a map $\hourparam$ from the moduli space $\baran$ to the stack classifying triples (over some base $S$): $$\left((X; 0,1,0', 1')\in \bar{\M}_{0, 4};\quad \alpha, \alpha': \aa^1:\to X\right),$$ with the following conditions:
\begin{enumerate}
\item $\alpha:\aa^1\to X$ and $\alpha':\aa^1\to X$ are nonintersecting open embeddings (of course, this can only happen when X is a deformation of the nodal genus zero curve with four marked points), 
\item $\alpha(0) = 0, \alpha(1) = 1, \alpha'(0) = 0', \alpha'(1) = 1'.$
\end{enumerate}
\begin{defi}[\dl{a13}]\label{defi:a13} Write $\Hour_{0,0,2}$ for the stack classifying the data above\footnote{Note that the marked points $0, 1, 0', 1'$ are a red herring: they are uniquely determined by the maps $\alpha, \alpha'$ together with the conditions $\alpha(0) = 0,$ etc. The reason for carrying this redundant structure around is to ensure stability of the underlying marked curve which will make our life easier by preventing the neccessity of working with large stabilizers.}. \end{defi}
The fact that this moduli functor satisfies fppf descent is obvious. We will soon see that this stack is an ind-scheme. 

Note that the map $\hourparam:\bar{\ffc}_{0,0,2}\to \Hour_{0,0,2}$ is a bijection: namely, if $(X; \{0,1,0', 1'\}; \alpha, \alpha')$ is a triple over $S$, then over the reduced locus of $S$ the space $X$ is a genus zero curve with a single node; call this point $\infty_0\subset X$ (a section of the restriction of $X$ to $S_0$). Now the formal neighborhood $\hat{\infty}$ of $\infty_0$ in $X$ is an unframed annulus over $S$ (it now has sections over all of $S$,  not just $S_0$). Now the maps $\psi, \psi': \ddo\to X$ factor (uniquely) through $\hat{\infty}$, hence the triple $(\hat{\infty}, \psi\circ\alpha, \psi'\circ\alpha')$ is a framed nodal annulus! It is easy to see that this construction is inverse to $\hourparam:\bar{\ffc}_{0,0,2}(S)\to \Hour_{0,0,2}(S)$, and so the stacks $\bar{\ffc}_{0,0,2}$ and $\Hour_{0,0,2}$ represent the same functor (in $S$) and are canonically isomorphic.

Now note that the group $\A_{0,1}^2$ acts on $\Hour_{0,0,2}$ in an obvious way: namely, given any section $(\gamma_1, \gamma_2): S\to \A_{0,1}^2$ and an element $(X; \alpha, \alpha'; 0, 1, 0', 1')\in \Hour_{0,0,2},$ we can precompose $\alpha, \alpha'$ with $\gamma', \gamma'$ respectively twisting $\gamma, \gamma'$ but leaving invariant the marked curve $(X; 0,1,0',1')\in \bar{\M}_{0,2}(S)$. Now it is obvious that $\A_{0,1}^2$ acts without stabilizers and that the quotient $\Hour_{0,0,2}\to\Hour_{0,0,2}/\A_{0,1}^2$ factors through the forgetful map $$\forg:\Hour_{0,0,2}\to \bar{\M}_{0,4}\cong \pp^1.$$ Now on any reduced scheme $\Hour_{0,0,2}$ has a single value, lying over the boundary point $\delta_0 \in \bar{\M}_{0,4}$ corresponding to the unique stable nodal curve, $(X_0; 0,1,0',1')$, of genus zero with four marked points such that $0, 1$ are in one irreducible component and $0', 1'$ are in the other. Thus the map $\forg$ has as image the formal neighborhood $\hat{\delta}$ of $\delta_0\in \pp^1$, which is isomorphic to the formal disk $\dd$. To conclude the proof of the theorem it remains to construct a section $\hat{\delta}\to \Hour_{0,0,2}.$ This we do explicitly. First, we introduce a (standard) local coordinate near $\delta$ in $\bar{\M}_{0,4}$, defined as follows: define the curve $X_t$ to be the compactification in $\pp^1\times \pp^1$ of the curve in $\aa^1\times \aa^1$  defined by $xy = t.$  Let $0,1$ be the points $(\infty,0), (1, t)$ respectively and let $0', 1'$ be the points $(0, \infty), (t, 1),$ respectively. That this is a local coordinte can be checked for example by identifying (for nonzero $t\in \aa^1$) $X_t$ with $\pp^1$ and computing the cross-products (one of which will be $t$). Now for each such curve define $H_t\in \Hour_{0,0,2}$ to be the tuple $(X_t; \alpha,\alpha'; 0,1, 0', 1')$. As we have defined $(X_t; 0,1,0',1'),$ it remains to define $\alpha, \alpha'.$ Let $\pi:X_t\to \pp^1$ be the projection to the $x$ coordinate (using $X_t\subset \pp^1\times \pp^1$) and $\pi':X_t\to \pp^1$ the  projection to the $y$ coordinate. Then in a formal neighborhood of $t=0,$ the map $\pi$ is an isomorphisms with $\aa^1$ when restricted to  the preimage $X_t\cap \aa^1_{x^{-1}}\times \pp^1_y$ of the complement $\aa^1_{x^{-1}} \cong \pp^1_x\setminus 0$ and, similarly,  $\pi'$ is an isomorphism when restricted to $X_t\cap \pp^1_x\times \aa^1_{y^{-1}}.$ The inverses to these restricted projections provide us with maps $\alpha, \alpha',$ concluding the proof of Theorem \ref{thm:a4}. 
\end{proof}
In particular, $\bar{\ffc}_{0,0,2}$ is a smooth infinite-dimensional formal manifold (in the sense of locally being isomorphic to an infinite power of $\dd$), something that will carry over (in an \'etale sense) to all $\bar{\ffc}_{g,n,f}$. The strictly nodal piece $\bar{\ffc}_{0,0,2}$ is in this case a codimension-one smooth submanifold; in general, it will be normal-crossings.

Note that each of the two actions of the group $\A_{0,1}$ is a piece of the action by the much larger group, namely the group $\aut(\ddo)$ of all automorphisms of the formal disk, acting by modifying a framing (a priori, this group has some additional profinite ``topological'' structure not captured by the classification problem: however, for a framed curve over any base this structure is cancelled out after quotienting out the stabilizer of the action). Using a slightly different subgroup gives a simpler description in coordinates of the universal family of annuli.
\begin{cor}[\dl{cor:a5}]\label{cor:a5}
Let $X_t;\phi_t, \phi_t'$ be the family over the formal disk $\dd_t$ given by $$X_t = \spf (k[[x,y]]/xy = t);\quad \phi^*(x,y) = (u, tu^{-1}), \quad (\phi')^*(x,y) = (tu^{-1}, u).$$
Let $\A_{neg}\subset \aut(\ddo)$ be the subset (not a subgroup!) consisting of automorphisms of the form $f(u) = u + e(u^{-1}),$ for $e$ a polynomial (by nilpotence constraints, over a base $S$, the polynomial $e$ must have coefficients in the nilpotent ideal  $I^{nilp}$). Then the family of framed formal nodal annuli over $\A_{neg}\times \dd_t\times \A_{neg}$ with fiber over $(\alpha, t, \alpha')$ given by $(X_t; \phi\circ\alpha; \phi'\circ \alpha')$ is a universal family of annuli: i.e. it is classified by an isomorphism $$\A_{neg}\times \dd_t\times \A_{neg}\cong \bar{\ffc}_{0,0,2}.$$
\end{cor}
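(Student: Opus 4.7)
The plan is to interpret the explicit family as a classifying morphism $\tau\colon \A_{neg}\times \dd_t\times \A_{neg}\to \bar{\ffc}_{0,0,2}$ of formal stacks, and to show $\tau$ is an isomorphism. By Theorem~\ref{thm:a4} the target is a smooth formal scheme of countably infinite dimension with a unique reduced point, and the source is manifestly of the same type. Thus by the formal inverse function theorem for such smooth formal schemes (expressed, e.g., as a filtered colimit of finite-dimensional smooth formal thickenings), it suffices to verify that $d\tau$ is an isomorphism of Zariski tangent spaces at the reduced point.

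I would compute the target tangent space directly as first-order deformations of the standard framed cross $(\dd^+;\phi_1,\phi_2)$ over $k[\epsilon]/\epsilon^2$ modulo infinitesimal isomorphisms. The curve deforms via $xy = \epsilon t$ for $t\in k$, and the two framings deform as $\phi_1^*(x) = u+\epsilon a$, $\phi_2^*(y) = u+\epsilon b$ with $a,b\in k((u))$ free (the opposite coordinate of each framing is then pinned down to first order by the equation $xy = \epsilon t$). The infinitesimal automorphisms of the deformed curve restricting to the identity on $\dd^+$ are independent rescalings of the two branches: writing $f^*(x) = x+\epsilon p$, $f^*(y) = y+\epsilon q$ with $p, q \in k[[x,y]]/(xy)$ and imposing $f^*(xy-\epsilon t)\equiv 0 \pmod{\epsilon^2}$, one checks that $p \in xk[[x]]$ and $q \in yk[[y]]$. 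Such $f$ acts on framings by $a\mapsto a+p(u)$ and $b\mapsto b+q(u)$ with $p(u), q(u)$ ranging over all of $uk[[u]]$. Hence the target tangent space is $k \oplus k((u))/uk[[u]] \oplus k((u))/uk[[u]] \cong k \oplus k[u^{-1}] \oplus k[u^{-1}]$.

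On the source side, $T_{\mathrm{id}}\A_{neg}\cong k[u^{-1}]$ directly by linearizing $u + e(u^{-1})$ in $e$, and $T_0\dd_t\cong k$, so the source and target tangent spaces match as abstract vector spaces. Linearizing $\tau$ itself is immediate: an infinitesimal $\alpha$ with $\alpha^*(u) = u+\epsilon\delta(u^{-1})$ yields $(\phi\circ\alpha)^*(x) = u+\epsilon\delta$, so the corresponding deformation parameter $a$ equals $\delta \in k[u^{-1}]\subset k((u))$, whose class in $k((u))/uk[[u]]$ is $\delta$ itself; the second $\A_{neg}$ factor contributes symmetrically via $\phi'$, and $\dd_t$ maps tautologically onto the $t$-direction. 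The main obstacle I foresee is the target tangent calculation --- in particular, carefully verifying that \emph{all} infinitesimal automorphisms of the deformed curve restricting to the identity on $\dd^+$ are branch-rescalings, so that $k((u))\oplus k((u))$ collapses to exactly $k[u^{-1}]\oplus k[u^{-1}]$ under the gauge action. Once that is in hand, $d\tau$ is the identity on each summand and the corollary follows.
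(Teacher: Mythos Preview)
Your proposal is correct and follows essentially the same route as the paper. The paper also reduces (via Hensel's lemma) to an isomorphism on tangent spaces at the reduced point, splits off the $\dd_t$ direction as transversal to the strictly nodal locus, and then handles the remaining directions by observing that $\A_{neg}\subset \aut(\ddo)$ is a set of coset representatives for $\aut(\dd)\subset \aut(\ddo)$; your explicit computation that infinitesimal automorphisms of $\dd^+$ are exactly the branch rescalings $p\in xk[[x]]$, $q\in yk[[y]]$ is precisely the Lie-algebra version of that coset statement.
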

\begin{proof}
The family is classified by some map $\iota:\A_{neg}^2\times \dd\to \bar{F}_{0,0,2}.$ As both sides are infinite-dimensional formal disks, it is sufficient (by Hensel's lemma, which still applies in this case) that $\iota$ is an isomorphism on tangent spaces. Note that further, $\iota$ maps $\A_{neg}^2 \times \{0\}_t$ to the locus of strictly nodal annuli, $\bar{F}^\nu_{0,0,2}.$ Since $\iota(\dd_t)$ is evidently transversal to the locus of strictly nodal annuli, it suffices to show that modifying the standard framed nodal annulus $$(\dd_+;\phi,\phi') : = \spf(k[[x,y]]/xy=0);\quad \phi^*(x,y) = (u,0), (\phi')^*(x,y) = (0,u)$$ by reparametrizations in $\A_{neg}^2$ classifies all annuli over $k[t]/t^2 = 0.$ Now evidently (locally), all nodal annuli are obtained by applying a reparametrization in $\aut(\ddo)^2$ to the standard framing on $\dd_+$. Such a reparametrization is isomorphic to the initial annulus iff it  lands in $\aut(\dd)^2\subset \aut(\ddo)^2,$ and the corollary follows from the observation that $\A_{neg}\subset \aut(\ddo)$ is a set of coset representatives for $\aut(\dd)\subset \aut(\ddo).$ Alternatively, one could prove this by seeing that $\A_{01}\mid \ddo$ (the restriction of automorphisms of $\aa^1$ to the neighborhood of infinity) is an orthogonal subgroup in $\aut(\ddo)$ to $\aut(\dd)$; equivalently, that locally, for $f\in \A_{0,1}(S)$ running over the set of automorphism of $\aa^1$, the principal part of $u^{-1} f(u^{-1})^{-1}$ can be $u^{-1}$ times any polynomial in $u^{-1}$, a straightforward computation. 
\end{proof}

\subsection{The hourglass space construction in higher genus \dl{(a7.3)}}\label{sec:a7.3}\label{sec:hourglass-gen}
A straightforward generalization of the hourglass construction $\hourparam$ gives us a powerful tool for understanding the local geometry of any moduli space of (nodal) framed formal curves, whether stable or unstable, captured by the following theorem. {\bf Suppose $(g,n,f)\neq (0,0,1)$.}
\begin{thm}[\dl{thm:a6}]\label{thm:torsor-str}\label{thm:a6}
Let $\iota:\bar{\M}_{g, n+f}^{unst}\to \bar{\M}_{g, n+2f}^{unst}$ be the map of moduli spaces of unstable (closed) nodal curves taking a curve $X; x_1,\dots, x_n, y_1, \dots, y_f$ by gluing a triply marked $\pp^1$ to each of the $y_j.$ This is a closed embedding of stacks; let $(\bar{\M}_{g, n+2f}^{unst})^\wedge$ be the formal neighborhood of this embedding. 
Then $\bar{\ffc}_{g,n,f}^{unst}$ is canonically a torsor over $(\bar{\M}_{g, n+2f}^{unst})^\wedge$ under action by the $f$-fold power $\A_{0,1}^f$ of the group $\A_{0,1} = \aut(\aa^1, 0,1)$; in particular, there is a canonical map $$\hour:\bar{\ffc}_{g, n, f}^{unst} \to (\bar{\M}_{g, n+2f}^{unst})^\wedge$$ with fibers isomorphic to the infinite-dimensional formal disk $\A_{0,1}^f$. Further, this map takes (the open substack of)  stable framed curves to stable marked curves and (the closed substack of) strictly unstable framed curves to strictly unstable marked curves.
\end{thm}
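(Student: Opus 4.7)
The plan is to directly generalize the hourglass-space construction from Theorem \ref{thm:a4} to arbitrary $(g,n,f)\neq (0,0,1)$: one trades the framing data of a formal curve for the data of $f$ open embeddings $\aa^1\hookrightarrow Y$ into a global unstable nodal curve, so that the remaining reparametrization ambiguity is exactly an $\A_{0,1}^f$-torsor. To this end I would introduce the hourglass stack $\Hour_{g,n,f}$ classifying tuples $(Y;\,x_1,\dots,x_n,\,0_1,1_1,\dots,0_f,1_f;\,\alpha_1,\dots,\alpha_f)$ over a base $S$, with $(Y;\,x_i,\,0_j,1_j)\in \bar{\M}_{g,n+2f}^{unst}(S)$ and each $\alpha_j:\aa^1\to Y$ an open embedding having pairwise disjoint images (also disjoint from the $x_i$'s) satisfying $\alpha_j(0)=0_j$, $\alpha_j(1)=1_j$. \'Etale (indeed fppf) descent for $\Hour_{g,n,f}$ is immediate, there is an evident forgetful map $\forg:\Hour_{g,n,f}\to \bar{\M}_{g,n+2f}^{unst}$, and the group $\A_{0,1}^f$ acts on $\Hour_{g,n,f}$ by precomposing the $\alpha_j$'s with automorphisms of $\aa^1$ fixing $0,1$.

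The hourglass map $\hour:\bar{\ffc}_{g,n,f}^{unst}\to \Hour_{g,n,f}$ is constructed by asymmetrically gluing a copy of $(\aa^1,\psi_\infty)$ (Definition \ref{dist-coframings}) to each framing $\phi_j$ of $X$, producing the embedded $\aa^1$'s that give the $\alpha_j$ together with the new marked points $\alpha_j(0),\alpha_j(1)$. Verbatim as in the annulus case, I would show $\hour$ is a bijection of functors by constructing an explicit inverse: given $(Y;\alpha_j)\in \Hour_{g,n,f}(S)$, on the reduced base $S_0$ the attaching node between each $\alpha_j$-component and the rest of $Y_0$ is a canonical section of $Y/S_0$; the formal neighborhoods of these sections in $Y/S$ are (deformations of) nodal formal annuli, equipped with framings coming from the coframings-at-infinity of the $\alpha_j$, and deleting these formal neighborhoods from $Y$ recovers the global components of the reconstructed framed formal curve. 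That the two constructions are mutually inverse is a Zariski-local assertion reducing to the ring-theoretic fiber-product formula of Definition \ref{def:a11}.

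Given this bijection, the torsor claim reduces to analyzing $\forg$. Freeness of the $\A_{0,1}^f$-action is immediate, and on reduced loci the image of $\forg$ is exactly $\iota(\bar{\M}_{g,n+f}^{unst})$ by the evident correspondence between framed formal components and attached triply marked $\pp^1$'s (with marked points $0_j, 1_j, \infty_j$, the last being the node); it follows that $\forg$ factors through the formal neighborhood $(\bar{\M}_{g,n+2f}^{unst})^\wedge$. That the fiber of $\forg$ over a fixed marked curve is a single $\A_{0,1}^f$-orbit is straightforward: two open embeddings $\aa^1\to Y$ into the same component with matching images of $0,1$ differ by an element of $\A_{0,1}$. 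The main technical obstacle is showing surjectivity onto $(\bar{\M}_{g,n+2f}^{unst})^\wedge$, i.e.\ given an arbitrary formal deformation $Y/S$ of an $\iota$-image curve, producing open embeddings $\alpha_j$ extending the canonical $\aa^1\cong \pp^1\setminus\infty_j$ of the central fiber. I would accomplish this locally near each attached $\pp^1$-bubble, where the universal deformation of the attaching node is the standard smoothing family $xy=t$; within this, a choice of coordinate on $\aa^1$ pinned at $0_j, 1_j$ extends formally-smoothly and uniquely up to an element of $\A_{0,1}$ to a coordinate over $S$, giving $\alpha_j$. The stability refinement at the end of the theorem is automatic: the attached $\pp^1$-components on the hourglass side are always stable (three special points each), so the stability condition on the global (non-formal) components of $X$ coincides with that on the corresponding components of $Y$.
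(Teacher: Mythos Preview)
Your proposal is correct and follows essentially the same approach as the paper: introduce the hourglass stack $\Hour_{g,n,f}^{unst}$, build $\hourparam$ by asymmetric gluing of $(\aa^1;0,1)$ at each framing, invert it by passing to formal neighborhoods of the attaching nodes, and read off the $\A_{0,1}^f$-torsor structure from the forgetful map to $\bar{\M}_{g,n+2f}^{unst}$. If anything, your treatment of surjectivity onto $(\bar{\M}_{g,n+2f}^{unst})^\wedge$ via the local model $xy=t$ is more explicit than the paper's, which simply declares the torsor property ``obvious'' after invoking the annulus argument.
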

\begin{proof}
Let $(X; x_i; \phi_i)$ be an unstable framed formal curve in $\bar{\ffc}_{g,n,f}$. Define $$\hour(X)\in \bar{\M}_{g, n+2f}^{unst}$$ to be the curve in $\bar{\M}_{g,n + 2f}$ given by gluing a copy of $(\aa^1; 0,1)$ asymmetrically, via the standard coframing of $\aa^1,$ and marking the points $(x_1,\dots, x_n; 0_1, \dots, 0_f, 1_1,\dots, 1_f)$ given by images of marked points in $X$, and images of the points $0, 1$ in the various copies of $\aa^1$. This curve is canonically endowed with $f$ maps $\alpha_i:\aa^1\to \hourparam(X)$ which map $0,1\in \aa^1$ to the corresponding ``new'' marked points $0_i, 1_i\in \hourparam(X)$. Let $\Hour_{g,n,f}^{unst}$ be the the stack which classifies the data of $(Y\in \bar{\M}_{g,n+2f}; x_i, 0_j, 1_j; \alpha_j:\aa^1\to Y)$ with the obvious conditions, namely $\alpha_j(0) = 0_j$ and $\alpha_j(1) = 1_j.$ Then we have a map of stacks $$\hourparam: \bar{\ffc}_{g,n,f}^{unst}\to \Hour_{g,n,f}^{unst},$$ which is an isomorphism of stacks by an argument analogous to the one in Section \ref{sec:annuli}. Now it is obvious that  $\Hour_{g,n,f}^{unst}$ is an $\A_{0,1}^f$-torsor over $(\bar{\ffc}_{g,n,f}^{unst})^\wedge,$ and that this map preserves both stability (as every new glued component is stable) and strict instability (as strict instability is a property of the global irreducible component of a curve, which is not affected by any of our gluing procedures).
\end{proof}

It remains to understand $\bar{\ffc}_{0,0,1}$ of framed formal disks. Note that these will not be very interesting, as they cannot be nodal (indeed, none of the new results in this paper would change if we replaced $\bar{\ffc}_{0,0,1}$ with the one-point schem, classifying the formal disk with a standard framing). Nevertheless, we can give a model for $\ffc_{0,0,1}$ in terms of a variant of the hourglass construction. Indeed, let $\A_{0,1,-1}$ be the group of automorphisms of $\aa^1$ fixing $-1, 0, 1.$ Then we have an equivalence $\mathtt{half-hour}:\ffc_{0,0,1}\to \A_{-1,0,1}$ given by taking $D, \phi$ to the asymetrically glued space $D\sqcup_{\phi\sim\psi_\infty}(\aa^1),$ with markings at the images of $-1, 0, 1$. Then the resulting space will be $\pp^1$ with three marked points (which has no automorphisms or moduli), and conversely, the framing can be uniquely recovered from a map $\aa^1\to \pp^1$ which misses $\infty$ (equivalently, maps $\aa^1$ to $\aa^1$) and fixes $-1,0,1$. 
%%%%%%%%%%%%%%%%%%%%%%%%%%%%%%%%%%%%%%
%%%%%%%%%%%%%%%%%%%%%%%%%%%%%%%%%%%%%%

\section{Stabilization, stable gluing, annuli \dl{(a8)}}\label{sec:a8}
\subsection{Stabilization and stable gluing \dl{(a8.1)}}\label{sec:a8.1}
Using the last section, we can reformulate our stability condition as follows:
\begin{center}
$(X; x_i; \phi_j)\in \bar{\ffc}_{g,n,f}^{unst}(S)$ is stable iff $\hourparam(X)\in \bar{\M}_{g,n+2f}^{unst}$ is stable.
\end{center}
Now on the level of classifying closed curves, for $2g + n\ge 3$ there is a \emph{stabilization} map $\stabilize: \bar{\M}_{g, n}^{unst}\to \bar{\M}_{g, n}$ which is a left inverse to the open embedding $\bar{\M}_{g,n}\to \bar{\M}_{g,n}^{unst}$ (exhibiting a sort of non-separatedness property  of the stack $\bar{\M}_{g,n}^{unst}$). This stabilization map is compatible with gluing, and for any curve $X\in \bar{\M}_{g,n}^{unst}(S)$ there is a map $X\to \stabilize(X)$, compatible with marked points. Now the stabilization map extends in an obvious way to the hourglass space $\Hour_{g, n, f}$: indeed, if $\alpha_j:(\aa^1;0,1)\to (Y; 0_j, 1_j)$ is a global framing map, then it remains so after stabilization (as on the reduced locus, every component hit by a framing map is stable). This induces a stabilization map $\stabilize:\bar{\ffc}_{g,n,f}^{unst}\to \bar{\ffc}_{g,n,f}.$ We define new gluing maps $\glue_{jj'}: \bar{\ffc}_{g,n,f}\times \bar{\ffc}_{g',n',f'}\to \bar{\ffc}_{g+g', n+n', f+f'-2}$ and $\selfglue_{ij}:\bar{\ffc}_{g,n,f}\to \bar{\ffc}_{g+1, n, f-2},$ by composing the unstable gluing maps (which, recall, over any geometric point will blow out an unstable component of genus  zero) with the map $\stabilize.$ Importantly, the new stable gluing maps still satisfy the order-independence property we expect of gluing maps: namely, composing several gluing or self-gluing operations at once does not depend on the order of these operations (this is obvious for stable gluing from locality of the stabilization procedure over the stabilized curve). In particular, the stable gluing operation endows the stack of annuli $\bar{\ffc}_{0,0,2}$ with the structure of an associative semigroup in the category of stacks. We end this chapter with a brief ``lie-theoretic'' description of this semigroup: the semigroup of annuli behaves like a large-volume limit of global versions of the Witt algebra (equivalently, the Virasoro algebra at $c = 0$).

\subsection{A unital extension of $\bar{\ann}$ and a multiplication formula \dl{(a8.2)}}\label{sec:a8.2}
Note that the algebraic semigroup $\bar{\ann} (= \bar{\ffc}_{0,0,2})$ is non-unital, and it would be desirable for certain applications (such as operad structure on the $\bar{\ffc}_{*,*,*}$) to extend it to a unital monoid in a natural way. This will be the goal of the present chapter. Namely, we will define a \emph{unital} monoid $\bar{\A}_m$ which is unital and contains $\bar{\ffc}_{0,0,2}$ in a ``formally open'' sense (and further, in a sense to be seen later, does not change the motivic type). The monoid $\bar{\A}_m$ will act (in $f$ commuting ways corresponding to the framings) on each space $\bar{\ffc}_{g,n,f}$ in a way that extends the actions of $\bar{\ann}$ (by symmetric gluing). 

Note that the semigroup $\bar{\ann}$ has a complex-analytic counterpart defined in \cite{oancea_vaintrob}, the space $\t{NodAnn}$ of possibly nodal stable complex annuli: see \cite{oancea_vaintrob} for a definition. This semigroup is also non-unital, and admits a unital extension (called $\tilde{\t{NodAnn}}$ in \cite{oancea_vaintrob}), defined using ideas very similar to the construction of the present chapter. There is an interesting difference between these two extensions: while in the complex-analytic context, $\tilde{\t{NodAnn}}$ is a (partial) compactification of $\t{NodAnn},$ in the present algebro-geometric context the embedding of $\bar{\ann}$ inside $\bar{\A}_m$ can be described (ignoring monoid structure) as the embedding $\dd\times \dd^\infty\subset \aa^1\times \dd^\infty,$ i.e.\ the larger space extends one of the formal coordinates to a ``global one''.

\

As a pleasant consequence of the construction in the present chapter, we deduce an explicit formula for multiplication in $\bar{\ann}$ in coordinates, in terms of a certain ``large-volume'' limit of (a variant of) the Witt algebra $W : = \Gamma(T\gg_m)$ of vector fields on the punctured line. 

\

We construct $\bar{\A}_m$ as the partial compactification of a certain group stack (in fact, a group ind-scheme). 
\begin{defi}[\dl{a14}]\label{defi:a14} Define $$\A_m: = \aut^{conn}(\gg_m)$$ to be the identity connected component in the stack of automorphisms of the open scheme $\gg_m.$
\end{defi}
This is an infinite-dimensional formal Lie group with closed locus $\gg_m$ itself (acting by left multiplication) and Lie algebra given by algebraic vector fields on $\gg_m.$ The embedding of the closed locus gives rise to a short exact sequence in ind-group schemes $$1\to \gg_m\to \aut^{conn}(\gg_m)\to \aut^{conn}(\gg_m, 1)\to 1.$$ (If we were to avoid taking connected components, we would still get an exact sequence, but both $\aut(\gg_m)$ and $\aut(\gg_m, 1)$ would contain an extra ``antipodal'' connected component containing the automorphism $t\mapsto t^{-1}$.)
The last Lie group $\aut(\gg_m, 1)^{conn}$ is a formal Lie group with a single point, thus uniquely determined by its Lie algebra, which is the subalgebra of vector fields on $\gg_m$ spanned by $$\{x^n(x\frac{d}{dx})\mid n\neq 0\},$$ i.e.\ by all standard generators except for the equivariant one $x \frac{d}{dx}.$ 
\begin{thm}[\dl{thm:a7}]\label{thm:monoidext}\label{thm:a7}
There is a monoid $\bar{\A}_m$ in which the group $\A_m$ is an open submonoid, with reduced points $\aa^1$ (with multiplicative monoid structure), and such that there is a canonical isomorphism of monoids between $\ffc_{0,0,2}$ and the completion $\bar{\A}_m^\wedge$ in the neighborhood of the point $0\in \aa^1.$ Further, there are gluing maps $\glue_{j}:\bar{\A}_m\times \bar{\ffc}_{g,n,f}$ which extend, on the one hand, the map $\bar{\ffc}_{0,0,2}\times \bar{\ffc}_{g,n,f}$ of gluing an annulus to the $j$th marked component of a curve and, on the other hand, the map $\reparam_j:\A_g\times \bar{\ffc}_{g,n,f}\to \bar{\ffc}_{g,n,f}$ in which $\alpha\in \A_g$ acts by changing the framing $\phi_j$ by precomposing with the restriction of $\alpha$ to the punctured neighborhood of $0$. 
\end{thm}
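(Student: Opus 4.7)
The plan is to construct $\bar{\A}_m$ explicitly as $\A_{neg}\times \aa^1_t\times \A_{neg}$, extending the coordinate description of $\ffc_{0,0,2}$ from Corollary~\ref{cor:a5}. Over this space, put the universal family $(X_t;\phi\circ\alpha,\phi'\circ\alpha')$ at the point $(\alpha, t, \alpha')$, where $X_t := \spec\bigl(k[x,y]/(xy-t)\bigr)$ and $\phi^*(x,y) = (u,tu^{-1})$, $(\phi')^*(x,y) = (tu^{-1}, u)$. Since $\A_{neg}$ is pro-nilpotent (its reduced locus is a single point), the reduced locus of $\bar{\A}_m$ is canonically $\aa^1_t$. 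Base-changing along $\dd_t\hookrightarrow \aa^1_t$ recovers the universal family classifying $\ffc_{0,0,2}$ in Corollary~\ref{cor:a5}, so completion at the preimage of $0\in\aa^1$ yields a canonical isomorphism $\bar{\A}_m^\wedge\cong \ffc_{0,0,2}$. For $t\in\gg_m$, the fiber $X_t$ is isomorphic to $\gg_m$, and $\phi,\phi'$ become coframings at $0$ and $\infty$; using that $\A_{neg}$ parametrizes coset representatives for $\aut(\dd)\subset\aut(\ddo)$ and the short exact sequence $1\to\gg_m\to\aut^{conn}(\gg_m)\to\aut^{conn}(\gg_m,1)\to 1$, the restriction to $\gg_m\subset\aa^1$ is canonically identified with $\A_m$, sitting as an open subspace.

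The monoid operation is defined by gluing. Given two points of $\bar{\A}_m$, their product is the symmetric gluing of the underlying framed curves followed by the stabilization map of Section~\ref{sec:a8.1}, which contracts the intermediate $\pp^1$ together with any resulting unstable components, producing again a framed genus-zero curve in $\bar{\A}_m$. On the reduced locus, two moduli $t_1, t_2$ glue to a curve of modulus $t_1 t_2$, yielding the multiplicative monoid structure on $\aa^1$. Restricting to $t_i\in\gg_m$ recovers the group composition in $\aut^{conn}(\gg_m)$, exhibiting $\A_m$ as an open submonoid; restricting to the formal neighborhood of $0$ recovers the known semigroup structure on $\baran$, so that $\bar{\A}_m^\wedge\cong \ffc_{0,0,2}$ is an isomorphism of (semi)monoids. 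The action maps $\glue_j:\bar{\A}_m\times \bar{\ffc}_{g,n,f}\to \bar{\ffc}_{g,n,f}$ are defined analogously by symmetric gluing at the $j$-th framing followed by stabilization; their restriction along $\A_m\hookrightarrow\bar{\A}_m$ is the reparametrization action $\reparam_j$, and their restriction along $\ffc_{0,0,2}\cong \bar{\A}_m^\wedge\hookrightarrow\bar{\A}_m$ is the annulus-gluing action.

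The main obstacle is verifying associativity and unitality globally on $\bar{\A}_m$, given that the geometric character of the underlying curves differs between the $\A_m$ part (smooth $\gg_m$) and the formal neighborhood of $0$ (nodal annuli). The strategy is that all relevant constructions---the universal family $\{xy = t\}$, the framings, and the gluing---are defined by a single algebraic formula uniform in $t$, so associativity follows from a single computation on $\A_{neg}\times \aa^1_t\times \A_{neg}$ valid for all values of $t$. Concretely, one writes the gluing on coordinates as the evident action of $\aut(\ddo)^2$ on framings modulo $\aut(\dd)^2$, which reduces associativity to a calculation with formal Laurent series that can be carried out directly, or alternatively by Baker--Campbell--Hausdorff in the Lie algebra of vector fields on $\gg_m$. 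The same calculation yields the promised explicit multiplication formula in $\baran$ as a large-volume limit of this Witt-like multiplication, producing the ``lie-theoretic'' description advertised at the start of Section~\ref{sec:a8.2}.
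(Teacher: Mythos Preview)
Your approach is different from the paper's, and there is a genuine gap where the two diverge.

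The paper does \emph{not} take the coordinate space $\A_{neg}\times\aa^1\times\A_{neg}$ as its definition of $\bar{\A}_m$. Instead it introduces a new moduli functor $\tilde{\Hour}$ (Definition~\ref{defi:a15}), classifying triples $(U;\, i_1,i_2:\gg_m\to U)$ where $U$ is a stable nodal genus-zero curve with two punctures and the $i_j$ are open embeddings satisfying a coframing condition at the punctures. The monoid law is then \emph{defined} as gluing of such triples along the shared $\gg_m$, followed by stabilization. Because this moduli problem is stated uniformly (always in terms of maps from $\gg_m$), associativity and well-definedness of the product are automatic, and the identification of the open part with $\A_m$ is tautological (a smooth $U$ is a copy of $\gg_m$, and a pair of parametrizations is an automorphism). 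The coordinate picture $\A_m^-\times\aa^1\times\A_m^+$ is deduced \emph{afterwards}, from freeness of the two-sided $\A_m^+\times\A_m^-$ action on the already-constructed monoid.

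Your route runs into the following problem. You define the product on $\bar{\A}_m$ as ``symmetric gluing of the underlying framed curves followed by stabilization.'' But symmetric gluing (Definition~\ref{def:a12}) and stabilization (Section~\ref{sec:a8.1}) are only set up for \emph{framed formal} curves, i.e.\ for the regime where the $\phi_i$ are genuine framings of formal components. Over the open locus $\gg_m\subset\aa^1$ you yourself note that $X_t\cong\gg_m$ carries no formal component and $\phi,\phi'$ are \emph{coframings}; the asymmetric gluing of Definition~\ref{def:a11} then has no framing input to eat, so the operation is not defined by the constructions already in hand. Saying ``all relevant constructions are given by a single algebraic formula uniform in $t$'' is exactly the point that needs proof: you must exhibit a single moduli problem, or a single fibre-product-of-rings formula, that specializes correctly on both loci and is manifestly associative. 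The paper's $\tilde{\Hour}$ is precisely such a uniform formulation. Similarly, your identification of $\A_{neg}\times\gg_m\times\A_{neg}$ with $\A_m$ via ``$\A_{neg}$ is a set of coset representatives for $\aut(\dd)\subset\aut(\ddo)$'' does not immediately give an isomorphism with $\aut^{conn}(\gg_m)$: the coset description lives in $\aut(\ddo)$, not in $\aut(\gg_m)$, and bridging the two requires an argument like the paper's freeness computation for the $\A_m^+\times\A_m^-$ action.

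In short: your coordinate ansatz is morally the right object and matches the paper's post-hoc Lie-theoretic description, but defining the monoid law directly on it forces you to reinvent what the paper gets for free from the $\tilde{\Hour}$ moduli description. If you want to salvage your approach, you should either reformulate the family over $\aa^1$ as a family of $\gg_m$-parametrized punctured curves (which \emph{is} the paper's $\tilde{\Hour}$), or else write down explicitly the ring-level gluing formula and verify by hand that it agrees with composition in $\A_m$ on the open locus and with $\glue$ on the formal locus.
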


\begin{proof}
Let $(\gg_m)_1, (\gg_m)_2$ be two tori with coordinates $t_1, t_2,$ respectively. Let $\tilde{\Hour}^\circ$ be connected component of the identity in the moduli stack of triples $$(U, \alpha_1:(\gg_m)_1\to U, \alpha_2:(\gg_m)_2\to U),$$ where $U$ is a space and $\alpha_1, \alpha_2$ are isomorphisms, which (on the reduced locus) take $0\in \bar{(\gg_m)}_1$ to $0\in \bar{U}$ and $\infty\in \bar{(\gg_m)}_2$ to $\infty\in \bar{U}$. Via identification of $U$ with $\gg_m$ via $\alpha_1$ and of $(\gg_m)_1$ with $(\gg_m)_1$ via $t_1\mapsto t_2$, this scheme is canonically isomorphic to the scheme $\A_m$ above. Now we extend this to the following space.
\begin{defi}[\dl{a15}]\label{defi:a15}
Let $\tilde{\Hour}$ be the space classifying triples $i_1:(\gg_m)_1\to U\from \gg_m:i_2$ with $U$ a stable nodal curve of genus zero with two punctures, with the requirement that $i_1$ takes the coframing $\psi_0$ to a coframing of $U$ and $i_2$  takes the coframing $\psi_\infty$ to a \emph{nonintersecting} (i.e., over any geometric point, bounding a different point of the glosure) coframing. We impose no condition on $\phi_1\circ \psi_\infty$ or  $\phi_2\circ \psi_0.$ 
\end{defi}

This space has monoid structure, with $(U; \alpha_1, \alpha_2)\circ (U'; \alpha_1', \alpha_2')$ given by stabilization of the space $U\cup_{\alpha_2\sim \alpha_1'}$ with two global components glued. To interpret its geometry, we cover $\tilde{\Hour}$ by two opens. One will be $\tilde{\Hour}^\circ,$ corrresponding to triples $(U; \alpha_1, \alpha_2)$ with no nodal fibers and the other given by the complement to the closed subset of $\tilde{\Hour}^{\circ}$ corresponding to pairs of parametrizations with $\alpha_1(1) = \alpha_2(1).$ Write $\tilde{\Hour}^{\neq}$ for this open. Then we have a map $\tilde{\Hour}^{\neq}\to \bar{\M}_{0,4}$ given by the glued space $$\bar{U}: = U\cup_{\alpha_1}(\aa^1)_{t_1}\cup_{\alpha_2}(\aa^1)_{t_2^{-1}},$$ and we mark the boundary points (images of $0$ in the two glued copies of $\aa^1$), along with the (by assumption distinct) points $\alpha_1(1), \alpha_2(1)$. Then $\tilde{\Hour}^{\neq}$ is canonically an $\A_{0,1}$-torsor over $\M_{0,4}\cup \{\del\}\subset \bar{\M_{0,4}},$ where $\del$ is the boundary component corresponding to the reducible nodal curve with $\alpha_1(\{0,1\})$ and $\alpha_2(\{1,\infty\})$ in different components. Its neighborhood of $\del$ is canonically identified with $\Hour_{0,0,2}$, in a way that is comptible with composition. Since the complement $\tilde{\Hour}^{\neq}$ is contained in $\tilde{\Hour}^\circ,$ we see that $\tilde{\Hour}$ is covered by these two smooth opens. Further, on the level of reduced points, the two opens give the cover of $\aa^1$ by the complement of $1$ (corresponding to $\tilde{\Hour}^{\neq}$ via an identification $\M_{0,4}$ with $\pp^1\setminus \{0,1,\infty\},$ with the boundary point $\del$ going to $0$) and the compatible identification of the reduced locus of $\tilde{\Hour}^\circ$ with $\A_m,$ whose reduced locus is canonically $\gg_m$ (given by the image of $1$ under automorphisms). 
\end{proof}
\begin{defi}[\dl{a16}]\label{defi:a16}
  Define $\bar{\A}_m: = \tilde{\Hour},$ viewed as a monoid. 
\end{defi}
We can express the gluing in another, more Lie theoretic way. Let $\A_m^+\subset \A_m$ be the formal subgroup given by exponentiating the positive Lie subalgebra of the (non-extended) Virasoro, spanned by $$x^n\cdot  x \frac{d}{dx},  \quad n\ge 1$$ and $\A_m^0\subset \A_m$ the algebra obtained by conjugation by the antipodal map (i.e. given by $x^{-n}\cdot x\frac{d}{dx},\quad n\le -1$). Let $\rho_{\pm}:\A_m^+\times \A_m^-\to \aut(\A_m)$ be the automorphism of the group scheme given by two-sided action. From monoidal structure on $\bar{\A}_m = \tilde{\Hour}$, this action extends to an action on $\tilde{\Hour}$, which, abusing notation, we also denote $\rho{\pm}$. Now observe that the action $\rho_{\pm}$ of $\A_m^+\times \A_m^-$ on $\tilde{\Hour}$ is free. Indeed, it suffices to see that the orbit of every reduced geometric point $p\in \aa^1\cong \tilde{\Hour}_{red}$ (we use the subscript $red$ intsead of $0$ to avoid confusion with $\tilde{\Hour}^0$). We consider three cases. First, if $p = 1$ is the identity of the semigroup (locally, a Lie group) then to first order, the Lie group $\a_m^+\times \a_m^-$ acts on the Lie group $\a_m$ by addition, and this action is free because $\a_m^\pm$ are disjoint as sub-vector spaces. Now after translating by $p$, freedom of action on any $p\in \gg_m$ is equivalent to freedom near $1$ of the conjugated action $\A_m^-\times p(\A_m^+)p^{-1}$; on the level of Lie algebras, this conjugation rescales the coordinate $x^n(d \frac{d}{dx})$ by $p^m\neq 0$, thus the action is still free. Finally, at the singular point $0\in \aa^1,$ the action corresponds to changing parameterizations of a singular annulus by a subgroup in $\aut(\ddo)\times  \aut(\ddo)$ orthogonal to $\aut(\dd)\times  \aut(\dd),$ hence is a free and transitive action on strictly nodal framed annuli. From this we deduce a coordinatization $\bar{\A}_m\cong \A_m^-\times \aa^1\times \A_m^+;$ the monoidal structure is then the unique algebraic extension of the multiplication of the group structure on $\A_m\cong \A_m^-\cdot \gg_m\cdot \A_m^+.$ Note the similarity  of this construction with normal-ordered product in vertex algebras. Considering the local neighborhood around $0$, this also leads to a nice Lie-theoretic interpretation of the semigroup structure on $\bar{\ffc}_{0,0,2}$.

% \begin{defi}
% Write $$\bar{\ffc}_{0,0,2}^+:= \bar{\A}_m$$ for the unital monoid extending $\bar{\ffc}_{0,0,2}$ defined above. 
% \end{defi}
This monoid acts in $f$ commuting ways on $\bar{\ffc}_{g,n,f}$: to each framing $\phi$, the group scheme $\tilde{\hourparam}^\circ\subset \aut(\gg_m)$ acts on the framing by reparametrization in a way compatible with the action by gluing of the non-unital semigroup scheme $\bar{\ffc}_{0,0,2}$: this compatibility (and the resulting unital action of the whole $\bar{\ffc}_{g,n,f}$) follows from the argument in the proof of Theorem \ref{thm:monoidext} above. 

Note that it is evident from the past two sections that one could eliminate working with formal curves in the definition of the $\bar{\ffc}_{g,n,f}$ (and of gluing between them), and rather consider \emph{global} curves with punctures and maps from $\gg_m$ understood as boundaries. We have chosen to work from the point of view of formal curves as we find it more intuitive and easier to relate to the complex-analytic picture. The formal group functor is also the one classified by the forgetful map $\bar{\ffc_{g,n+1,f}}\to {\ffc_{g,n,f}}$, as seen in the following section. 

\subsection{Universal families}
In addition to gluing maps, the $\ffc_{g,n,f}$ are related among themselves by two important ``universal family'' maps, which we will use to understand the geometry of more complicated moduli spaces in terms of simpler ones. These maps are ``families'' insofar as they are fibrations in a formal sense and their fibers solve certain \emph{relative moduli problems}. The first is the \emph{tautological family}, $\taut:\bar{\ffc}_{g,n,f}^{taut}\to \bar{\ffc}_{g,n,f}.$ This is the stack which associates to a test scheme $S$ the groupoid of pairs $X, s$, where $X\to S$ is a framed formal curve and $S\to X$ is a section. Its fiber over the geometric point classified by $(X, x_i, \phi_i)$ is the formal scheme $X$. The following theorem is analogous to the case of ordinary curves (either $\bar{\M}_{**}$ or $\M_{**}$).
\begin{thm}
  The classifying map $\bar{\ffc}_{g,n,f}^{taut}\to \bar{\ffc}_{g,n,f}$ is isomorphic to the map $\bar{\ffc}_{g,n+1, f}\to \bar{\ffc}_{g,n,f}$ given by forgetting the last point and stabilizing.
\end{thm}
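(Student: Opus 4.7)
The plan is to construct the forget-and-stabilize map $\tilde{F} : \bar{\ffc}_{g,n+1,f} \to \bar{\ffc}_{g,n,f}^{\taut}$ directly, and then identify its inverse by a blow-up at the section. Given a family $(Y; x_1, \dots, x_{n+1}; \phi_1, \dots, \phi_f)$ over $S$, I would apply the stabilization procedure of Section \ref{sec:a8.1} after forgetting $x_{n+1}$ to obtain $X := \stabilize(\forg_{n+1}(Y))$ together with a canonical stabilization morphism $\pi : Y \to X$ over $S$, which contracts precisely those global components that have become unstable upon dropping $x_{n+1}$. Composing with $x_{n+1} : S \to Y$ produces a section $s := \pi \circ x_{n+1} : S \to X$, giving a natural element of $\bar{\ffc}_{g,n,f}^{\taut}(S)$, functorial in $S$ and evidently compatible with the forgetful maps down to $\bar{\ffc}_{g,n,f}$.

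The main step is to construct an inverse $G : \bar{\ffc}_{g,n,f}^{\taut} \to \bar{\ffc}_{g,n+1,f}$ by a blow-up at the section. Given $(X; x_i; \phi_i; s)$, the reduced image $s_0$ lies (\'etale-locally on $S$) in one of four loci; in each case I would insert a rigid $\pp^1$-bubble with three special points to accommodate $x_{n+1}$:
\begin{enumerate}
\item[(a)] If $s_0$ is generic (a smooth point of a global component disjoint from the $x_i$ and from all formal attachments), set $Y := X$ and $x_{n+1} := s$.
\item[(b)] If $s_0 = x_i$ for some $i$, insert a $\pp^1$ at $x_i$ carrying both $x_i$ (moved) and $x_{n+1}$.
\item[(c)] If $s_0$ lies on a node between two global components, insert a $\pp^1$ at the node carrying $x_{n+1}$.
\item[(d)] If $s_0$ lies at the closed point of a formal component $\dd$, insert a $\pp^1$ between the global curve and $\dd$ carrying $x_{n+1}$.
\end{enumerate}
In each case the inserted bubble has exactly three special points, hence no moduli; the infinitesimal location of $x_{n+1}$ on it is uniquely determined by the infinitesimal data of $s$ (a cross-ratio in (b)--(c); the nilpotent value $s^*(u)$ in (d), where $u$ is the formal coordinate on $\dd$). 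Patching across the \'etale cover follows from the locality of the asymmetric gluing construction of Definition \ref{def:a11}.

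I would then verify that $\tilde{F} \circ G$ and $G \circ \tilde{F}$ are both the identity. For $\tilde{F} \circ G$: once $x_{n+1}$ is forgotten, the inserted bubble has only two special points and is therefore contracted by stabilization, recovering $X$ and $s$. For $G \circ \tilde{F}$: if the component of $x_{n+1}$ in $Y$ survives stabilization then case (a) applies and $G$ returns $Y$; otherwise the contracted component had precisely the three-special-point form of (b), (c), or (d), and $G$ reconstructs it.

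The main obstacle is case (d), which has no classical analogue and requires verifying that the new framed curve $Y$ with reattached formal disk $\dd$ is genuinely a valid object of $\bar{\ffc}_{g,n+1,f}$ and that the deformation captured by $x_{n+1}$'s position on the inserted $\pp^1$ exactly matches the infinitesimal data of a general section into $\dd$. The cleanest route is to invoke Theorem \ref{thm:torsor-str}: both $\bar{\ffc}_{g,n,f}^{\taut}$ and $\bar{\ffc}_{g,n+1,f}$ are $\A_{0,1}^f$-torsors over the appropriate formal neighborhoods in unstable marked Deligne--Mumford spaces, and both $\tilde F$ and $G$ are manifestly $\A_{0,1}^f$-equivariant by construction. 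The statement then reduces to Knudsen's classical theorem $\bar{\M}_{g,m}^{\taut} \cong \bar{\M}_{g,m+1}$ applied with $m = n + 2f$ (restricted to the relevant formal neighborhood of the unstable strata), thereby side-stepping the need to check the inverse-property directly in the most delicate case.
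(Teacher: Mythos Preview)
Your final paragraph is essentially the paper's proof: the paper invokes Theorem~\ref{thm:torsor-str} (= Theorem~\ref{thm:a6}) to view the forget-and-stabilize map $\bar{\ffc}_{g,n+1,f}\to\bar{\ffc}_{g,n,f}$ as the basechange of the $\A_{0,1}^f$-torsor $\bar{\ffc}_{g,n,f}\to\bar{\M}_{g,n+2f}^\wedge$ along the classical forgetful map $\bar{\M}_{g,(n+1)+2f}^\wedge\to\bar{\M}_{g,n+2f}^\wedge$, and then identifies the tautological family on the hourglass side as exactly the formal complement of the glued $\aa^1$'s in the universal curve --- which is Knudsen's theorem restricted to the relevant formal neighborhood.

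Everything preceding that paragraph --- the explicit construction of $G$ by blowing up into cases (a)--(d) --- is scaffolding the paper does not build and does not need. Once you reduce via the torsor structure, the case analysis (including the delicate case (d) you flag) is absorbed entirely into the classical statement for $\bar{\M}_{g,m+1}\to\bar{\M}_{g,m}$, so there is no separate verification required. If you want to keep the explicit description for intuition, that's fine, but the actual proof is your last four sentences.
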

\begin{proof}
  It follows from Theorem \ref{thm:a6} that the map $\bar{\ffc}_{g,n+1, f}\to \bar{\ffc}_{g,n,f}$ is given by basechange of the map $\bar{\ffc}_{g,n,f}\to \bar{\M}_{g,n+2f}^\wedge$ along the map $\bar{\M}_{g,(n+1)+2f}^\wedge\to \bar{\M}_{g,n+2f}^\wedge.$ The latter map is classified by the formal neighborhood inside the universal family $\bar{\M}_{g,n+1+2f}\to \bar{\M}_{g,n+2f}$ of the sub-family $\bar{\M}_{g,n+1+f}\to \bar{\M}_{g,n,f}$. The pullback of this neighborhood to the universal family over $\Hour$ is precisely the (formal) complement to the maps classified by $\aa^1\to \hour(S),$ in other words, the tautological family over $S$.
\end{proof}

The other important universal family is the \emph{universal rotor} (at the $i$th framing), classified by the \emph{rotor projection}, a map $\rotor_i:\bar{\ffc}_{g,n,f+1}\to \bar{\ffc}_{g,n+1,f}$ given by asymmetrically gluing a single copy of $\aa^1$ (with standard co-framing) and marking the image of $0\in \aa^1.$ This can also be interpreted in terms of the hourglass construction, via the following evident lemma. 
\begin{lm}\label{lm:firstrotor} Let $X\to S$ classified by $S\to \bar{\ffc}_{g,n+1,f}$ be a family of framed formal curves. Then a lifting of $S$ to $\bar{\ffc}_{g,n,f+1}$ is a pair consisting of 
\begin{enumerate}
\item A point, $y$ of $\bar{\M}_{g,n+(2+2f)}^\wedge$ over $\hour(S)\in \bar{\M}_{g,(n+1)+(2f)}^\wedge$;
\item A parametrization by $(\aa^1, 0, 1)$ of the new blown-out component containing $x_{n+1}, y$ (extending a parametrization of the complement of the node), and sending $0, 1$ to $x_{n+1}, y,$ respectively. 
\end{enumerate}
\end{lm}
In a single fiber of $\rotor_i,$ choices of $y$ are canonically parametrized by the formal neighborhood $\hat{x}_{n+1}$ of the last marked point and choices of parametrization are canonically a principal $\A_{0,1}$-bundle over $\hat{x}_{n+1}$. The curve parametrized by the fiber will have an extra node if and only if $y=x,$ i.e.\ the new blown out component is strictly nodal.

The rotor projection is a fibration (indeed, in a strong sense: fibers over any two $S$-valued point are \'etale locally isomorphic). Note that by chaining together several rotor projections we can understand the fiber of the map
$\rotor_{1,\dots, f} = \rotor_1\circ \dots\circ \rotor_f:\bar{\ffc}_{g,n,f}\to \bar{\ffc}_{g,n+f, 0}\cong \bar{\M}_{g,n+f}$ given by successively applying rotors. Indeed, the properties of $\rotor_i$ listed above imply that the fiber of the chained rotor map $\rotor_{1,\dots, f}$ over the curve $X\in \bar{\M}_{g,n+f}$ is an $\A_{0,1}^f$-torsors over the product of the formal neighborhoods of the last $f$ marked points. This map can also be understood via the hourglass construction. Namely, $\rotor_{1,\dots, f}$ is the composition of the map $\hour:\bar{\ffc}_{g,n,f}\to \bar{\M}_{g,n+2f}$ with the tautological map $\bar{\M}_{g,n+2f}\to \bar{\M}_{g,n+f}$ which forgets one of the two marked points associated to each framing. 

\section{Logarithmic schemes: basics}
The the theory of logarithmic schemes is the algebraic geometer's answer to the physicist's theory of fields with boundary conditions. In this section we will recall enough basic defitions and constructions having to do with logarithmic schemes for the purposes of defining and working with the log schemes $\bar{\ffc}_{g,n,f}$. We will relegate certain technical bits, having to do with normal-crossings log structures on formal schemes (and infinte-dimensional variants) to an appendix: see Appendix \ref{sec:formal_log}.

\subsection{Definitions}
Logarithmic schemes were introduced by Kato \cite{kato} as a generalization of the notion of a normal-crossings pair (and more general divisor pairs). A normal-crossings pair is a pair $(X,D)$ for $X$ a scheme and $D\subset X$ a normal-crossings divisor. The world of normal-crossings pairs, and more generally of logarithmic schemes, imitates and extends the world of varieties. In particular most of what one can do in geometry can be done for (sufficiently nice) logarithmic schemes and (arbitrary) normal-crossings pairs. There are notions of smooth and \'etale maps, pullbacks, forms and differentials, de Rham cohomology (see \cite{log_book}), and of coherent sheaves (see \cite{log_coh}). In many ways, the geometry on a normal-crossings pair $(X, D)$ models the geometry on the complement $X\setminus D$: for example, they have the same de Rham cohomology and \'etale site (in the sense of \cite{vologodsky}, they are motivically equivalent). In this paper logarithmic schemes allow us to make sense of the geometry of ``formal complements'' $X\setminus D$ where $D$ is a normal-crossings divisor in a \emph{formal} variety. Such complements do not make sense as classical geometric objects, as defined as the complement to a divisor in a formal variety might not be a formal variety itself (for example the formal disk $\dd\setminus 0$ is not a formal variety as we have seen before). On the other hand, for $D\subset X$ a normal-crossings formal subvariety, the normal-crossings variety $(X, D)$ does make sense, as does its underlying (``ind''-)logarithmic scheme. The geometry of this logarithmic object tells us what $X \setminus D$ \emph{should} be as a scheme. An additional advantage of logarithmic geometry in our context is that it will allow us to ``reduce'' the very large normal-crossings pair $\ffc_{g,n,f}: = (\bar{\ffc}_{g,n,f}, \bar{\ffc}_{g,n,f}^\nu),$ which is a pair of infinite-dimensional formal schemes, to obtain a logarithmic variety $\flc_{g,n,f}$ which is of finite type (with underlying scheme the reduced locus $\bar{\ffc}_{g,n,f}^0\cong \bar{\M}_{g, n+f}$). We will call this variety the moduli space of \emph{framed logarithmic curves} (of genus $g$ with $n$ marked points and $f$ framings). This space parallels in a remarkable way a construction of Kimura, Stasheff and Voronov, \cite{ksv}, which gives a finite-dimensional real model for the (infinite-dimensional complex-analytic) conformal operad of Segal, \cite{segal}. Specifically, to every log space $\X$ one can associate a \emph{Kato-Nakayama analytification} $\X^{an}$, which is a real analytic variety (in our case, a real manifold with boundary). The Kato-Nakayama space of the reduced moduli spaces of framed formal curves will coincide, in a way that respects gluing, with the KSV model. 

Our main reference for logarithmic geometry is \cite{log_book}. All log structures we will encounter are fine and saturated (or inductive limits of such). Say $X$ is a scheme. Then $\oo_X$ is a sheaf of rings on $X$ in both the Zariski and in the \'etale topology. We consider the latter. Let $(\oo, \cdot)$ be the sheaf $\oo_X$, but viewed as an \'etale sheaf of commutative \emph{monoids} under multiplication, ignoring addition. The subsheaf of invertible elements in $\oo_X,$ denoted $\oo^\times,$ is evidently a group scheme. The sheaf $\oo^\times$ is related to the sheaf of differentials, $\Omega,$ viewed as a sheaf of (additive) groups over $X$, by a beautiful map: namely, define the logarithmic differential $\t{dlog}:\oo^\times\to \Omega$ to be the map defined (locally) by $$\t{dlog}:f\mapsto \frac{df}{f}.$$ The foundational insight of log geometry is that if $D\subset X$ is a normal-crossings divisor then the map $\t{dlog}:\oo^\times \to \Omega$ extends to a larger sub-sheaf of the monoid $(\oo, \cdot)$, provided that one replaces the sheaf $\Omega$ of differentials by its twist, $\Omega(D)$, the sheaf of \emph{logarithmic differentials} with respect to $D$. Now the logarithmic differential extends to a map $$\t{dlog}_D: \oo^\times(D)\to \Omega(D),$$ where $\oo^\times(D)\subset (\oo, \times)$ is the monoid of functions which are invertible \emph{outside of $D$} (and may have arbitrary order of vanishing at $D$). Illusie and others observed that much of the geometry of the normal-crossings pair $(X, D)$ is captured by the monoid $\oo^\times(D)\subset (\oo, \cdot),$ the sheaf of logarithmic differentials $\oo(D)$ and the map $\t{dlog}_D$. In fact given any sub-monoid sheaf $\ll\subset (\oo, \cdot)$ with no locally zero sections, one can define a map $\t{dlog}_K: \ll\to \Omega\otimes K$ (for $K$ the sheaf of rational functions), then define $\Omega(\ll)$ to be the span over $\oo_X$ of the sheaf-theoretic image of this map. The initial idea of logarithmic geometry, then, is to generalize the normal-crossings picture and define a variety with ``logarithic boundary data'' as the data of a variety with a sub-monoid sheaf $\ll\subset (\oo, \cdot)$. It turns out that the theory behaves better (for example, admits well-behaved pullbacks) if instead of a sub-monoid, one allowed a more general monoid sheaf $\ll$ (in an \'etale sense) over $X$, together with a map of monoids $\ll\to (\oo, \cdot).$ 

This motivates the following definition, due to Kato.

\begin{defi} A prelog structure $(X, \ll, \alpha)$ on a scheme $X$ is a sheaf of monoids $\ll$ in the \'etale topology together with a map of monoids $\alpha:\ll\to (\oo_X, \cdot)$. 
\end{defi}
A logarithmic structure is a special kind of pre-log structure:
\begin{defi}
  A prelog structure $(X, \ll, \alpha)$ is a \emph{log structure} if the subsheaf $\alpha^{-1}(\oo^\times)\subset \ll$ is isomorphic to $\oo^\times$ via $\alpha.$
\end{defi}

Roughly, the purpose of a log structure is to isolate ``singular'' information carried by a pre-log structure, i.e.\ provide logarithmic behavior for functions in $(\oo_X, \cdot)$ which are not already invertible; if two pre-log structures differ only in their ``non-singular'' parts, they can be considered equivalent. In fact, a log structure can be considered an equivalence class of pre-log structures (similarly to how a sheaf can be viewed as an equivalence class of pre-sheaves), or in other words a pre-log structure on $X$ can be ``corrected'' to a fully logarithmic structure. The construction is as follows. 
\begin{defi}
  Given a prelog structure $\ll = (\ll, \alpha)$ on $X$, we define the \emph{associated log structure} $(\ll^+, \alpha^+)$ to be the sheaf $$\ll^+ : = \ll\times_{\alpha^{-1}\oo^\times} \oo^\times$$ (pushout in the category of abelian monoids), with map $\alpha^+$ induced from the maps of $\oo^\times, \ll$ to $\oo.$
\end{defi}
The functor $\ll\mapsto \ll^+$ is left adjoint to the forgetful functor $\forg:\mathtt{Log}\to \mathtt{PreLog}.$ As this functor is fully faithful, we have canonically $\ll^+\cong \ll$ if $\ll$ is already logarithmic and $(\ll^+)^+\cong \ll^+$ for pre-log $\ll$. 

A \emph{logarithmic scheme} is the data $\X = (X, \ll, \alpha)$ of a scheme with a log structure. When there is no ambiguity, we will suppress $\alpha$ and write $\X = (X, \ll)$.

Pre-log schemes form a category, in an evident way, and log schemes form a (full) sub-category, which we denote $\mathtt{LogSch}$. There is a pair of adjoint functors $$\mathtt{under}:\mathtt{LogSch}\leftrightarrows \mathtt{Sch}:\mathtt{triv}.$$ Here $\mathtt{under}$ is the forgetful functor that associates to the log scheme $\X: = (X, \ll)$ its underlying scheme $X$, and $\mathtt{triv}$ associates to a scheme $X$ the associated \emph{trivial log scheme}, with monoid $\ll = \oo^\times$ (mapping to $(\oo, \cdot)$ in the evident way). It follows from the adjunction that there is always a map $\pi:\X\to \mathtt{under}(\X)$ to the underlying scheme. 

The category of log schemes admits fibered products. An important special case is basechange along a map of schemes $X'\to X$ of a log structure $\X = (X, \ll)$ on $X$. It can be described alternatively as follows. Given a map of varieties $f:X'\to X$ and a log structure $\X = (\ll,\alpha)$ on $X$, define the \emph{pullback prelog structure} to be the pre-log structure associated to the monoid $\ll'_{pre} : = f^*(\ll)$ with map $\alpha'$ given by the composition $$[f^*\oo_X\to \oo_{X'}] \circ f^{-1}\alpha :\ll'_{pre}\to \oo_{X'}.$$ This is in general not a log structure. Define the \emph{pullback log structure} $(\ll', \alpha')$ to be the associted log structure $(\ll'_{pre},\alpha'_{pre})^+.$ The following easy proposition follows from universality of our constructions:
\begin{prop}
The log scheme $\X': = (X', \ll')$ defined above is canonically isomorphic to the pullback $\X\times_X X'$ in the category $\mathtt{LogSch}.$
\end{prop}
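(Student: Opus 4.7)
The plan is to verify the universal property of the fiber product $\X \times_X X'$ in $\mathtt{LogSch}$ directly, where $X$ and $X'$ are viewed as trivial log schemes via $\mathtt{triv}$, with the map $\X \to X$ being the canonical adjunction map to $\mathtt{under}(\X)$. Let $\Y = (Y, \mathcal{N}, \gamma)$ be an arbitrary test log scheme. I need to exhibit a natural bijection between log morphisms $\Y \to \X'$ on the one hand, and pairs consisting of a scheme morphism $h : Y \to X'$ together with a log morphism $m : \Y \to \X$ whose underlying scheme morphism is $f \circ h$, on the other. Note that log morphisms $\Y \to \mathtt{triv}(X')$ are the same as scheme morphisms $Y \to X'$: the log-structure axiom $\alpha^{-1}(\oo^\times) \cong \oo^\times$ forces a map from $h^{-1}\oo_{X'}^\times$ into $\mathcal{N}$ to land in $\mathcal{N}^\times \cong \oo_Y^\times$ and be uniquely determined by $h$.

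First I would unwind the definitions. A log morphism $\Y \to \X'$ amounts to a scheme map $h : Y \to X'$ together with a monoid map $\phi : h^{-1}\ll' \to \mathcal{N}$ compatible with the structure maps to $\oo_Y$. Here $\ll' = (f^*\ll)^+$ is, by construction, the log structure associated to the prelog pullback. By the left adjointness of $(-)^+$ to the forgetful functor from log structures to prelog structures, and since $\mathcal{N}$ is already a log structure, giving such a $\phi$ is equivalent to giving a prelog morphism $h^{-1}(f^*\ll) \to \mathcal{N}$ compatible with structure maps.

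Second, since the sheaf-theoretic inverse image functor commutes with composition, there is a canonical identification $h^{-1}(f^*\ll) = (f \circ h)^{-1}\ll$ of prelog structures on $Y$, with structure maps matching via the chain $(f \circ h)^{-1}\oo_X = h^{-1}(f^{-1}\oo_X) \to h^{-1}\oo_{X'} \to \oo_Y$. Hence a prelog map $(f \circ h)^{-1}\ll \to \mathcal{N}$ compatible with structure maps is, by definition, the very same data as a log morphism $\Y \to \X$ lying over the scheme morphism $f \circ h$. Composing the two identifications produces the desired natural bijection, which being functorial in $\Y$ establishes the universal property.

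The only technical point to keep straight — and the main potential source of confusion rather than a real obstacle — is the distinction between prelog and log pullbacks: prelog pullback is the sheaf-theoretic $f^{-1}$ applied to the monoid, whereas log pullback requires the further logification $(-)^+$. Once this is tracked cleanly, the proof is essentially formal, exploiting the adjunction to always work at the prelog level when the receiving monoid is already part of a log structure.
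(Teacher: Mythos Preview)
Your proof is correct and is precisely the unpacking of what the paper gestures at with the single phrase ``follows from universality of our constructions'': the paper gives no further argument, so your verification of the universal property via the adjunction $(-)^+ \dashv \forg$ is exactly the intended reasoning made explicit. One small point worth tightening is the step passing from maps out of $h^{-1}\ll' = h^{-1}\big((f^{-1}\ll)^+\big)$ to maps out of $h^{-1}f^{-1}\ll$: this uses not only the adjunction on $Y$ but also the standard compatibility $(h^{-1}P)^+ \cong \big(h^{-1}(P^+)\big)^+$, equivalently the functoriality $h^*f^* \cong (fh)^*$ of log pullback, which you are implicitly invoking.
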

If the map $X'\to X$ is \'etale then the pullback pre-log structure is already a log structure (as the logarithmic property is \'etale local). We say that a map of log schemes $\X'\to \X$ is \emph{strict \'etale} (equivalently: $X'$ is an \'etale open in $X$) if it is isomorphic to the basechange of $\X$ along an \'etale map $U\to X$ to the underlying scheme. Later we will study a larger class of \emph{log \'etale} maps between logarithmic schemes. Strict \'etale maps over $\X$ form a topology isomorphic to the \'etale topology on $X : = \t{under}(\X)$. 

In our definitions and constructions so far we have taken the underlying space $X$ to be a scheme. However as we have been working \'etale locally, everything extends to the world of orbifolds in a straightforward way. In particular, we make the following definition. 
\begin{defi}
  Write $\mathtt{LogOrb}$ for the category of orbifolds $X$ with an \'etale sheaf of monoids $\ll$ and a map $\alpha:\ll\to (\oo_X, \cdot)$ which give a log structure when basechanged to any scheme $U$ along an \'etale map $U\to X$.
\end{defi}
The category of log orbifolds can be extended to a larger stacky context in several ways. We make the following (somewhat lazy, but adequate) definition. 
\begin{defi}
A log stack is a (quasi-)functor $\mathtt{LogSch}\to \mathtt{Gpd}$ to the category of groupoids which satisfies descent for the strict \'etale topology. 
\end{defi}

\subsection{Examples of log schemes}
Given a normal-crossings pair $(X,D)$ of varieties we define its associated logarithmic scheme as follows.
\begin{defi}
  The log variety $(X,D)^{log}$ associated to the normal-crossings pair $(X, D)$ has underlying scheme $X$ and sheaf of monoids $\ll_D: = \oo^\times(D)\subset (\oo, \cdot)$ consisting of functions which are invertible outside of $D$. 
\end{defi}
A normal-crossings \emph{orbifold} is a map of orbifolds $D\to X$ whose pullback to any \'etale open \emph{scheme} $X'$ is the embedding of a normal-crossings divisor in a smooth variety. The construction of the log scheme associated to a divisor is local, so associated to any normal-crossings orbifold we have a log orbifold. Such log orbifolds are (locally) \emph{smooth} (see \cite{log_book}, IV.1.1). 

The first and most important smooth log orbifold for us (in fact, a log orbifold associated to a normal-crossings pair) is the \emph{moduli space of log curves}, $\M_{g,n}^{log}$. Namely, let $\bar{\M}_{g,n}^\V\subset \bar{\M}_{g,n}$ be the normal-crossings sub-stack of the smooth stack $\bar{\M}_{g,n}$ consisting of strictly nodal curves. We make the following definition, due to Kato.
\begin{defi}[\cite{kato_curves}]
  Define the log orbifold $\M_{g,n}^{log}$ to be the orbifold associated to the normal-crossings pair $(\bar{\M}_{g,n}, \bar{\M}_{g,n}^\V).$
\end{defi}
Recall that for many purposes, a normal crossings logarithmic structure $(X, D)$ encodes information about the open sub-stack $X\setminus D$ --- for example, de Rham invariants of the two sides coincide. In this sense, $\M_{g,n}^{log}$ is a logarithmic stand-in for the open stack $\M_{g,n}$. In many contexts, $\M_{g,n}^{log}$ is a nicer space to work with than $\M_{g,n}$ as its invariants are ``more finite''. For example, there are Hodge-to-de Rham spectral sequence on both sides starting with E1 term the ``Hodge cohomology'' $\bigoplus_{p,q}H^p\Omega^q$ (interpreted in a logarithmic sense for $\M_{g,n}^{log}$) which converges to the same de Rham cohomology theory on both sides; yet, for $\M_{g,n}$ the terms in the E1 page are infinite-dimensional whereas for $\M_{g,n}^{log}$ the E1 page is finite-dimensional (and, at least in the $g=0$ case, degenerates at the E1 term).

Another important stack for us will be the \emph{log point}, $\pt_{log}$. Namely, let $\pt_{prelog}$ be the group scheme $(X, \ll, \alpha) = (\spec(k), \nn, 0)$ where the map of monoids $0:\nn\to (k,\cdot)$ takes every element to $0$ (and similarly for any \'etale extension of $\spec(k)$).
\begin{defi}
  We define $\pt_{log}: = \pt_{prelog}^+.$
\end{defi}
$\pt_{log}$ has a very useful interpretation via basechange. Namely, let $(\aa^1,0)^{log}$ be the log variety associated to the divisor $0\subset \aa^1.$ Let $i:\pt\to \aa^1$ be the embedding of the point at $0$. Then $\pt_{log}\cong \pt\times_{\aa^1} (\aa^1,0)^{log}$. 

There is a generalization of this construction to famlies over a base. Namely, let $\tau$ be a line bundle over a base $B$, with total space $\aa_\tau$ and zero section $\iota:B\hookrightarrow \aa_\tau$ isomorphic to $B$. 
\begin{defi}
The \emph{relative log point} $(B, \tau)^{log}$ associated to the bundle $\tau$ is the pullback $B\times_{\aa_\tau} (\aa_\tau, \iota(B))^{log}$. 
\end{defi}

An alternative point of view on relative log bundles: if $\tau/B$ is a line bundle, then equivariant functions on $\aa_\tau$ invertible outside the zero section define, locally over $B$, a sheaf of monoids $\ll_\tau$ which is an extension of $\nn$ by $\oo^\times.$ The relative log point is the log scheme $(B, \ll_\tau, \alpha)$ where $\alpha(f) = f\mid_B$ for locally constant $f$ and $\alpha(f) = 0$ for all functions of higher degree. 

It will be useful for us to observe that the normal-crossings pair $(\aa^1, 0)$ has unital semigroup structure (in the category of normal-crossings pairs), with identity the point $1\in \aa^1$ and multiplication given by the map $\aa^1\times \aa^1\to \aa^1$ (compatible with normal-crossings structure). This implies that $(\aa^1,0)^{log}$ is a semigroup object in the category of log schemes. Now $0\subset \aa^1$ is a \emph{non-unital} sub-semigroup, and the basechange $\pt_{log}$ inherits (non-unital) semigroup structure (another way of viewing this: $\pt_{log}$ is the ``universal'' log scheme over $\spec(k)$ associated to the semigroup $\nn,$ and its multiplication structure can be contravariantly deduced from the diagonal map $\nn\to \nn\times \nn$). This semigroup structure is closely related to the semigroup structure on $\bar{\ann}$. Note that a relative log point $(B, \tau)^{log}$ does not admit semigroup structure; instead, there is a product map $(B, \tau)^{log}\times (B, \tau)^{log}\to (B, \tau^{\otimes 2})^{log}.$

\subsection{Standard classes of log maps and schemes}
In this section we introduce and discuss the behavior of several niceness properties of log schemes and maps of log schemes (mostly by reference to \cite{log_book}). The one most relevant to this paper is the notion of \emph{log \'etale map} $\X'\to \X,$ a generalization of the strict \'etale property defined earlier. 

\noindent {\bf Fine and Saturater (fs).} See \cite{log_book} chapter IV.1.2 for a definition of fine and saturated (fs) log schemes. The property of being fine and saturated is a logarithmic analogue of the property of being normal and of finite type. 

\noindent {\bf Closed immersion.} A map $\X\to \Y$ is a closed immersion if it is (\'etale locally) isomorphic to a basechange of $\Y\to \mathtt{under}(\Y)$ by a closed immersion of the underlying scheme of $Y$. 

\noindent {\bf Log thickening.} A map $\iota:(X,\ll, \alpha)\to (X',\ll',\alpha')$ is a log thickening if 
\begin{itemize}
\item $\iota$ is a closed immersion
\item $X$ is reduced and the underlying map $X\to X'$ is (\'etale locally) a nilpotent thickening, given by the nilpotent ideal sheaf $I_{X'}.$
\item \'Etale locally on $X',$ the subgroup $1+I$ acts freely on $\ll'$. 
\end{itemize}

\

All log stacks we will encounter are (\'etale locally on the underlying scheme) fs, or ind-thickenings of such.

\

\noindent {\bf Flat map.} A map $X\to Y$ of log schemes is flat if (locally) it is so on underlying spaces. 

\noindent {\bf Smooth log scheme.} See \cite{log_book} Section ... for a definition of smooth maps. Over a point it can be reformulated as follows: a scheme over $\qq$ is log smooth if and only if it is of the form $(X, D)^{log}$ for $X\setminus D$ smooth and $D$ locally of \emph{toric type} (i.e.\ isomorphic to the embedding of the boundary inside a toric variety). Note that normal-crossings pairs are smooth but log points and relative log points are not (though they have smooth geometric realizations, as defined in the next section). 

\noindent {\bf Log \'etale map.} See \cite{log_book}, ... for definition of log tangent bundles. A map $$\X'\to \X$$ of (fine and saturated) log schemes is \emph{log \'etale} if it is finite flat on underlying schemes, and induces an isomorphism of log tangent bundles. (Note that the flatness condition is essential here, unlike for ordinary schemes. Weakening the flatness condition gives an important larger class of maps called log \emph{unramified} maps which we will not consider here.)

\noindent {\bf Relationships and invariance.} We have, as for ordinary schemes, the implications \'etale $\implies$ smooth $\implies$ flat. All three of these classes of maps are closed under fibered product and invariant under basechange. The property of being fs is invariant with respect to fiber product over the underlying scheme. All smooth schemes are fs. 

\subsection{The Kato-Nakayama analytification}
If $X$ is a scheme over the complex numbers $\cc,$ the complex variety $X^{an}$ is a topological space with points $X(\cc)$ with topology induced by the analytic topology on the complex numbers $\cc,$ and with analytic structure: a sheaf of rings over $\cc$ given locally by a suitable analytic completion of regular functions. Kato and Nakayama, \cite{kato-nakayama} extended this notion to log schemes in a way compatible with \'etale and de Rham invariants (see Section \ref{sec:mot-inv}). Namely, define $\cc_{log}$ to be the scheme $\spec{\cc}$ with log structure given by the monoid $\ll_\cc: = \rr^{\ge 0}\times S^1$, mapping to $(\cc, \cdot)$ as the norm and argument of a complex number. This is a surjective multiplicative map of monoids which can be seen geometrically as the real blow-up of the complex plane $\cc$ at $0$. Note in particular that it is one-to-one on $\cc^*$, hence defines a log structure on $\spec(\cc).$ If $\X$ is a log scheme, we define $\X^{an}$ as a set as $\X(\cc_{log})$ -- note that if the log structure on $\X$ is trivial this coincides with $\X(\cc)$. The compatible analytic topologies on $\cc$ and $\ll_\cc$ give $\X^{an}$ the structure of a topological space. For a coherent log structure on a scheme of finite type, this space is finite-dimensional and real analytic, but no longer complex-analytic (indeed, it is often real odd-dimensional). One can endow it with some ``mixed'' real and complex analytic structure (in nice cases, for example, it has a stratification whose strata are canonically real torus fibrations over analytic spaces), but we will not need anything more than the topological structure of $\X^{an}$ here. The (topological) \'etale site on $\X^{an}$ refines the log \'etale topology on the underlying scheme $X$, and so can be ``glued'' out of any \'etale cover of $X$. This allows us to define the \'etale site of the analytification $\X^{an}$ for $\X$ a complex log orbifold; this will evidently be a topological orbifold (a site glued out of an atlas of sites of the form $U^{\et}/G$ for $U$ a topological space and $G$ a finite group acting on $G$ -- see e.g.\ \cite{witten_orbifolds}). 

We will need the following standard fact about Kato-Nakayama analytic realizations of log varieties with strict normal-crossings log structure. Let $(X, D)$ be a normal-crossings pair. We abuse notation by identifying $X$ with $X^{an}$ (and similarly for $D$). Let $D_1, \dots, D_k$ be the normal-crossings strata. Let $L_i$ be the $S^1$-bundle associated to the normal bundle on each $D_i$ (the normal bundle $N$ defines a $\cc^*$-equivariant bundle $N^*$, and we define the $S^1$-bundle to be the quotient $N^*/\rr_+$). Now for our chosen ordering $D_1,\dots, D_n$ of divisors, consider the real blow-up $Bl_{D_1,\dots, D_k}^\rr(X)$ which we defeine to be the sequential blow-up of $X$ at $D_1,$ then the proper transform of $D_2$ (which, note, will be a real blow-up of $D_2$), then $D_3,$ and so on.
\begin{lm}
The real blow-up $Bl^\rr_{D_1,\dots, D_k}(X)$ is canonically isomorphic, over $X$, to $Bl^{\rr}_{D_{\sigma(1)}, \dots, D_{\sigma(k)}}(X)$ for any re-ordering $\sigma$ of the divisor components. 
\end{lm}
\begin{proof}
  Note that if an isomorphism over $X$ exists between two blow-ups as above then it is unique, as it extends the trivial automorphism of $U = X\setminus D$ which is (canonically identified with) a dense open in any blow-up as above. Thus canonicity is a non-issue, and further, if an isomorphism exists (topologically) locally over $X$ it automatically globalizes. Working analytically locally, we can cover $X$ by opens $U$ which only intersect $\le n$ distinct divisors (here $n$ is the dimension of $X$). The restriction $B_U$ of $B$ to $U$ is the sequential blow-up of $D_i\cap U.$ It only depends on the relative order of the divisors that intersect $U,$ so WLOG we can assume they are $D_1,\dots, D_k$ (for some fixed $k$). By possibly refining our cover, we can identify $(U, D_1\cup \dots\cup D_k)$ with $(\cc^n, H_1\cup \dots \cup H_k),$ for $H_i$ the $i$th complex hyperplane. Now we see inductively that the $j$-fold sequential blow-up of $(\cc^n, H_1\cup\dots\cup H_k)$ is $\ll_\cc^j\times \cc^{n-k},$ where recall $\ll_\cc$ is the topological monoid $\rr^+\times S^1$ (viewed as a topological space) mapping to $\cc$ as the blow-up of the origin. The proper transform of $D_{i}$ for $i> j$ is then identified with the subvariety where the $i$'th coordinate is set to $0$. Thus the full sequential blow-up is $\cc^n$ along the $D_i$ is isomorphic (over $\cc^n$) to the product $\ll_\cc^k\times \cc^{n-k}.$ Performing the blow-ups in another order evidently produces an isomorphic (over $\cc^n$) space, and by the canonicity observed above we are done. 
\end{proof}
We deduce that given any strict normal-crossings divisor $D\subset X$ in a scheme, there is a canonically defined space $$Bl_D^\rr(X)$$ given by the blow-up of $X$ along the irreducible components of $D$ in any order. This procedure is evidently \'etale local over $X$, thus extends to a definition of $Bl_D^\rr(X)$ for a non-strict normal-crossings divisor, as well as for a normal-crossings divisor on an orbifold. 
\begin{lm}
Suppose $(X, D)$ is a normal-crossings pair, and let $\X : = (X,D)^{log}.$ Then $\X^{an}$ is canonically diffeomorphic to the real blow-up $Bl_D^\rr(X)$. 
\end{lm}
  \begin{proof}
Algebraically \'etale locally in $(X),$ the pair $(X,D)$ can be identified with the restriction to an open of the pair $(\aa^n, D^{std})$ for $D^{std}$ the standard normal-crossings divisor. The log-scheme $(\aa^n, D^{std})^{log}$ is the $n$-fold product of $(\aa^1,0)^{log}$ with itself. Now it is a standard fact (see e.g. section 8.2 of \cite{log_geom_mod}) that $\big((\aa^1,0)^{log}\big)^{an}$ is the real blow-up of $\aa^1(\cc)$ at $0$. 
  \end{proof}

\section{Framed formal curves}

\subsection{Definition and properties of $\ffc_{g,n,f}$}
Recall that the stack $\bar{\ffc}_{g,n,f}$ has a closed sub-stack $D : = \bar{\ffc}_{g,n,f}^\V$ classifying families of strictly nodal framed formal curves. Furthermore, the stack $\bar{\ffc}_{g,n,f}$ has a finite \'etale cover by formal schemes in the sense of Appendix \ref{sec:app_formal_log}. Furthermore, in the language of Appendix \ref{sec:formal_log}, the sub-stack $D$ is (\'etale locally) a normal-crossings divisor, i.e.\ defined in the formal neighborhood of any point by the equation $x_1, \cdot, \dots, \cdot x_k = 0$ for a finite set of formal coordinates $x_1,\dots, x_k.$ 
\begin{defi}
  We define the logarithmic stack $$\ffc_{g,n,f} : = \big(\bar{\ffc}_{g,n,f},D\big)^{log},$$ the logarithmic stack associated (via Definition \ref{defi:nc_formal_log}) to the normal-crossings pair $(\bar{\ffc}_{g,n,f}, D)$.
\end{defi}

As they are evidently compatible with the normal-crossings structure, the maps $\glue:\bar{\ffc}_{g,n,f}\times \bar{\ffc}_{g',n',f'}\to \bar{\ffc}_{g+g',n+n', f+f'-2}$ and $\selfglue:\bar{\ffc}_{g,n,f}\to \bar{\ffc}_{g+1, n, f-2}$ extend to maps of log stacks, which we call by the same name:
$$\glue:\ffc_{g,n,f}\times \ffc_{g',n',f'}\to \ffc_{g+g', n+n', f+f'-2},$$
$$\selfglue:\ffc_{g,n,f}\to \ffc_{g+1, n, f-2}.$$

While the definition of the log stacks $\ffc_{g,n,f}$ in language of infinite-dimensional formal orbifolds may seem difficult to access, there is a much more direct way to understand the log stack structure in terms of a pullback of a normal-crossings logarithmic structure of finite type. 

Namely, it follows from Theorem \ref{thm:a6} that $\bar{\ffc}_{g,n,f}^\V$ is canonically the pullback of the normal-crossings boundary divisor along the map $\hour: \bar{\ffc}_{g,n,f}\to \bar{\M}_{g,n+2f}.$ The log structure associated to a normal-crossings divisor is compatible with pullback for formal orbifolds (Lemma \ref{lm:formal_log_fun}), giving us the following result.
\begin{lm}\label{lm:ffc-as-pullback}
  For a (finite-type) scheme $X$ with map $f:X\to \bar{\ffc}_{g,n,f}$, the pullback log structure $\ffc_{g,n,f}\times_{\bar{\ffc}_{g,n,f}}X$ is a log structure on $X$ isomorphic to the pullback of $$\M_{g,n+2f}^{log}\times_{\bar{\M}_{g,n+2f}}X,$$ where $\M_{g,n+2f}^{log}\to \bar{\M}_{g,n+2f}$ is the map from the log scheme $\M_{**}^{log}$ to its underlying scheme and $X\to \bar{\M}_{g,n+2f}$ is the composition $X\to \bar{\ffc}_{g,n,f}\to \bar{\M}_{g,n+2f}.$
\end{lm}
The map $\bar{\ffc}_{g,n,f}\to \bar{\M}_{g,n+2f}$ factors through the formal neighborhood $\bar{\M}_{g,n+2f}^\wedge$ of $\bar{\M}_{g,n}$ (embedded by gluing a copy of $\pp^1$ with three marked points onto $f$ of the marked points). Thus the map of formal schemes $\bar{\ffc}_{g,n,f}\to \M_{g,n+2f}^{log}$ factors through the log scheme $\M_{g,n+2f}^{log, \wedge}$ associated to the formal normal-crossings pair $\M_{g,n+2f}^\wedge, D$ (for $D$ the restriction to $\M_{g,n+2f}^\wedge$ of the boundary divisor on $\M_{g,n+2f}$). From Theorem \ref{thm:a6}, the map on underlying stacks is a torsor for the formal group $\A_{0,1}.$ As the map of log stacks is a pullback of this one, it has the same fibers and thus is also a torsor for the same group (now viewed as a group object in the category of log-stacks). In particular, it follows (though we will not use this fact) that $\ffc_{g,n+2f}$ is the inductive limit of ind-thickenings (in the sense of \cite{log_book}, section IV.2.1) of $\M_{g,n+2f}^\wedge$ (itself an inductive limit of thickenings of the space $\flc_{g,n,f}$ defined in the following section).

\subsection{The reduced schemes $\flc_{g,n,f}$}
Given a log scheme $\X$ on a non-reduced scheme $X$, we define its reduced log subscheme $\X^0$ to be the scheme $\X\times_X X^0$ (for $X^0$ the reduced subscheme of $X$). This procedure is functorial and compatible with pullback (consequently, also with product): it is the right adjoint to the embedding of the category of log schemes with reduced underlying space to the category of all log schemes. We extend this definition (via adjunction) to stacks: a stack $X$ is reduced if every map from a scheme $S$ to $X$ factors through a reduced scheme. 

\begin{defi}
Define $\flc_{g,n,f} : = \ffc_{g,n,f}^0$ to be the log reduced locus of the log scheme $\ffc_{g,n,f}$.
\end{defi}

\begin{prop}
    The log scheme $\flc_{g,n,f}$ has underlying space $\bar{\M}_{g,n+f}$ and log structure canonically given by the restriction of the moduli space $\M_{g, n+2f}^{log}$ of log curves to the closed subscheme $\bar{\M}_{g,n+f}\subset \bar{\M}_{g,n+2f}$ (with embedding, as before, given by gluing $\pp^1$ with three marked points at each of the last $f$ marked points). 
\end{prop}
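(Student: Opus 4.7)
The strategy is to decompose the problem using the hourglass torsor of Theorem \ref{thm:a6} and the pullback characterization of the log structure (Lemma \ref{lm:ffc-as-pullback}), then pass to the reduced locus on both sides. Since reduction is the right adjoint to the inclusion of log schemes with reduced underlying space, it commutes with pullbacks, so these two operations can be carried out in either order.

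\emph{Step 1: identify the underlying scheme of $\flc_{g,n,f}$.} By Theorem \ref{thm:a6}, after restriction to stable loci the hourglass map $\hour:\bar{\ffc}_{g,n,f}\to (\bar{\M}_{g,n+2f})^{\wedge}$ is a torsor for the formal group $\A_{0,1}^f$ over the formal neighborhood of the closed substack $\iota(\bar{\M}_{g,n+f})\hookrightarrow \bar{\M}_{g,n+2f}$ (the embedding gluing a thrice-marked $\pp^1$ to each of the last $f$ marked points). Since $\A_{0,1}$ is a pro-unipotent formal group with a single reduced point (Lemma \ref{lm:a3}) and the reduced locus of $(\bar{\M}_{g,n+2f})^{\wedge}$ is precisely $\iota(\bar{\M}_{g,n+f})$, the reduced locus of the total space of an $\A_{0,1}^f$-torsor over $(\bar{\M}_{g,n+2f})^{\wedge}$ is the reduced locus of the base. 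This identifies the underlying scheme of $\flc_{g,n,f}=\ffc_{g,n,f}^{0}$ with $\bar{\M}_{g,n+f}$.

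\emph{Step 2: identify the log structure.} By Lemma \ref{lm:ffc-as-pullback}, the log structure on $\ffc_{g,n,f}$ is the pullback of the log structure on $\M_{g,n+2f}^{log}$ along the composition $\bar{\ffc}_{g,n,f}\to \bar{\M}_{g,n+2f}$. Restricting to the reduced locus $\bar{\M}_{g,n+f}\subset \bar{\ffc}_{g,n,f}$ identified in Step 1, the composed map is by construction the gluing embedding $\iota:\bar{\M}_{g,n+f}\hookrightarrow \bar{\M}_{g,n+2f}$. Since pullback of log structures commutes with further pullback of underlying schemes, the pullback along this composition agrees with the pullback of $\M_{g,n+2f}^{log}$ along $\iota$. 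This gives the desired description of $\flc_{g,n,f}$ as $\iota^{\ast}\M_{g,n+2f}^{log}$.

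\emph{Expected main obstacle.} The only nontrivial point is the claim that the reduced locus of an $\A_{0,1}^f$-torsor over $(\bar{\M}_{g,n+2f})^{\wedge}$ coincides with the reduced locus of the base. In finite type this is trivial, but $\A_{0,1}$ is an infinite-dimensional formal group, so one must verify that the torsor is locally (on the base) a product of the base with $\A_{0,1}^f$ in a sense compatible with the ind-thickening structures of Appendix \ref{sec:formal_log}. This follows from the explicit local trivialization implicit in Theorem \ref{thm:a6} (and by the analogous local computation already carried out in genus zero in Corollary \ref{cor:a5}), combined with the observation that $\A_{0,1}^f$ has a single reduced point so that $(\bar{B}\times \A_{0,1}^f)^{0}=\bar{B}^{0}$ for any formal base. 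All other steps are routine applications of the universal properties of pullback and reduction.
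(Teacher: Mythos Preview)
Your proposal is correct and follows essentially the same approach as the paper. The paper's own proof is a single sentence invoking Lemma~\ref{lm:ffc-as-pullback}; your argument simply unpacks what that invocation entails, making explicit the identification of the reduced locus via the $\A_{0,1}^f$-torsor structure from Theorem~\ref{thm:a6} and the compatibility of pullback with reduction, which the paper leaves implicit in the discussion preceding the proposition.
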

\begin{proof}
  This is a consequence of Lemma \ref{lm:ffc-as-pullback}.
\end{proof}

Thus the reduced log schemes $\flc_{g,n,f}$ are easy to define simply in terms of moduli spaces. The advantage of defining them in terms of log structure on the very large spaces $\bar{\ffc}_{g,n,f}$ of framed formal curves is that they now immediately inherit gluing operations $\glue$ and $\selfglue$ and compatibilities between them.

\section{Analytification of $\flc_{g,n,f}$, universal families, and the KSV construction}\label{sec:univfam}

Our goal of this section is to exhibit some kind of geometric moduli problem solved by the analytifications $\flc_{g,n,f}^{an}$, and deduce a comparison to the Kimura-Stasheff-Voronov spaces and their gluing. 

\subsection{Geometric realization of $\M_{g,n}^{log}$}\label{sec:smoothblowup}
We begin by sketching a geometric picture of the geometric orbifold $(\M_{g,n}^{log})^{an}$, which is well-known. Namely, as $\M_{g,n}^{log}$ comes from the normal-crossings pair $(\bar{\M}_{g,n}, \bar{\M}_{g,n}^\V)$, its analytification is the real blow-up of $\bar{\M}_{g,n}^\V.$ The fiber of the map $(\M_{g,n}^{log})^{an}\to \bar{\M}_{g,n}^{an}$ (underlying this blow-up) over a point $m\in \bar{\M}_{g,n}^{an}$ (classifying the complex curve $X$) is diffeomorphic to the real torus $T: =(S^1)^k,$ where $k$ is the number of nodes in $X$. This identification is not canonical, and the analytification of the universal family $\taut:\M_{g,n+1}^{log}\to \M_{g,n}^{log}$ gives a nice canonical interpretation of $T$. Namely, let $\tilde{X}$ be the normalization of $X$, and let $\nu_1, \nu_1', \nu_2, \nu_2', \dots, \nu_k, \nu_k'$ be the pairs of points of $\tilde{X}$ in preimages of each node (note that there is no canonical order on the nodes or the preimages, so both our choice of indices and our choice of which preimage to call $\nu_i$ or $\nu_i'$ are arbitrary). Let $x_1,\dots, x_n\in \tilde{X}$ the (unique) preimages of the marked points. Let $Bl \tilde{X}$ be the blow-up of $\tilde{X}$ at all $\nu_i, \nu_i'$ and all $x_j$. The space $Bl\tilde{X}$ is a (complex) surface with $n+2k$ boundary components $B\nu_i, B\nu_i', Bx_j$, each canonically a torsor over $S^1$ (corresponding to complex action on tangent directions). Then $T$ can be understood as the torus whose points are collections, for each pair $\nu_i, \nu_i',$ of \emph{anti-equivariant} (with respect to the two $S^1$ actions) identifications of $B\nu_i$ and $B\nu_i'$. To each such set of identifications, $t\in T$, we associate the curve $X_{x, t}$ obtained from $\tilde{X}$ by gluing along the identifications defined by $t$ --- this is the fiber of the analytification of the tautological map $\taut^{an}: (\M_{g,n+1}^{log})^{an}\to (\M_{g,n}^{log})^{an}.$ Note that this fiber blows up every marked point and smoothly ``resolves'' every nodal point by blowing up its two normal preimages, then gluing. This provides a nice geometric intuition for the fact that the universal family of logarithmic curves over $\M_{g,n}^{log}$ given by $\taut$ is \emph{smooth} (see \cite{log_book}, section IV.1). 

\subsection{Universal families for $\flc_{*,*,*}$}\label{univ_fam_log}
Now $\M_{g,n}^{log}$ is canonically isomorphic to $\ffc_{g,n,0},$ which is reduced, hence also isomorphic to $\flc_{g,n,0}.$ The two universal families $\taut: \bar{\ffc}_{g,n+1,f}\to \bar{\ffc}_{g,n,f}$ and $\rotor: \bar{\ffc}_{g,n,f+1}\to \bar{\ffc}_{g,n+1, f}$ from Section \ref{sec:univfam} are evidently compatible with normal-crossings structure, thus define maps (denoted the same way) $$\taut:\ffc_{g,n+1,f}\to \ffc_{g,n,f}$$ $$\rotor: \ffc_{g,n,f+1}\to \ffc_{g,n+1,f}.$$ These have reduced analogues, $$\taut^0:\flc_{g,n+1, f}\to \flc_{g,n,f}$$ $$\rotor^0:\flc_{g,n,f+1}\to \flc_{g,n+1, f}.$$ The tautological maps $\taut:\M_{g,n+1}^{log}\to \M_{g,n}^{log}$ get taken under these isomorphisms to the maps $\flc_{g,n+1,0}\to \flc_{g,n,0}.$ Now (see Section \ref{sec:univfam}) we can view $\ffc_{g,n,f}$ as a family over $\ffc_{g,n+f, 0}\cong\M_{g,n+f}^{log}$ via the chained rotor map $\rotor_{1,\dots, f}: \ffc_{g,n,f}\to \ffc_{g,n+f,0},$ and similarly view $\flc_{g,n,f}$ as a family over $\flc_{g,n+f,0}\cong \M_{g,n+f}^{log}$ via the reduced version, $\rotor_{1,\dots, f}^0.$ Now fibers of the rotor map over a marked curve $(X, \{x_1,\dots, x_n\}, \{y_1,\dots, y_n\})$ (over a scheme $S$) are $\A_{0,1}^f$-torsors over the product of formal neighborhoods $\hat{y}_1\times \dots \times \hat{y}_f$. The normal-crossings structure on the fiber of $\bar{\ffc}_{g,n,f}$ over $X$ is then induced (by pulling back along the torsor) from the standard normal-crossings structure $(\hat{y}_i, y_i)$ on each formal neighborhood, multiplied by the boundary normal-crossings structure on $\bar{\M}_{g,n+f};$ an analogous statement holds true about the log structure on $\M_{g,n,f}$. This means that fibers of the reduced map $\rotor_{1,\dots, f}^0:\flc_{g,n,f}\to \M_{g,n+f}^{log}$ are torsors by the reduced group stack underlying $\A_{0,1}$, which is the trivial group stack, over the reduced log scheme underlying $\prod (\hat{y}_i, y_i)^{log},$ which is the relative log point associated to the line bundle $T_{y_i}$ on $\M_{g,n+f}$ given by the tangent line at $y_i.$ 

\subsection{Geometric moduli realization of $\flc_{g,n,f}^{an}$}\label{geom_realn}
As we have a good geometric understanding for the ``classification problem'' solved by the analytication $\flc_{g,n,0} \cong \M_{g,n}^{log},$ in order to get a good interpretation of $(\flc_{g,n,f}^{log})^{an}$ it remains to geometrically understand the fibrations $\rotor_{1,\dots, f}: \flc_{g,n,f}\to \flc_{g,n+f,0}$. We see from \ref{lm:firstrotor} that, up to inductive thickenings (which are ignored by $\flc_{*,*,*}$) the fiber of $\rotor_i$ is canonically isomorphic to the relative log point on $\M_{g,n+1,f-1}^{log}$ classified by the line bundle $Tx_i$ tangent to the $i$th marked point. In particular, we deduce the following result.
 \begin{thm}
Via the map $\flc_{g,n,f}^{log}\to \M_{g,n+f}^{log},$ the fiber of the space $\flc_{g,n,f}$ over an $S$-point, $X\in \M_{g,n+f}^{log}$ is canonically isomorphic to the iterated log point associated to the collection of line bundles $(Tx_1, \dots, Tx_f)$ given by tangent bundles of the sections $x_1,\dots, x_f$. 
\end{thm}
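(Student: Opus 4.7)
The plan is to exhibit the fiber via the chained rotor fibration $\rotor_{1,\dots,f}^0\colon \flc_{g,n,f}\to \M_{g,n+f}^{log}$ and then to identify it on the nose using the description of single rotor fibers from Lemma~\ref{lm:firstrotor} together with passage to the reduced log stack.

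First I would work one framing at a time. Fix $i$ and consider the single rotor $\rotor_i\colon \bar{\ffc}_{g,n,f+1}\to \bar{\ffc}_{g,n+1,f}$. By Lemma~\ref{lm:firstrotor} a lifting of an $S$-point $X\to \bar{\ffc}_{g,n+1,f}$ to $\bar{\ffc}_{g,n,f+1}$ is the choice of a section $y$ of the formal neighborhood $\hat{x}_{n+1}$ of the $(n+1)$-st marked point together with an $(\aa^1;0,1)$-parametrization of the new blown-out component; the second piece of data is a principal $\A_{0,1}$-bundle over the first. Thus as a formal stack the fiber of $\rotor_i$ is an $\A_{0,1}$-torsor over $\hat{x}_{n+1}$. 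The new node appears exactly when $y=x_{n+1}$, so by Lemma~\ref{lm:ffc-as-pullback} the log structure on this fiber is the pullback along the torsor of the normal-crossings log structure on $(\hat{x}_{n+1},x_{n+1})$.

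Next I would pass to the reduced log substack $\flc$. The group $\A_{0,1}$ is a formal Lie group with a single reduced point (Lemma~\ref{lm:a3}), so its underlying reduced stack is trivial and an $\A_{0,1}$-torsor becomes trivial after reducing. Consequently the fiber of the reduced rotor $\rotor_i^0$ is canonically the reduced log scheme underlying $(\hat{x}_{n+1},x_{n+1})^{log}$. But the reduction of the log scheme $(\hat{x}_{n+1},x_{n+1})^{log}$ in a family over $\bar{\M}_{g,(n+1)+f}$ is by definition the relative log point associated to the tangent line bundle $Tx_{n+1}$: indeed, writing the formal disk as $\spf(\operatorname{Sym}(Tx_{n+1}^\vee)^\wedge)$ identifies its log structure at the zero section with the relative log structure of Definition of the relative log point.

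Finally I would iterate. Chaining together the $f$ rotors expresses $\rotor_{1,\dots,f}^0$ as an $f$-fold composition of reduced rotors, and the argument above applied successively identifies the fiber over $X\in \M_{g,n+f}^{log}(S)$ with the (iterated) fibered product over $S$ of the relative log points associated to $Tx_1,\dots,Tx_f$, which is precisely the iterated log point $(S;Tx_1,\dots,Tx_f)^{log}$. Canonicity in $S$ follows because the identifications at each stage are canonical (no choice of order of framings enters, since the rotor compositions commute up to canonical isomorphism, as already observed when the $\glue$ and $\selfglue$ operations were shown to be order-independent). The main technical point — and the only real content beyond bookkeeping — is the assertion that the $\A_{0,1}$-torsor trivializes after log reduction; this rests on $\A_{0,1}$ being a purely formal group, and once this is granted the rest is a direct unwinding of Lemma~\ref{lm:firstrotor} and Lemma~\ref{lm:ffc-as-pullback}.
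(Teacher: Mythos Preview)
Your proposal is correct and follows essentially the same approach as the paper: you use Lemma~\ref{lm:firstrotor} to identify the fiber of a single rotor as an $\A_{0,1}$-torsor over $(\hat{x}_{n+1},x_{n+1})^{log}$, observe that the reduced group underlying $\A_{0,1}$ is trivial so the torsor trivializes after reduction, identify the reduced log scheme of $(\hat{x}_{n+1},x_{n+1})^{log}$ with the relative log point for $Tx_{n+1}$, and then iterate over the $f$ rotors. This is exactly what the paper does in the paragraph preceding the theorem (and in the earlier discussion of the chained rotor in Section~\ref{univ_fam_log}); the only cosmetic difference is that the paper sometimes phrases the argument in terms of the full chained rotor $\rotor_{1,\dots,f}$ at once rather than one step at a time.
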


After taking geometric realizations, then, the topological space $\M_{g,n,f}$ classifies the following data. 
\begin{enumerate}
\item A nodal marked curve $X^\nu.$
\item A resolution of each node by real blow-ups, as in section \ref{sec:smoothblowup}. 
\item\label{egaw3} A basepoint of the real blow-up of $X$ at each of the $f$ marked points corresponding to a ``framing'' (equivalently, a parametrization of the boundary component by $S^1$ compatible with the standard $S^1$-torsor structure of a real blow-up of a smooth one-dimensional complex variety).
\end{enumerate}

Note that given a pair of curves $X, X'$ as above corresponding to points of the analytic realization of $\flc_{g,n,f}, \flc_{g',n', f'}$ (respectively, a single curve $X$) any two framings can be glued in a unique way that identifies the basepoints (picked out in part \ref{egaw3} above) and is anti-equivariant for the $S^1$ action, to produce a new curve with precisely the data necessary to identify a point in $\flc_{g'', n'', f''}$ where $(g'', n'', f'') = (g+g', n+n', f+f'-2)$ (respectively $(g+1, n, f-2)$). Call the resulting curve $\glue^{geom}(X, X')$ (resp., $\selfglue^{geom}(X)$). Note that these operations were defined on the level of moduli, in an evidently continuous fashion, therefore give maps of topological orbifolds. 

The following lemma should come as no surprise.
\begin{lm}
The analytic realizations of $\glue:\flc_{g,n,f}\times \flc_{g', n', f'}\to \flc_{g+g', n+n', f+f'-2}$, respectively, $\selfglue:\flc_{g,n,f}\to \flc_{g+1, n, f-2}$ area equivalent (as maps of topological orbifolds) to $\glue^{geom}, \selfglue^{geom},$ respectively (as defined above). 
\end{lm}
\begin{proof} 
By looking in a formal neighborhood, it is enough to prove this lemma for gluing two annuli. Now the semigroup stack $\bar{\ann}$ is the neighborhood as $0$ of the monoid stack $\bar{\A}_m$, which is an ind-thickening of the monoid scheme $\aa^1$ (under multiplication). Thus the gluing operation on $\flc_{0,0,2}$ is no other than the semigroup structure on the log point $\pt_{log}$, with geometric realization given by multiplication on the circle. The lemma follows. 
\end{proof}

Now the spaces $\flc_{g,n,f}^{an}$ we described, together with their gluing maps, have been defined in the literature before. These are \emph{Kimura-Stasheff-Voronov} (KSV) spaces defined ans studied in \cite{ksv}. In particular, in \cite{ksv} it is proven that these spaces together with their gluing maps are linked by a chain of homotopy equivalences to the spaces of \emph{framed conformal surfaces}: oriented conformal surfaces (equivalently, complex surfaces) with boundary, and with every boundary component parametrized by $S^1$ in an analytic way. (This can alternatively be proved by techniques of \cite{oancea_vaintrob}.)

Some corollaries: 
\begin{cor}
The geometric realization of the stack-valued operad $O$ with $O_n : = \sqcup \flc_{g,0,f+1}$ and composition given by gluing is linked by a chain of homotopy equivalences to the bordism modular operad $\mathrm{Bord}^{2,1}$. (See \ref{sec:app_mod_op} for details.)
\end{cor}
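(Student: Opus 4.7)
The plan is to assemble the corollary out of three inputs already at hand: (i) the explicit moduli description of the points of $\flc_{g,n,f}^{an}$ established in Section \ref{geom_realn}, (ii) the identification of the analytic gluing maps $\glue^{an}, \selfglue^{an}$ with the geometric gluing maps $\glue^{geom}, \selfglue^{geom}$ proved in the preceding lemma, and (iii) the theorem of Kimura--Stasheff--Voronov that the resulting topological modular operad is equivalent to the framed conformal (equivalently, oriented Riemann) surface modular operad. Because the modular operad $\mathrm{Bord}^{2,1}$ is built (at least up to homotopy, see Appendix~\ref{sec:app_mod_op}) out of oriented surfaces with $S^1$-parametrized boundary and their gluings, this will yield the desired chain of equivalences.

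Concretely, first I would unpack the operad $O$ explicitly: its $n$-ary space is the disjoint union $\bigsqcup_g \flc_{g,0,n+1}^{an}$, and by the moduli description of $\flc_{g,0,f}^{an}$ the points of this space classify a nodal genus-$g$ curve together with a real resolution of every node and a basepoint on the $S^1$-boundary associated to each of the $n+1$ markings (one ``output'' and $n$ ``inputs''). The compositions are given by $\glue$ and $\selfglue$, whose analytifications, by the lemma above, are precisely the antiequivariant basepoint-matching gluings on parametrized boundaries. This is exactly the definition of the KSV operad, with the single difference that nodal (pinched) surfaces are allowed in the KSV model.

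Next I would invoke the main comparison of \cite{ksv}: the inclusion of smooth parametrized-boundary surfaces into nodal ones is a homotopy equivalence compatible with the gluing structure, and the resulting topological modular operad is homotopy equivalent to the modular operad of oriented conformal surfaces with analytically parametrized boundary --- that is, $\mathrm{Bord}^{2,1}$. Assembling this with the previous two steps produces a zig-zag of modular-operad equivalences $O^{an}\simeq \mathrm{KSV}\simeq \mathrm{Bord}^{2,1}$. The orbifold structure on the $\flc_{g,0,n+1}$ matches the mapping-class-group isotropy of $\mathrm{Bord}^{2,1}$ automatically, since the moduli interpretation passes through $\bar{\M}_{g,n+f}$ which carries precisely these automorphisms.

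The main obstacle, and the reason for the pointer to Appendix~\ref{sec:app_mod_op}, is not any single homotopy equivalence (each is already in the literature or follows immediately from the moduli description) but rather checking that all of these equivalences are compatible with the full \emph{modular} operad structure --- i.e.\ that $\selfglue$ and the disjoint-union/permutation structure match on the nose up to coherent homotopy, and that the passage from stack-valued operads to topological operads of orbifolds preserves this structure. This requires the general framework of modular operads in log orbifolds set up in that appendix, after which the corollary reduces to a formal bookkeeping argument combining the three ingredients above.
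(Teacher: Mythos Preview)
Your proposal is correct and follows essentially the same route as the paper: the corollary is treated as an immediate consequence of the moduli description of $\flc_{g,0,f}^{an}$, the preceding lemma identifying $\glue^{an},\selfglue^{an}$ with $\glue^{geom},\selfglue^{geom}$, and the Kimura--Stasheff--Voronov comparison with framed conformal surfaces, with the modular-operad bookkeeping deferred to Appendix~\ref{sec:app_mod_op}. The paper does not supply any argument beyond this assembly.
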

In particular, this concludes the proof of Theorem \ref{thm:log_bord} from the introduction. 
\begin{cor}\label{cor:log_fld}
The geometric realization of the (ordinary, resp., cyclic) operad formed by gluing maps between $\flc_{0,0,f}$ is linked by a chain of homotopy equivalences to the (ordinary, resp., cyclic) operad of framed little disks, $FrE_2^{cyc}$. 
\end{cor}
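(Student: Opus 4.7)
The plan is to deduce this corollary from the previous one by restricting to genus zero, and then invoke the classical equivalence between the genus-zero piece of the bordism operad and framed little disks.

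First, I would specialize the geometric moduli realization of $\flc_{g,n,f}^{an}$ from Section \ref{geom_realn} to the case $g=0$, $n=0$. By that description, a point of $\flc_{0,0,f}^{an}$ classifies a stable nodal genus-zero curve $X^\nu$ together with a resolution of each node and, at each of the $f$ marked framings, a basepoint on the real blow-up (equivalently, an $S^1$-parametrization of the corresponding boundary circle compatible with the canonical $S^1$-torsor structure). The gluing map $\glue:\flc_{0,0,f}\times \flc_{0,0,f'}\to \flc_{0,0,f+f'-2}$ under analytification was identified in the lemma just above with the geometric gluing $\glue^{geom}$, which matches boundary parametrizations antiequivariantly. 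Thus the operad structure on the collection $\{\flc_{0,0,f+1}^{an}\}$ agrees with the genus-zero piece of the bordism-style gluing operad studied in the preceding corollary.

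Next, I would appeal to the chain of homotopy equivalences, already used in the previous corollary (and constructed in \cite{ksv} and \cite{oancea_vaintrob}), between the moduli of nodal complex surfaces with parametrized boundary (with resolution/basepoint data) and the moduli of smooth conformal surfaces with parametrized boundary. In genus zero this identifies $\flc_{0,0,f}^{an}$ up to homotopy with the space of conformal genus-zero surfaces with $f$ analytically parametrized boundary circles, compatibly with gluing. Designating one of the $f$ boundary components as the output and the remaining $f-1$ as inputs endows this collection with (ordinary) operad structure, and keeping all $f$ symmetric gives the cyclic structure; these are precisely the data collated by the genus-zero piece $\mathrm{Bord}^{2,1}|_{g=0}$.

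Finally, I would invoke the classical equivalence between the operad of framed genus-zero conformal surfaces and the operad $FrE_2^{cyc}$ of framed little disks: given such a surface with one distinguished output boundary, the uniformization / Riemann mapping theorem conformally embeds it into a standard disk with $f-1$ small sub-disks removed, and the boundary parametrizations record the framings of each sub-disk together with the output framing. This correspondence is a homotopy equivalence of (cyclic) operads; a detailed exposition in precisely this formulation appears in \cite{ksv} (Theorem 4.3 there), and a purely real-analytic version is also contained in \cite{oancea_vaintrob}. Composing these chains of homotopy equivalences yields the statement.

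The main obstacle is not the pointwise homotopy equivalence, which is essentially Riemann mapping, but verifying that it respects the operad/cyclic operad structure through the various models (logarithmic real blow-up, KSV, conformal bordism, little disks). Since this compatibility was already carried out at each step in \cite{ksv} and reproved analytically in \cite{oancea_vaintrob}, the work consists of stringing these compatibilities together rather than proving anything new; one must only check that the gluing operation on $\flc_{0,0,f}^{an}$ described via $\glue^{geom}$ (antiequivariant identification of basepointed boundary $S^1$'s with blow-up of nodes) corresponds under the KSV equivalence to the gluing of conformal surfaces along parametrized boundaries, which is immediate from the lemma preceding the corollary.
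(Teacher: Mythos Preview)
Your proposal is correct and follows essentially the same route as the paper: the corollary is stated there without a separate proof, as an immediate consequence of the identification of $\flc_{g,n,f}^{an}$ (with $\glue^{geom}$) with the KSV spaces, together with the result of \cite{ksv} relating these to framed conformal surfaces; restricting to genus zero gives framed little disks. Your write-up simply unpacks this implicit argument in more detail, and the extra step you describe (uniformization to pass from genus-zero conformal surfaces with parametrized boundary to configurations of framed disks) is exactly what is hidden inside the citation of \cite{ksv}.
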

In particular, we have proven Theorem \ref{thm:log_fld} from the introduction, with $FLD^{log}$ the operad with spaces $FLD^{log}_n : = \ffc_{0,0,n+1}$ and operad structure given by glueing.
\begin{cor}
The geometric realization of the category fibered in stacks with objects indexed by positive numbers $\langle n\rangle$ and with $\hom(\langle n\rangle, \langle m\rangle)$ given by $\sqcup_g \flc_{g,0,m+n}$ (and composition given by gluing) is linked by a chain of homotopy equivalences to the Segal category, defined in \cite{segal} and responsible for ``topologogical conformal field theory''. 
\end{cor}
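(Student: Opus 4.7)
The plan is to bootstrap from the preceding operadic corollary and from the universal-family description in Section \ref{geom_realn} to a full comparison of categorical structures, essentially by extending the identification of boundary data from the operadic one-output case to Segal's fully bicolored situation. The starting observation is that for every $(n,m)$, the analytic realization $\sqcup_g \flc_{g,0,n+m}^{an}$ classifies, up to the KSV identification, nodal complex curves of genus $g$ equipped with real blow-ups of their nodes and parametrizations with basepoints of $n+m$ distinguished boundary circles. Choosing an ordered partition of these $n+m$ circles into $n$ ``incoming'' and $m$ ``outgoing'' components puts this space in bijection with the morphism stack of Segal's category between $\langle n\rangle$ and $\langle m\rangle$.

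First I would, for each $(n,m)$, apply the chain of homotopy equivalences from \cite{ksv} recalled in Section \ref{geom_realn} to obtain a zigzag of weak equivalences between $\sqcup_g \flc_{g,0,n+m}^{an}$ and the Segal space of framed conformal surfaces with $n$ incoming and $m$ outgoing boundary circles. Next I would check that these zigzags intertwine the two notions of composition: Segal composition is the anti-equivariant identification of the $m$ outgoing circles of the first surface with the $m$ incoming circles of the second, which by the final lemma of Section \ref{geom_realn} corresponds exactly to iterated application of $\glue$ on our side. For boundary-identifications within one surface (needed for closed genus contributions that appear under composition when loops form), the same lemma applied to $\selfglue$ suffices. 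Identities are then produced from the unital monoidal extension $\bar{\A}_m$ of Section \ref{sec:a8.2}: Segal's identity morphisms are limits of ``infinitely thin cylinders'', which match precisely the unit of $\bar{\A}_m$ (the identity in $\A_m = \aut^{conn}(\gg_m)$).

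The principal obstacle is coherence. Segal's category is strictly associative but only unital up to thin-cylinder reparametrizations, while our gluing on $\flc_{g,0,*}$ is strictly associative by construction but only becomes unital after passage to $\bar{\A}_m$; moreover the whole comparison needs to be formulated up to orbifold structure inherited through analytification of the log stacks $\flc_{g,0,n+m}$. The cleanest way to handle this is to formulate the comparison not as an isomorphism of topological categories but as a zigzag of weak equivalences of Segal-style $\Delta$-indexed diagrams (equivalently, topologically enriched categories with invertible associativity and unit coherences). This is essentially bookkeeping: no new geometric input is required beyond the comparisons used in the bordism and framed-little-disks corollaries above, and once the coherences are arranged the desired equivalence with Segal's category follows.
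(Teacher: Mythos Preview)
Your proposal is correct and follows the same route as the paper, which in fact gives no proof at all for this corollary: it is listed among several immediate consequences of the identification of $\flc_{g,n,f}^{an}$ (together with the gluing maps) with the KSV spaces and of the KSV result linking those to framed conformal surfaces. Your write-up is therefore more detailed than what the paper supplies; the coherence and unit discussion you include is reasonable bookkeeping the paper leaves implicit.
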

\begin{cor}
The modular operad formed by the $\flc_{g,0,f}$ under gluing (see Appendix \ref{sec:app_mod_op}) has geometric realization equivalent to the modular operad formed by complex surfaces with parametrized boundary under gluing.
\end{cor}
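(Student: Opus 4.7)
The plan is to bootstrap from the previous corollary (and the lemma identifying analytic realizations of $\glue$ and $\selfglue$ with $\glue^{\mathrm{geom}}$ and $\selfglue^{\mathrm{geom}}$) to a statement about the full modular operad structure, using that a modular operad is determined by its underlying collection of spaces together with its gluing and self-gluing maps, subject to compatibilities already built into the moduli-theoretic description of the $\flc_{g,0,f}$.

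First I would recall that, by the results of Section \ref{geom_realn}, the analytification $\flc_{g,0,f}^{an}$ classifies closed nodal complex curves of genus $g$ with $f$ boundary components, where each node is ``resolved'' by a real blow-up with anti-equivariant $S^1$-identification and each boundary component carries a distinguished basepoint on its real blow-up (equivalently, an $S^1$-parametrization of the boundary circle). This is exactly the moduli datum for (possibly degenerate) framed conformal surfaces of type $(g,f)$. The chain of homotopy equivalences from \cite{ksv} (or alternatively from \cite{oancea_vaintrob}) then identifies, in each fixed $(g,f)$, the space $\flc_{g,0,f}^{an}$ with the space of genuine (smooth) framed conformal surfaces of the same topological type.

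Next I would check that these level-wise homotopy equivalences intertwine all of the structure maps of the two modular operads. There are two classes of structure maps to check: the binary gluings $\glue:\flc_{g,0,f}\times \flc_{g',0,f'}\to \flc_{g+g',0,f+f'-2}$ and the self-gluings $\selfglue:\flc_{g,0,f}\to \flc_{g+1,0,f-2}$. The previous lemma already established that the analytifications of both classes of maps coincide with the naive topological gluing operations $\glue^{\mathrm{geom}}$ and $\selfglue^{\mathrm{geom}}$, which glue two boundary circles in an anti-equivariant way matching basepoints --- precisely the boundary-gluing operation used to define the modular operad of framed conformal surfaces. Because both classes of maps are expressible \'etale-locally in terms of the semigroup structure on the log point $\pt_{log}$ (whose analytification is $S^1$ with multiplication), the compatibility between algebraic and topological gluings is an automatic consequence of the annulus case treated in the lemma.

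The remaining point is functoriality of the identification with respect to the symmetry and associativity data of a modular operad: symmetric group actions permuting the $f$ boundary components, compatibilities of iterated gluings, and independence of the order in which several self-gluings are performed. All of these hold on the algebraic side by construction (they were established as part of the order-independence of stable gluing in Section \ref{sec:a8.1} and the locality properties of $\selfglue$), and on the topological side they hold tautologically; the analytification functor carries one to the other. The main (essentially the only nonformal) obstacle is the passage from the moduli-theoretic model of framed surfaces given by $\flc_{g,0,f}^{an}$ --- where the ``surface'' is implicit in a nodal curve plus resolution data --- to the space of honest smooth surfaces with boundary parametrization, but this is exactly the content of the KSV theorem cited, so invoking it concludes the argument.
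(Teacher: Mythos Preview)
Your proposal is correct and follows the same route as the paper: the paper states this corollary without proof, as one of several immediate consequences of the preceding lemma (identifying the analytifications of $\glue$ and $\selfglue$ with $\glue^{\mathrm{geom}}$ and $\selfglue^{\mathrm{geom}}$) together with the KSV comparison from \cite{ksv}. Your write-up simply makes explicit the reasoning the paper leaves implicit, and the decomposition into level-wise identification, compatibility of gluing maps, and modular-operad coherence is exactly what is intended.
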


\section{Applications}
We have seen that Theorems \ref{thm:log_fld} and \ref{thm:log_bord} follow from identifying operad structures given by gluing on $\ffc_{*,0, *}$ with the corresponding topological operads (either $\mathrm{Bord}^{2,1}$ or $FLD$) via a chain of quasiisomorphisms. It remains to deduce a log motivic structure on the operad $LD$ of little disks, and to use the motivic structures to produce weight splitting and (in the genus zero cases of $LD$ and $FDL$), formality. 
\subsection{Little disks from framed little disks}
Let $G$ be a (unital) monoid. Let $\C_G$ be the category with a single object, $\Theta,$ with morphisms $G$, and let $\C_G^\sqcup$ be the free symmetric monoidal category (under the operation $\sqcup$) generated by $\C_G$. Its objects are $\Theta^{\sqcup n},$ and morphisms generated by $\hom(\Theta^{\sqcup n}, \Theta) : = G^n$. This category is canonically the symmetric monoidal category of an operad, which we call $\comm_G,$ with $(\comm_G)_n: = \hom_{\C_G^{\sqcup}}(\Theta^{\sqcup n}, \Theta)\cong G^n$ (permutation of factors then comes from symmetric monoidal structure, and composition is induced by composition in $\C_G^\sqcup$). This construction is a purely formal one, and generalizes for $G$ a monoid object in any symmetric monoidal category. If $G = *$ is the trivial group (in any symmetric monoidal category), then $\comm_G$ is the terminal operad $\comm$, with one operation in every degree ($E_\infty$ is a standard resolution of this operad in topological spaces). More generally, $\comm_G$ is the operad that encodes the structure of $G$-equivariant commutative algebra. Functoriality gives us a map $\comm\to \comm_G$, given by mapping to the $n\to 1$ operation in $\comm_G$ given by the tuple $(1, \dots, 1)\in G^n$. Now if $O$ is an operad with equivariance by a groupoid $G$, the semidirect product operad (see \cite{gs_formality}) has map to $\comm_G.$ Rather than rigorously define semidirect product of operads, we explain the relevant example, in topological operads. Let $G=S^1$ and let $FLD$ be the operad of framed little disks (which is the semidirect product of $LD$ by the evident $S^1$-equivariant structure). Then given a configuration of framed little disks in $FLD_n,$ one can forget the disks and remember only their orientations, as $n$ elements of $S^1$. The resulting map of spaces $\angle:FLD_n\to (S^1)^n$ then evidently combines to an ``orientation'' map of operads $$\rho:FLD\to \comm_{S^1}.$$ Note that the little disks operad $LD$ is the pullback of the diagram
\begin{equation}\label{diag:fld_ld}
  \begin{tikzcd}
    FLD \arrow[r]& \comm_{S^1}\\
    & \comm \arrow[u]
  \end{tikzcd}
\end{equation}
Now the map $FLD_n\to (\comm_{S^1})_n$ is a Serre fibration at each level and the functor from operads to graded spaces detects fibrancy (in the standard Berger-Moerdijk model structure, \cite{berger-moerdijk}). Thus the above diagram is a homotopy pullback diagram for the operad $LD$. We can therefore (see Appendix \ref{sec:mot-inv}) define a motive with Betti model canonically equivalent to $LD$ if we can construct a diagram of log operads with geometric realization equivalent to the above pullback diagram.

We realize this diagram in the category of log schemes as follows. The operad $\comm_{S^1}$ is represented by the log operad $\comm_{\ann^\sim}$, with $\ann^{sim}$ the unitally extended monoid of annuli (the log version of section \ref{sec:a8.2}). For each space $FLD_n^{log} = \ffc_{0,0,n+1}$ and index $1\le i\le n,$ define $\angle_i:\ffc_{0,0,n+1}\to \ann^\sim$ as follows. When $n = 1$ (and so $i = 1$), take $\angle_i: FLD_1^{log}\to \ann^\sim$ to be the identity map. When $n\ge 2,$ define $$\angle_i : = \forg_{1,\dots, \hat{i}, \dots, \hat{n+1}}\circ \rotor_{1,\dots, \hat{i}, \dots, \hat{n+1}},$$ the map represented by gluing in an \emph{unmarked} copy of $\aa^1$ (with standard coframing) into each framing except the $i$th and the ``outgoing'' $n+1$st, producing an annulus. The maps $\angle_i$ morally ``measure'' the angle between the incoming $i$th boundary and the outgoing $n+1$st. In particular they satisfy the following evident identity, for an operad composition (with $X\in FLD^{log}_m, Y\in FLD^{log}_n,$ over an arbitrary log base): $\angle_j(X\circ_i Y) = \angle_j(X)\cdot \angle_i(Y),$ where $i\le n, j\le m,$ and composition of angles is given by the monoid structure on $\ann^\sim.$ 

From this we deduce that the maps $$\angle: = (\angle_1, \dots, \angle_n):FLD^{log}_n\to (\ann^\sim)^n$$ combine to form a map of log operads $\angle:FLD^{log}\to \comm_{\ann}.$ Tracing through geometric models, we see this map is equivalent after taking geometric realizations to the map $FLD\to \comm_{S^1}.$ Thus we have proven the following result.
\begin{thm}\label{thm:fld_ld_lim}
The diagram of log operads 
\begin{equation}\label{diag:fld_ld_log}
  \begin{tikzcd}
    FLD^{log} \arrow[r, "\angle"]& \comm_{\ann^\sim}\\
    & \comm \arrow[u]
  \end{tikzcd}
\end{equation}
has homotopy pullback of geometric realizations equivalent to the operad $LD$ of little disks. 
\end{thm}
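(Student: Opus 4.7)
The plan is to reduce the statement to the topological diagram \eqref{diag:fld_ld}, which we already know presents $LD$ as a homotopy pullback. Concretely, I will identify each vertex and edge of the log-operadic diagram \eqref{diag:fld_ld_log} with its topological counterpart in \eqref{diag:fld_ld} after applying Kato--Nakayama analytification, and then invoke that analytification preserves the relevant homotopy pullback.

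First I would handle the right-hand vertex. By the description of $\bar{\A}_m$ given after Definition \ref{defi:a16}, the underlying reduced scheme of the monoid $\ann^\sim$ is the multiplicative monoid $\aa^1$ (with the log structure pulled back from the normal-crossings pair $(\aa^1,0)$, up to the infinite-dimensional non-reduced thickenings contributed by $\A_m^\pm$). The results of the preceding section (together with the standard fact that $((\aa^1,0)^{log})^{an}$ is the real blow-up of $\aa^1(\cc)$ at the origin, compatibly with multiplication) then show that $(\ann^\sim)^{an}$ is equivalent, as a topological monoid, to $\cc$ with the unit circle $S^1$ as a deformation retract; thickenings by $\A_m^\pm$ are formal and therefore invisible to the analytification. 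Hence $(\comm_{\ann^\sim})^{an}\simeq \comm_{S^1}$ as operads in topological monoids. The left-hand vertex $(FLD^{log})^{an}\simeq FLD$ is exactly Corollary \ref{cor:log_fld}, and the bottom vertex $\comm$ is by definition carried to itself.

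Next I would identify the horizontal arrow. By construction, $\angle_i: \ffc_{0,0,n+1}\to \ann^\sim$ is obtained by first filling in, via $\rotor$, all boundary components except the $i$th incoming and the outgoing one, then passing to the resulting annulus, whose monoid class records the twist between the two remaining framings. Under the geometric realization described in Section \ref{geom_realn}, the data classified by $\ffc_{0,0,n+1}^{an}$ is exactly a framed configuration of disks, and the analytic realization of the fill-and-take-annulus construction measures precisely the $S^1$-angle between the two remaining framings. This matches the operadic orientation map $\rho$ of \eqref{diag:fld_ld} on the nose. The vertical arrow $\comm\to \comm_{\ann^\sim}$ analytifies to the canonical inclusion $\comm\hookrightarrow \comm_{S^1}$ picking out the identity of $S^1$ in each operadic arity, since the log unit of $\ann^\sim$ realizes to $1\in S^1$.

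Finally I would check that analytification commutes with the homotopy pullback in question. Since the statement of the theorem concerns the \emph{homotopy} pullback of the realized diagram, it suffices to check arity-wise that the map $\angle^{an}: FLD_n\to (S^1)^n$ is a Serre fibration --- which is exactly the input already invoked right before \eqref{diag:fld_ld} to identify $LD$ with the strict pullback of \eqref{diag:fld_ld}. Combining the three identifications above with this fibrancy statement yields an equivalence of diagrams and hence of their homotopy pullbacks. The main obstacle I anticipate is making the equivalence $(\ann^\sim)^{an}\simeq S^1$ genuinely respect monoid structure (as opposed to just homotopy type), since $\bar{\A}_m$ is an ind-thickening of $\aa^1$ with a subtle non-unital piece $\bar{\ann}\subset \bar{\A}_m$; this is handled by the explicit $\A_m^-\times \aa^1\times \A_m^+$ coordinatization and by noting that the $\A_m^\pm$ factors are formally trivial and so are contracted to a point under Kato--Nakayama realization, leaving the multiplicative monoid $\aa^1$ whose real blow-up at $0$ retracts onto $S^1$ compatibly with multiplication.
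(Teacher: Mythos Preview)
Your proposal is correct and follows essentially the same approach as the paper: identify the analytification of each vertex and arrow of \eqref{diag:fld_ld_log} with the corresponding piece of the topological diagram \eqref{diag:fld_ld}, then invoke the already-established fact that \eqref{diag:fld_ld} is a homotopy pullback presenting $LD$. The paper's own argument is extremely terse (essentially the single sentence ``Tracing through geometric models, we see this map is equivalent after taking geometric realizations to the map $FLD\to \comm_{S^1}$''), and your write-up simply fleshes out the identifications of $(\ann^\sim)^{an}\simeq S^1$, of $\angle^{an}$ with the orientation map $\rho$, and the fibrancy check that the paper leaves implicit.
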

This provides our log motivic enrichment of the little disks operad. This completes the proof of Theorem \ref{thm:log_ld} from the introduction, using the Lemma \ref{lm:eq_log_lim} from Appendix \ref{sec:mot-inv}.

\subsection{Weight splitting and formality}
\'Etale (co)homology of a variety defined over $\qq$ is Galois equivariant on the level of cochains. Let $F \in \Gamma_\qq$ be a Frobenius element associated to a prime $p,$ and let $\ell\neq p$ be another prime. Then if $X$ is an algebraic variety, (co)chains valued in $\qqbar_\ell$ can be viewed as $\qqbar_\ell[F]$-modules with finite-dimensional cohomology. The derived category of such modules, viewed as a symmetric monoidal $\infty$-category, is canonically graded by generalized eigenvalue of Frobenius. If $X$ is an algebraic variety, then $C_*(X, \qqbar_\ell)$ has \emph{motivic} cohomology, and in particular all eigenvalues that appear nontrivially in the grading are Weyl algebraic integers, and this grading can be induces an $\nn$-grading of $C_*(X, \qq)$ by $F$-\emph{weight} (log based $\sqrt{p}$ of an absolute value), called the (derived) weight grading. Now the Galois action on the \'etale (co)homology of the $\flc_{g,n,f}$ also has a weight grading, by the followign lemma. Define $\M_{g,n,\vec{f}}$ be the moduli space of smooth curves of genus $g$ with $n+f$ marked points, and with a choice of nonzero tangent vector at the last $f$ markings. 
\begin{lm}\label{lm:is_algebraic} $\flc_{g,n,f}$ is related to $\M_{g,n,\vec{f}}$ by a chain of maps of fs log schemes over $\qq$ which induce equivalence on analytification. 
\end{lm}
\begin{proof} Let $\bar{\M}_{g,n,\vec{f}}$ be the moduli space of stable nodal curves of genus $g$ with $n+f$ marked points, and with a choice of (possibly zero) tangent vector at the last $f$ marked points. Let $D : = \bar{\M}_{g,n,\vec{f}}\setminus \M_{g,n,\vec{f}}$ be the complement, and let $\bar{\M}_{g,n+f}\subset \bar{\M}_{g,n,\vec{f}}$ be embedded as the mutual zero section of all tangent bundles. Then from the last section we have $$\flc_{g,n,f}\cong (\bar{\M}_{g,n,\vec{f}},D)^{log}\times_{\bar{\M}_{g,n,\vec{f}}}\bar{\M}_{g,n,f},$$ and so the arrows $$\flc_{g,n,f}\from (\bar{\M}_{g,n,\vec{f}},D)^{log}\to \M_{g,n,\vec{f}}$$ furnish the desired sequence of arrows.
\end{proof}
Note that the stack $\M_{0,\vec{n+1}}$ makes sense for $n = 1$ and is equivalent to $\gg_m$ as the data of a genus-zero curve with two marked points and a nonzero tangent vector has no automorphisms and or moduli; adding a second marked vector gives $\gg_m$ worth of additional freedom. In particular the above argument can be extended (and simplified) in teh case of $n=1$, where we get the chain of equivalences: $$\fld_{0,0,2}\cong \pt_{log}\to (\aa^1, 0)^{log}\from \gg_m.$$ 

Let $\conf_{\aa^1, \vec{n}}$ be the configuration space of $n$ points on $\aa^1$ with choices of tangent vector. Choosing a fixed tangent vector $\del_\infty$ to $\pp^1$ at $\infty$ we define the map $\conf_{\aa^1, \vec{n}}\to \M_{0,0,\vec{n+1}}$ by completing the configuration of $n$ tagent vector on $\aa^1$ by the tangent vector $\del$ at $\infty.$ Then the map $\conf_{\aa^1, \vec{n}}\to \M_{0,0,\vec{n+1}}$ is equivalent to the quotient map $\conf_{\aa^1, \vec{n}}\to \conf_{\aa^1, \vec{n}}/\gg_a$ with action given by translation. In particular, the $\gg_a$-equivariang function $\vec{\angle}: \conf_{\aa^1, \vec{n}}\to \gg_m$ taking a configuration to its $n$th tangent vector (translated to $0$) factors through a map
$$\vec{\angle}_j: \M_{0,\vec{n+1}}\to \gg_m$$. Extending the argument in the proof of the lemma, we get the following corollary. 
\begin{cor}\label{cor:is_algebraic}
The map $\angle_j:\flc_{0,0, f+1}\to \flc_{0,0,2}$ is equivalent, via a sequence of maps of pairs of log varieties which induce equivalences on analytification, to the map $\vec{\angle}_j: \M_{0,\vec{n+1}}\to \gg_m$.
\end{cor}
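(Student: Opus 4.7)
The plan is to promote the zigzag constructed in Lemma \ref{lm:is_algebraic} from a comparison of spaces to a comparison of the two angle-valued maps $\angle_j$ and $\vec{\angle}_j$. Specifically, I would construct, for each term of the zigzag between $\flc_{0,0,n+1}$ and $\M_{0,\vec{n+1}}$, a corresponding term of a zigzag between $\flc_{0,0,2}$ and $\gg_m$, together with vertical maps forming a commuting ladder whose left column is $\angle_j$ and whose right column is $\vec{\angle}_j$.

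First I would recall, from Lemma \ref{lm:is_algebraic} and the remark following it, the chains of arrows
\[
\flc_{0,0,n+1} \;\longleftarrow\; (\bar{\M}_{0,\vec{n+1}}, D)^{\log} \;\longrightarrow\; \M_{0,\vec{n+1}}
\qquad\text{and}\qquad
\flc_{0,0,2} \;\cong\; \pt_{\log} \;\longrightarrow\; (\aa^1, 0)^{\log} \;\longleftarrow\; \gg_m,
\]
each inducing equivalences on analytification. The middle vertical arrow I would define geometrically: a point of $(\bar{\M}_{0,\vec{n+1}}, D)^{\log}$ classifies a stable genus zero curve with marked points, tangent vectors at the last $n+1$ of them, and log data along the nodal locus. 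Stabilize by contracting everything except the $j$-th and $(n{+}1)$-st framings, which produces a stable curve with two marked tangent vectors; the resulting "ratio of tangent vectors" defines a map to $(\aa^1, 0)^{\log}$, landing in $\gg_m$ over the open locus and encoding the normal-crossings parameter of the boundary degeneration at the boundary. By construction, the right-hand square then commutes on the nose with $\vec{\angle}_j$.

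The main obstacle is checking that under the left-hand equivalence from Lemma \ref{lm:is_algebraic}, the middle vertical map matches $\angle_j$. Unwinding definitions, $\angle_j$ is a composition of $\rotor_k \circ \forg_k$ over $k \ne j, n{+}1$, and each factor amounts either to contracting an unmarked rational component or to recording tangent-line data. The comparison reduces to the assertion that a rotor-plus-forget operation at an unused framing leaves the pair of framings $(j, n{+}1)$ intact and contributes nothing to the angle recorded at $(j, n{+}1)$. This in turn follows inductively from the annulus case $n=1$, which is already built into the monoidal structure on $\bar{\A}_m$ and the identification $\flc_{0,0,2} \cong \pt_{\log}$; once that base case is in hand, the general case is essentially formal by functoriality of stabilization over unmarked $\pp^1$ components.
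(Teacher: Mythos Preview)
Your approach is the paper's own: its proof of this corollary is the single phrase ``Extending the argument in the proof of the lemma,'' and the commuting ladder you build between the two zigzags of Lemma~\ref{lm:is_algebraic}, with a middle vertical map to $(\aa^1,0)^{log}$ interpolating $\angle_j$ and $\vec{\angle}_j$, is precisely that extension. The closing appeal to an induction from the annulus case is slightly misplaced---the $n=1$ case is the identity map and carries no content; what actually makes the left square commute is the direct compatibility of the rotor/forget operations defining $\angle_j$ with the forgetful maps on $\bar{\M}_{0,\vec{*}}$---but the overall structure is correct.
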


 This implies that the $\qq_\ell$-valued \'etale chains on $\flc$ (equivalently, on $\ffc$) are ``motivic'', i.e. are equivalent to chains on an ordinary Deligne-Mumford stack (in turn, after possibly extending coefficients to a finite extension of $\qq$, equivalent to a finite colimit of invariants of \'etale chains on schemes with respect to finite group actions). Moreover they are complements to normal crossings divisors in smooth Deligne-Mumford stacks defined over $\zz,$ and so these representations are in the derived category of Galois representations generated by cohomology of schemes with potentially good reduction, and the $F$-weight grading splits the weight filtration (in the Hodge theoretic sense, see Deligne's \cite{trois_points}). As the Galois action is compatible with gluing maps, we deduce a split weight filtration on the entire modular operad $B^{1,2},$ proving Theorem \ref{thm:log_bord_weight}. Now note that $H^{et}_*(FLC_n^{log}, \qq_\ell)$ is a tensor product of several copies of $H_*(\gg_m)$ with the \'etale homology of an iterated fibration with fibers punctured affine lines. A spectral sequence argument then tells us that $H^{et}_k(FLC_n^{log}, \qq_\ell)$ has pure Frobenius weight $2k$. As weights of Frobenius behave in a symmetric monoidal manner, we deduce a formality splitting of the operad $C_*(FLC,\qq_\ell)$, which implies formality of $C_*(FLC)$ over any characteristic zero field. 

A similar argument (using a chain of log motivic equivalences between $LD_n^{log}$ and the space $\t{Conf}_n$ of $n$-point configurations on the affine line) gives a formality splitting of $C_*(LD,\qq_\ell).$

\begin{appendices}
\section{Formal log structure}\label{sec:formal_log}\label{sec:app_formal_log}
Let $X$ be a formal scheme. 
\begin{defi}\label{defi:formal_log}
A log structure over $X$ is an \'etale sheaf $\ll$ of topological monoids together with an open map $\alpha:\ll\to (\oo_X,\cdot)$, such that for any geometric point $x\in X$ there exists in some formal \'etale neighborhood of $x$ a nilpotent ideal sheaf $\I$ with the property that the closed subscheme cut out by $I$ is of finite type and the sheaf of monoids $1+\I\subset \oo^\times$ acts freely on $\ll,$ with discrete quotient.
\end{defi}
For $\X = (X, \ll, \alpha)$ as above, say a closed subscheme of finite type $X'\subset X$ containing the reduced locus is \emph{thick} for $\ll$ if it satisfies the condition above. Evidently, a thickening of a thick scheme is thick. For $X'\subset X$ a thick scheme, define $\ll': = \ll/1+\I,$ with induced map $\alpha':\ll'\to \oo_{X'}$. Then for two thick schemes $i:X'\subset X''\subset X,$ we have canonically $i^*(X'', \ll'', \alpha'') \cong (X',\ll', \alpha')$ (using evident notation). Define $\X_{ind}$ to be the log stack defined as the direct limit of the $(X', \ll', \alpha')$ as $X'$ ranges over thick closed subschemes. It has underlying stack $X$ (viewed as an ind-scheme) and for any map $S\to X$ for $S$ of finite type, the pullback $S\times_X\X$ can be computed as $S\times_{X'}\X'$ for $X'$ any thick closed subscheme containing the image of $S$. 

For $D\subset X$ a closed subscheme, we say that $(X, D)$ is a formal normal-crossings pair if $X$ is formally smooth (possibly infinite-dimensional), and $D$ is \'etale locally cut out by a product $x_1\cdots x_k$ (some finite $k$), for $x_i$ formal local coordinates with linearly independent normals. We say that a map $(X,D)\to (X', D')$ of formal normal-crossings pairs is a map $X\to X'$ such that the pullback of $D$ is a finite thickening of $D'.$ 

\begin{defi}\label{defi:nc_formal_log} 
The formal log structure $(X,D)^{log, fml}$ associated to the formal normal-crossings pair $(X, D)$ is the subsheaf of $\oo$ consisting locally of functions of the form $\prod x_i^{n_i}\cdot f,$ where $x_i$ are local coordinates which cut out components of $D$ (as above) and $f\in \oo^\times.$ Define $$(X,D)^{log}: = (X,D)^{log, fml}_{ind}.$$
\end{defi}
As $\oo^\times$ acts freely on this monoid, any finite-type subset containing $X^0$ will be thick, hence $\ll$ as above is a formal log structure. 
\begin{lm}\label{lm:formal_log_fun}
The assignment $(X,D)\mapsto (X,D)^{log}$ is symmetric monoidal functorial in the pair $(X, D)$, and compatible with smooth pullback on the base $X$. 
\end{lm}
It suffices to prove that the assignment $(X,D)\mapsto (X, D)^{log, fml}$ is functorial. Note that for $f\in \oo,$ we have $f\in \ll$ if and only if the vanishing locus of $f$ is contained in a finite thickening of $D$, and this implies the lemma.

If $(X, D)$ is a normal-crossings pair in an \'etale local sense, we can reproduce the above construction.

\section{Log motivic structures on Betti (co)homology}\label{sec:mot-inv}
For $\Lambda$ a commutative ring, define $BR^{log}_\Lambda$ to be the \emph{Log Betti ring} functor $BR^{log}_\Lambda:\LogSch\to E_\infty\t{-Alg}_\Lambda$ as the functor $BR^{log}_\Lambda:\X\mapsto C^*(\X^{an},\Lambda)$ from log schemes to $E_\infty$ rings over $\Lambda.$ 
\begin{defi} We say that a \emph{Log Motivic realization} on Betti (co)chains with coefficients in a ring $\Lambda$ is an action $\alpha$ of a group $\Gamma$ on the DG functor $BR^{log}_\Lambda$. \end{defi}
Such an action can be interpted using an appropriate model-categorical language, or, better, in a higher-categorical context, as map of $(\infty,1)$-categories from $\Gamma$ to the full $\infty$-subcategory of the category $\t{Fun}(\LogSch, E_\infty\t{-Alg})$ of functors on the single object $F$. The latter definition extends to the context where $\Lambda$ is a derived ring and $\Gamma$ is an ``$\infty$-group'' (the loop space of a pointed $\infty$-groupoid). In particular, there is always a ``universal'' log motivic realization with coefficients in $\Lambda$ (given by taking the full group of automorphisms). 
\begin{defi}\label{log_mot_galois} We define the log motivic Galois group to be the prestack, i.e.\ the functor from rings to group (both in an $\infty$-categorical context) that takes a ring $\Lambda$ to the group of automorhpisms of $BR^{log}_\Lambda.$ \end{defi}
Note however the group in question may have non-discrete points and be hard to work with, so we work with more special and better-understood motivic realizations here. 

Our main example of a log motivic structure is a consequence of Kato and Nakayama's construction in \cite{kato-nakayama} of log \'etale cohomology. Here $\Lambda = \qq_\ell$ (or more generally $\zz_\ell$), and the group acting is $\Gamma = \Gamma_\qq,$ the absolute Galois group of $\qq.$ Note that as for any fs log scheme of finite type, its analytification is representable by a finite CW complex, the Betti cohomology functor has finite total cohomology and is dualizable as a $\zz$-module. In particular, motivic structures on Betti cochains compatible with product are equivalent to motivic structures on Betti chains compatible with coproduct. 

\subsection{Galois action on log Betti cochains with \'etale coefficients}
Define $\C_{\t{\'Et}}(\X)$ to be the category of ``log \'etale opens of $\X$,'' i.e.\ log \'etale maps $\X'\to \X$. A map $f:\X'\to \X$ (resp., a collection of maps $f_i\{\X'_i\to \X\}$) is a \emph{log \'etale cover} if it is \'etale and dominant on underlying schemes (resp., if the map $f:\sqcup \X'_i\to \X$ is a log \'etale cover). It is a straightforward check that log \'etale maps with log \'etale covers form a Grothendieck site. 

For a fs log scheme $\X$, the category $\C_{\t{\'Et}}(\X)$ endowed with this notion of cover is a site --- call this site $\t{\'Et}_\X.$ When $\X = X$ is a scheme, this site agrees with the ordinary \'etale site of $X$. Furthermore, log \'etale maps of fs log schemes imply \'etale maps of Kato-Nakayama analytifications, with covers going to covers, so we have a map of sites (in the opposite direction) $$\t{\'Et}_{an}:\t{\'Et}_{\X^{an}}\to \t{\'Et}_{\X}.$$

Kato and Nakayama \cite{kato-nakayama} showed that (over $\cc,$ and hence also over $\qqbar$) the \'etale homological invariants of this site agree with homological invariants of the Kato-Nakayama analytification; it follows from their proof that the isomorphism is given precisely by the map of sites above.
\begin{thm}[Kato-Nakayama, \cite{kato-nakayama}, Theorem 0.2(a)]
Suppose $X$ is an fs log scheme over $\cc.$ Then the map of sites $\t{\'Et}_{an}$ induces an isomorphism of cohomology groups $H^*_{\et}(\X, A)\cong H^*_{top}(\X^{an}, A),$ for any finite abelian group $A$.
\end{thm}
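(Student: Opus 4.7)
My plan is to reduce by strict étale descent to a standard toric chart, compute both sides explicitly there, and then globalize. The comparison map is canonically induced by the map of sites $\t{\'Et}_{an}$, so the content of the theorem is that this map induces an isomorphism on $A$-valued cohomology. The argument closely follows Kato and Nakayama's original proof in \cite{kato-nakayama}.

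First, I would reduce to a toric model. Every fs log scheme is strict étale locally of the form $\X_P := \spec(\cc[P])$ with its canonical log structure, for an fs monoid $P$. Both $H^*_{\et}(-, A)$ (in the log sense defined earlier in the paper) and $H^*_{top}((-)^{an}, A)$ are strict-étale local, and $\t{\'Et}_{an}$ is functorial, so it suffices to treat $\X = \X_P$. Here $\X_P^{an} = \hom(P, \cc_{log})$ with $\cc_{log} = \rr^{\ge 0}\times S^1$, which deformation-retracts onto the compact torus $T_P := \hom(P^{gp}, S^1)$ by contracting the $\rr^{\ge 0}$ factor to $1$. Hence $H^*_{top}(\X_P^{an}, A)\cong H^*(T_P, A)$.

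Second, I would analyze the log étale side of $\X_P$. In characteristic zero and with finite coefficients $A$, every log étale cohomology class is represented by a Kummer-log étale cover, since the ``log étale'' site (finite flat on underlying schemes, isomorphism on log tangent bundles) is generated by Kummer covers away from characteristic. Kummer covers of $\X_P$ are the toric maps $\X_{P'}\to \X_P$ for finite-index overmonoids $P\subset P'\subset P^{gp}$, so the Kummer fundamental group is the profinite group $\hom(P^{gp}, \hat{\zz}(1))$. A choice of compatible roots of unity in $\cc$ identifies this with the profinite completion of $\pi_1(T_P)$. By Künneth the identification of cohomology groups reduces to the rank-one case $P = \nn$, where both sides compute $H^*(S^1, A)$.

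The main obstacle will be globalization: functoriality under strict étale pullback is formal, but gluing the local comparisons into a global isomorphism requires knowing that the higher direct image sheaves under $\X_P^{an}\to \X_P(\cc)$ are locally constant with stalks $H^*(T_P, A)$ and that this matches the analogous Kummer-log étale Leray spectral sequence on $\X_P$. Once this compatibility is established, a standard descent/spectral-sequence argument on any strict étale cover by charts concludes the proof. This compatibility is the technical heart of \cite{kato-nakayama}, and I would follow their argument rather than reproduce it.
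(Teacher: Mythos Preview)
The paper does not prove this theorem at all: it is stated as a citation of Theorem 0.2(a) of Kato--Nakayama \cite{kato-nakayama} and immediately used (the only commentary is the parenthetical remark that the proof extends to constructible coefficients, followed by the corollary about chain-level isomorphisms). So there is no ``paper's own proof'' to compare against.

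Your sketch is a reasonable outline of the original Kato--Nakayama argument, and you are right to flag the globalization/Leray compatibility as the technical heart. One point to be careful about: the paper's working definition of ``log \'etale'' requires finite flat on underlying schemes, which is more restrictive than the standard Kummer-log-\'etale site used in \cite{kato-nakayama}; you silently identify the two when you say ``every log \'etale cohomology class is represented by a Kummer-log \'etale cover,'' and this deserves a sentence of justification (or a remark that the cohomology theories agree for fs log schemes over $\cc$ with torsion coefficients). But since the paper itself defers entirely to the reference, your proposal already goes further than what the paper provides.
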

(In fact, Kato's proof extends to a result with non-constant ``constructible'' coefficient sheaf $A$.) It follows that the map of sites induces a chain-level isomorphism of derived global sections, and taking derived inverse limits along $\zz/p^n\zz,$ we deduce quasiisomorphism of (chain-level) \'etale cohomology with coefficients in $\zz_p$ and therefore $\qq_p$. 

We deduce the following result.
\begin{cor}\label{thm:log_betti_comparison}
  Let $\X$ be an fs log orbifold over $\cc$. Then we have (in the derived category) canonical quasiisomorphisms
  $$C^*_{\et}(X, \zz_p)\cong C^*(X^{an}, \zz_p)$$
  $$C^*_{\et}(X, \qq_p)\cong C^*(X^{an}, \qq_p).$$ 
\end{cor}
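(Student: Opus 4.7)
The plan is to bootstrap from the theorem of Kato–Nakayama (quoted just before the corollary) giving $H^*_{\et}(\X,A) \cong H^*_{top}(\X^{an},A)$ for finite abelian $A$, and to upgrade this to a chain-level quasiisomorphism, then pass to $p$-adic coefficients, and finally extend from schemes to orbifolds.

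First I would observe that the Kato–Nakayama isomorphism is induced by the map of sites $\t{\'Et}_{an}\colon \t{\'Et}_{\X^{an}}\to \t{\'Et}_\X$ described in the paragraph preceding the theorem. This is not merely an isomorphism on cohomology groups but on the level of derived global sections of the constant sheaf: pulling back $A$ via the map of sites and taking $R\Gamma$ produces a morphism of complexes $C^*_{\et}(\X,A)\to C^*(\X^{an},A)$ inducing the claimed isomorphism in each degree. (On fs log schemes of finite type, both sides are bounded complexes of finite-dimensional $A$-modules by the finiteness of $\X^{an}$ as a CW complex, so a degreewise isomorphism is a quasiisomorphism.) Thus for each finite $A = \zz/p^n\zz$ we have a canonical quasiisomorphism $C^*_{\et}(\X,\zz/p^n)\simeq C^*(\X^{an},\zz/p^n)$, and these are compatible in $n$ since the map of sites is canonical.

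Next, to pass to $\zz_p$-coefficients, I would take the derived inverse limit along the tower $n\mapsto \zz/p^n\zz$. By definition $C^*_{\et}(\X,\zz_p) := R\!\varprojlim_n C^*_{\et}(\X,\zz/p^n)$, and on the topological side, since $\X^{an}$ is a finite CW complex (or a finite CW orbifold), Milnor-style Mittag-Leffler vanishing gives $C^*(\X^{an},\zz_p)\simeq R\!\varprojlim_n C^*(\X^{an},\zz/p^n)$. Applying $R\!\varprojlim$ to the system of quasiisomorphisms produces the desired quasiisomorphism $C^*_{\et}(\X,\zz_p)\simeq C^*(\X^{an},\zz_p)$. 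Inverting $p$ (which is an exact tensor operation and commutes with cohomology) then gives the $\qq_p$ version.

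Finally, for the orbifold case, I would use the fact that an fs log orbifold $\X$ admits a strict \'etale cover by an fs log scheme $U\to \X$ with Čech nerve $U^{\bullet/\X}$ consisting of fs log schemes, and that both functors $\X\mapsto C^*_{\et}(\X,\Lambda)$ and $\X\mapsto C^*(\X^{an},\Lambda)$ satisfy \'etale hyperdescent (the latter because Kato–Nakayama analytification commutes with strict \'etale morphisms, so the analytification of $U^{\bullet/\X}$ is an \'etale hypercover of $\X^{an}$ as a topological orbifold). Applying the scheme-case quasiisomorphism termwise to $U^{\bullet/\X}$ and taking totalizations yields the orbifold statement. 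The main obstacle is bookkeeping at the last step: verifying that the map of sites $\t{\'Et}_{an}$ is compatible with strict \'etale base change so that the descent spectral sequences on both sides match, but this is essentially formal once the site-level comparison is in hand.
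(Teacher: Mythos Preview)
Your proposal is correct and follows essentially the same route as the paper: the paper deduces the chain-level statement from the map of sites, passes to $\zz_p$ by taking the derived inverse limit along $\zz/p^n$, inverts $p$ for $\qq_p$, and handles the orbifold case by applying the scheme-level quasiisomorphism termwise to the \v{C}ech complex of a strict \'etale cover by log schemes. Your write-up is somewhat more detailed (Mittag--Leffler, explicit hyperdescent bookkeeping), but the argument is the same.
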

 \begin{proof} To pass from schemes to orbifolds we simply observe quasiisomorphism on each term in the \v{C}ech complex associated to a strict \'etale cover of $\X$ by log \emph{schemes}.\end{proof}

\subsection{Extending motivic realization to $LD$}
Note that our model for the operad $LD$ is not as an operad in log schemes, but as a ``homotopy limit'' of maps of such. More specifically, the spaces comprising the operad $LD$ are homotopy fibers of log schemes, i.e.\ objects of the category obtained from $\LogSch$ by formally adjoining homotopy limits. On the level of cochains, colimits of spaces go to direct limits of algebras; thus one can uniquely extend $LD$ to a functor from the category $\widehat{\LogSch}$ to $E_\infty$-algebras, in limit-preserving way.

\subsection{Proof of Theorem \ref{thm:log_ld}} 
This formalism us complete the proof of Theorem \ref{thm:log_ld} by using the following lemma.
\begin{lm}\label{lm:eq_log_lim}
Given a functor $B:\LogSch\to \C$ as above, the $\Lambda$-module \emph{co}-operad $C^*(LD,\Lambda)$ can be canonically lifted to a co-operad $FLD_\C$ in $\C,$ with a canonical equivalence on the level of spaces of operations $(FLD_\C)_n\cong B(\t{Conf}_n)$ for $\t{Conf}_n$ the space of configurations of $n$ distinct points in the plane (with trivial log structure). 
\end{lm}
\begin{proof}
Consider the diagram of operads in $\LogSch$ (from Theorem \ref{thm:fld_ld_lim})
\begin{equation}  \begin{tikzcd}
    FLD^{log} \arrow[r, "\angle"]& \comm_{\ann^\sim}\\
    & \comm. \arrow[u]
  \end{tikzcd}
\end{equation}
As homotopy pullback commutes with the functor $\O\to \O_n$ from operads to spaces, when symmetric monoidal structure is given by pullback, this can be understood as an operad in $\LogSch^{holim}.$ Moreover, by Corollary \ref{cor:is_algebraic}, on the level of spaces of $n$-ary operations, this diagram is equivalent to the diagram 
\begin{equation}  
\begin{tikzcd}
    \M_{0,\vec{n+1}} \arrow[r, "\vec{\angle}%(\widehat{\mathrm{angle}}_1,\dots, \widehat{\mathrm{angle}}_n)
"]& \gg_m^n\\
    & \pt. \arrow[u]
  \end{tikzcd}
\end{equation}
Now (in the terminology introduced for that corollary), the map $\vec{\angle}: = (\vec{\angle}_1,\dots, \vec{\angle}_n):\M_{0,\vec{n+1}} \to \gg_m^n$ is given by taking the $\gg_a$-quotient of the map $(\vec{\angle}_1,\dots, \vec{\angle}_n):\conf_{\vec{n+1}}\to \gg_m^n$ taking a configuration of $n$ points on $\aa^1$ with choice of nonzero tangent vector to the tangent vectors (where the tangent space is identified with $\aa^1$ also via translation equivariance). Now as a free quotient by $\aa^1$ induces homotopy equivalence on analytifications, the homotopy fiber of the diagram above is equivalent via a sequence of maps inducing equivalences on Betti cohomology to the homotopy fiber of $(\vec{\angle}_1,\dots, \vec{\angle}_n):\conf_{\vec{n+1}}\to \gg_m^n$, which is a fiber bundle (both in the category of schemes and after analytification), with fiber $\conf_n.$ 
\end{proof}

\subsection{Comparison with Hain's motivic Tate map}
In the paper \cite{hain}, Richard Hain constructs a map between two unstable motives corresponding to the \emph{Tate curve} parametrization (the parametrization of an elliptic curve by an infinitesimal genus zero surface), which is a map from the motive of $\gg_m\times \pp^1\setminus \{0,1,\infty\}$ to the motive of an infinitesimal neighborhood of the cusp in the universal punctured elliptic curve $\M_{1,2}$. Hain describes also a real geometric ``cartoon'' for this map: the domain is given by $S^1$ times the real blow-up of $\pp^1$ at $0,1,\infty$, and the codomain is the fiber over the cusp of the real blow-up of the identity section in the universal elliptic curve. The map is given by taking $(x, \theta)\in \t{Bl}_{0,1,\infty}(\pp^1)\times S^1$ to the curve obtained by glueing the boundary at $0$ and the boundary at $\infty$ along the angle $\theta,$ then marking the image of $x\in \pp^1\setminus \{0,1,\infty\}.$ Now we see (see \ref{geom_realn}) that this map geometrically is almost exactly the same as our glueing map $\selfglue :\flc_{0,2,2}^{an}\to \flc_{1,2,0}^{an}$ from a the moduli space of genus zero curves with two boundary components and two markings to the universal log elliptic curve. The minor difference is that instead of having realization equivalent to $\gg_m\times (\pp^1\setminus \{0,1,\infty\})$, our space $\flc_{0,2,2}$ has realization equivalent to $\gg_m^2\times \pp^1\setminus \{0,1,\infty\}$, as we are choosing boundary parametrizations for both of the boundary components we glue. This extra degree of freedom, however, can be eliminated after we pass to the homotopy limit completion. Indeed, $\flc_{0,2,2}$ is isomorphic as a log scheme to $(\pp^1, D)^{log}\times \pt_{log}^2,$ for $D = \{0,1,\infty\}\subset \pp^1$ the divisor. Let $\kappa: \flc_{0,2,2}\to (\aa^1,0)^{log}$ be the map given by the composition $\flc_{0,2,2}\mapsto \pt_{log}\mapsto (\aa^1, 0)^{log}$ given by first projecting to one of the log point coordinates, then embedding it in $\aa^1$ with log structure defined by the divisor at zero. Now we can define formally a homotopy fiber $F$ ``$\subset$'' $\flc_{0,2,2}$ of the derived limit-completion of $\LogSch$ given by taking the homotopy fiber of the diagram $\flc_{0,2,2}\rTo{\kappa} (\aa^1,0)^{log}\lTo{\iota}_1 \pt,$ where $\iota_1:\pt\to \aa^1$ is the embedding of the point at $1.$ On the level of analytification, taking a fiber of a map to $(\aa^1,0)^{log}$ is equivalent to fixing one of the two parametrizations, and by a corresponding algebro-geometric argument, $F$ is motivically equivalent to $\gg_m\times (\pp^1\setminus \{0,1,\infty\}).$ The restriction to $F$ of the map $\flc_{0,2,2}\to \flc_{1,2,0}$ is precisely the one described in the cartoon in \cite{hain}. 

Our logarithmic model does not furnish us with a noncommutative motive. However, the resulting map on Betti cochains $$(\selfglue|_F)^* :C^*(\flc_{1,2,0}^{an})\to C^*(F^{an})$$ is a map of rings with full log motivic structure. It is possible to apply a Tannakian duality procedure at a fiber $f\in F$ and the corresponding fiber $\selfglue(f)\in \flc_{1,2,0}^{an}$ (i.e. replace the rings by the groups of automorphisms of fiber functors, which may be derived objects), and from this to deduce a map on Lie algebras with full log motivic structure. We do not attempt to do this here, but perhaps if one assumes an identification between log and ordinary motives in this case and carefully performs a Tannakian duality argument, this can imply formally the result of the KZB computation in \cite{hain}, indeed in a way that guarantees compatibility of motivic structure with $\qq$ and even with $\zz$ coefficients.

\section{Integer coefficients}\label{sec:int_coeff}
For the sake of clarity, we have so far defined all our stacks to be over the rational numbers, $\qq.$ However with a little caution all the constructions go through over $\zz.$ In this section we sketch the relevant constructions extremely briefly.

It is known (see \cite{stacks_project}) that the moduli space of marked nodal curves is a smooth stack over $\zz.$ Our construction of the isomorphism $\hour:\bar{\ffc}_{g,n,f}\cong \Hour_{g,n,f}$ from Section \ref{sec:hourglass-gen} allows one to identify the stack $\ffc_{g,n,f}$ over $\zz$ with a bundle over $\bar{\M}_{g,n+2f}^\wedge$ with fibers $\A_{0,1}: = \aut(\aa^1, 0, 1)$. This is a formal group which is no longer determined by its Lie algebra. Nevertheless it is evidently true that $\A_{0,1}$ is the completion at the polynomial $x$ of the ind-affine monoid $M_{0,1}$ given by polynomials which fix $0,1,$ viewed as a monoid under composition. Thus once again $\bar{\ffc}$ is \'etale locally isomorphic to $\bar{\M}_{g, n+2f}^\wedge\times \dd^\infty$. The nodal locus will once again be a normal-crossings divisor (in an \'etale sense) --- see \cite{stacks_project}. Both our gluing construction and the constructions in Appendix B go through to give the modular operad in (suitably smooth) ind-stacks and modular operads in (ind) log stacks $\ffc, \flc$, with the former formally ind-smooth and the latter of finite type.

\section{Modular operads}\label{sec:app_mod_op}
Generalizations of the notion of operad are themselves often described as algebras over a suitable colored operad. We describe here a certain combinatorial construction of a colored operad, which we call $\mathtt{ModCor},$ the operad of ``modular corollas'', following Getzler and Kapranov, \cite{getzler-kapranov}. We explain briefly the notion of operad in the $\infty$-category world and of operad in (pre-)stacks. 

\subsection{The category of multicorollas}

In this section, the notion of \emph{graph} will denote a ``graph with half-edges'', a.k.a. a \emph{pointed} one-dimensional simplicial complex. This is a graph in the usual sense, with vertices indexed by a pointed set $V_+ : = V\sqcup\{*\}$ and (directed) edges $E$ with source, target functions $i, f:E\to V\sqcup \{*\}$, respectively. The vertex $*$ is interpreted as the ``empty'' vertex. For $\Gamma$ a graph, we define its geometric realization $|\Gamma|$ to be the "clopen" variety obtained by taking its realization as a one-dimensional simplicial complex, then removing the vertex $*$. Union of graphs is defined as the wedge union, relative $*$, which makes it compatible with union of geometric realizations. Connected components and genus of a graph are interpreted in terms of the associated geometric realization. Edges with (source, resp., target) $*$ are called (left., resp., right) \emph{half-edges}; an edge with both source and target $*$ is called an \emph{open edge}. We view ``classical'' graphs (without half-edges) as graphs in our sense by adjoining a disjoint base vertex $*$. In particular, when writing $\emptyset,$ resp., $\pt$ we will mean the graphs with vertices $*,$ resp., $\pt, *$ and no edges. A graph without half-edges or open edges is called \emph{closed}. The \emph{interior} of a graph with half-edges is the closed graph obtained by removing all open and half-edges. Write $E_+(\Gamma)$ for the set of ``outgoing half-edges'' $\{e_+\mid f(e) = *\}$ and $E_-(\Gamma)$ for the set of ``incoming half-edges'' $e_-\mid i(e) = *$. Then $E_+ \sqcup E_-$ has one element for each half-edge of $\Gamma$ with one non-empty vertex and two elements (the source and target ``halves'') for each open edge.

A graph is finite if it has finitely many edges and vertices. Finite graphs with graph isomorphisms form a category (indeed, a groupoid). We view it as a symmetric monoidal category with symmetric monoidal structure given by disjoint union. We also consider the larger category of \emph{undirected graphs}, $\Gr^\pm$ where isomorphisms between graphs are allowed to reverse directions of edges. We will be interested also in the category $\Gr_{mod}$ of ``modular graphs'', which for us will be synonymous with nonnegatively graded graphs: graphs whose interior vertices are decorated by weights in $\nn$. For reasons that will be aparent below, we call the weight associated to a vertex its \emph{genus}, and written $g(v)$. The underlying graph of a modular graph $(\Gamma, g:V_\Gamma\to \nn)$ is the graph $\Gamma$ with gradings forgotten. Isomorphisms of modular graphs are isomorphisms of underlying graphs which preserve genus of vertices. The category $\Gr_{mod}^\pm$ is defined similarly, with isomorphisms given by isomorphisms of underlying undirected graphs which preserve genus of vertices. 

A \emph{corolla} is a connected graph with interior isomorphic to $\pt$. It has a single interior vertex $v$ and edges which are (either left or right) half-edges abutting on $v$. A \emph{modular corolla} is a modular graph with underlying graph a corolla. A \emph{multicorolla} is a graph which is a disjoint union of corollas. A modular multicorolla is a modular graph with underlying graph a multicorolla. Note that isomorphisms of undirected graphs (i.e. edge re-orientations) preserve the property of being a (multi-)corolla. A \emph{labeled corolla} is a corolla with edges labeled $1,\dots, d$, with $d$ the degree (ignoring orientation). A labeled multicorolla is a multicorolla with vertices numbered $1,\dots, t$ (independent of genus) and each component corolla labeled as above.

Given a graph $\Gamma$ we define its corolla collapse $C(\Gamma)$ to be the multicorolla whose vertices are connected components $\pi_0(\Gamma)$, such that each $\alpha\in \pi_0(\Gamma)$ has incoming edges $E_-(\Gamma_\alpha)$ and outgoing edges $E_+(\Gamma_\alpha)$, for $\Gamma_\alpha\subset \Gamma$ the connected component classified by $\alpha.$ In other words: for every connected component $\alpha\in C(\Gamma)$ corresponding to a subgraph $\Gamma_\alpha\in \Gamma$ with at least one interior vertex $\alpha$ has edges in bijection with half-edges in $\Gamma_\alpha,$ and for every connected component $\Gamma_\alpha$ consisting of just an open edge $e$, the vertex $\alpha$ has one incoming and an outgoing half-edge corresponding to two ``halves'' of $e$. If $\Gamma$ is endowed with a grading $g:V(\Gamma)\to \nn,$ we define a grading on $C(\Gamma)$ with $$g(\alpha) : = \sum_{v\in V\big(\Gamma_\alpha\big)} g(v) + \t{gen}(\Gamma_\alpha).$$ Here $\t{gen}(\Gamma_\alpha)$ is the genus of the \emph{interior} of $\Gamma_\alpha$ (number of interior vertices minus number of interior edges). In particular, the grading of every $\alpha$ corresponding to an open edge is $0$. 

Given a graph $\Gamma$ and a set $S\subset E_{\t{int}}$ of edges \emph{possibly with multiplicity}, define the \emph{splitting} $\Gamma_{!S}$ to be the graph with homotopy type given by removing $n$ distinct points from the interior of an edge $e$ if it appears with multiplicity $n$ in $S$. Combinatorially, such an edge $e$ with $i(e) = x, f(e) = y$ (either or both of which can be $*$) gets split into an $n+1$-tuple of new edges, $e_0,\dots, e_n$ with $i(e_0)=x$, $f(e_n) = y$ and all other endpoints $*.$ For $\Gamma$ a graph, define the \emph{full splitting} $$\Gamma_!: = \Gamma_{!E(\Gamma)}$$ to be the splitting at all edges. This graph is a disjoint union $\Gamma_! = \Gamma_!^{in}\sqcup \Gamma_!^{out}$ where $\Gamma_!^{in}$ consists of a corolla for every vertex $v$ of $\Gamma$ (with the same multiplicity) and (canonically) $\Gamma_!^{out} \cong \Gamma_+\sqcup \Gamma_-,$ which is in bijection with edges of $C(\Gamma).$ 

Now we define the \emph{modular corolla category}, $\mathtt{ModCor}$ resp., the \emph{directed modular corolla category}, $\mathtt{ModCor}_{or}$, as follows. Objects are \emph{labeled modular multi-corollas}. A morphism from $M$ to $N$ is a graph $\Gamma,$ together with labelings of $\Gamma_!^{in}$ and $C(\Gamma)$ such that $\Gamma_!^{in}\cong M$ as an undirected (resp., directed) graph, and $C(\Gamma)\cong N$ as an undirected (resp., directed) graph. In both cases, we require the labellings to agree. Note that because of our labelings, neither objects nor morphisms have automorphisms. If $L\rTo{\Gamma'} M\rTo{\Gamma} N$ are morphisms as above, then the composition is the unique graph $\Gamma''$ (with appropriate labelings) such that there exists a collection $S$ of edges with multiplicity such that $\Gamma' = \Gamma_{!S}$ and $\Gamma = C(\Gamma_{!S}),$ compatibly with labelings. Informally, $\Gamma''$ is uniquely constructed by ``gluing in'' a component of $\Gamma$ at every vertex of $\Gamma'$ in a way compatible with vertices. Note that while compatibility of labelings insures the combinatorial objects classified by objects and morphisms have no automorphisms, nevertheless symmetries of corollas are encoded in the category. For example, if $\sigma$ is a re-numbering of the labels of a single corolla $M$, we can define a morphism with underlying graph $\Gamma = M$, with the labels of $\Gamma_!^{in}$ and $C(\Gamma)$ (both canonically isomorphic to $M$) differing by $\Gamma.$ A generalization of this argument includes the category of automorphisms of multicorollas as a sub-category of $\mathtt{ModCor}$ (and analogously for the directed variant).

\subsection{The colored operads of corollas.}
Define $$\t{Fin}$$ to be the category of \emph{labeled} finite sets, with objects $\langle n\rangle = \{1,\dots, n\}.$ We view $\t{Fin}$ as a symmetric monoidal category under disjoint union: $\langle m\rangle \sqcup \langle n\rangle : = \langle m+n\rangle,$ and this satisfies the requirements of symmetric monoidal structure in a standard way. It is good for intuition to think of objects of $\t{Fin}$ as abstract sets and to think of the operation $\sqcup$ as \emph{actual} disjoint union of sets (though this is not true and the symmetric monoidal structure involves shuffle maps, there is a way of weakening the notion of symmetric monoidal structure to make this intuition precise). 

It is clear that both $\mathtt{ModCor}_{dir}$ and $\mathtt{ModCor}$ are symmetric monoidal categories under \emph{disjoint union}, and that the assignment $C \mapsto \pi_0(C)$ taking a multicorolla to its set of connected components is a (strict) symmetric monoidal functor to the category $\t{Fin}$ of finite sets, also under disjoint union (note that the graph $\Gamma$ induces naturally a map of sets $\pi_0\Gamma_!^{in}\to \pi_0C(\Gamma)$). We call this functor $$\Pi: \mathtt{MultiCor}\to \mathtt{Fin}$$ (and similarly $\Pi:\mathtt{MultiCor}_{dir}\to \mathtt{Fin}$). 

Now suppose $\Pi: \C\to \mathtt{Fin}$ is a strictly symmetric monoidal functor (from some symmetric monoidal category). For $J\subset \mathtt{Fin}$ a subcategory, write $\C_J$ for the category of objects and arrows of $\C$ over objects and arrows of $J$, and in particular for $S\in \mathtt{Fin}$ a set write $\C_S$ for $\C_{*_S}$ with $*_S$ the category with one object and for $f:S\to S'\in \mathtt{Fin}$ a morphism, write $I_f\subset \mathtt{Fin}$ for the sub-category with two objects and one non-identity morphism $f$ and write $\C_f: = \C_{I_f}$ (so that, for $X\in \C(S), Y\in \C(S')$ we have $\C_f(X, Y) = \{\alpha:X\to Y\mid \Pi(\alpha) = f\}$).  

Now (see \cite{ha}, 2.1.0) the structure of a colored operad on a set $J$ of colors is equivalent to the structure of a pair $\C, \Pi:\C\to \mathtt{Fin}$ with certain properties, given in the following definition.
\begin{defi}
A pair $(\C, \Pi)$ with $(\C,\otimes)$ a small symmetric monoidal category and $\Pi:(\C,\otimes)\to (\mathtt{Fin},\sqcup)$ a symmetric monoidal functor is \emph{operadic} with color set $J$ if
\begin{itemize}
\item The category $\Pi_{\{1\}}$ has object set $J$. 
\item $\Pi$ is strict symmetric monoidal.
\item The functor $\Pi$ admits \emph{Cartesian lifts} over any isomorphism $\sigma:S\to S'$ of sets.\footnote{This is a requirement that ensures that the natural functors $\C_S\to \C_\sigma \from \C_{S'}$ are equivalences of categories, in a way compatible with mapping in or out, and corresponds to the symmetric group actions on the colored operad.}
\item For two disjoint sets $S, T$ the functor $\C_S\times \C_T\to \C_{S\sqcup T}$ (given by symmetric monoidal structure) is an equivalence of categories.
\item For $f:S\to S', g:T\to T'$, and objects $X\in \C_S, X'\in \C_{S'}, Y\in \C_T, Y'\in \C_{T'}$, the map induced by symmetric monoidal structure 
$\hom_f(X, X')\times \hom_g(Y, Y')\to \hom_{f\sqcup g}(X\sqcup Y, X'\sqcup Y')$ is a bijection.
\end{itemize}
\end{defi}

Here $\C, \Pi$ should be taken up to categorical equivalence. Given $\C,\Pi$ the underlying colored operad has operations $\t{Op}_\C(j_1,\dots, j_n\mid j) : = \hom_\C(j_1\otimes \dots \otimes j_n, j)$. Here the colors $j_i,j\in J$ are viewed as objects of the subcategory $\C_{1}\subset \C$, so $j_1\otimes \dots \otimes j_n\in \C_{\langle n\rangle}.$ Of course, any $\alpha\in \hom_\C(j_1\otimes \dots \otimes j_n, j)$ has $\Pi(\alpha)$ the unique map $\langle n\rangle \to \langle 1\rangle.$ We will use the terms \emph{operadic pair} and \emph{colored operad} interchangeably. 

This definition packs a lot of abstraction, but is good for generalizing to higher-categorical contexts. The following is a direct definition check.
\begin{lm}
The symmetric monoidal category $(\mathtt{MultiCor}, \sqcup)$ equipped with the functor $\Pi:\mathtt{MultiCor}\to \mathtt{Fin}$ is operadic, and similarly for $\mathtt{MultiCor}_{dir}$.
\end{lm}

Colors of the category $\mathtt{MultiCor}$ are corollas $C_{n\mid g}$ with $n$ inputs/outputs and genus $g.$ Colors of the category $\mathtt{MultiCor}_{dir}$ are $C_{m,n\mid g}$ with $m$ inputs, $n$ outputs and genus $g$. 

Before moving on, we define some (symmetric monoidal and operadic) sub-categories of $\mathtt{MultiCor}$ and $\mathtt{MultiCor}_{dir}.$ Note that given a colored operad $\O$ (equivalently, an operadic pair $\C, \Pi$) together with a subset of colors $J'\subset J$ (equivalently, $J'\subset \C_{\langle 1\rangle}$), there is a full colored sub-operad $\O_{J'}$ on these colors (equivalently, the full subcategory $\C_{J'}\subset\C$ tensor generted by $J'$ is once again operadic). We isolate several special cases that will be of interest to us. Define $\mathtt{MultiCor}_s\subset \mathtt{MultiCor}$ to be the span of ``stable corollas'', to be the full sub-category $\sqcup$ generated by all corollas with either $n\ge 1$ or $g\ge 2$ (i.e., excluding $n=0, g=0,1$), and $\mathtt{MultiCor}_{dir,s}$ to be the full sub-category $\sqcup$-spanned by all directed multi-corollas with $2(m+n)+3g\ge 3$. We say an oriented corolla $C_{g\mid m,n}$ is an oriented \emph{bush} if $g = 0$ and there is exactly one outgoing half-edge, i.e.\ $n = 1$. Respectively, an (unoriented) corolla $C_{g\mid n}$ is a bush if $g = 0$ and $n\ge 1$. Define $\mathtt{MultiBush}\subset \mathtt{MultiCor},$ resp., $\mathtt{MultiBush}_{dir}\subset \mathtt{MultiCor}_{dir}$ for the full subcategories spanned by bushes. Note that operations in $\mathtt{MultiBush}_{dir}$ are indexed by root-oriented trees with all vertices of genus zero (a pointed graph is a root-oriented \emph{tree} if there is a single outgoing half-edge, called the root, and a unique oriented path from any vertex or half-edge to the root). Operations in $\mathtt{MultiBush}$ are indexed by unoriented trees. 

Given an operadic pair $\O = (\C, \Pi)$ an \emph{algebra} over $\O$ in a symmetric monoidal category $(\V,\otimes)$ is a strict symmetric monoidal functor $\C\to \V.$ 
\begin{lm}
\begin{enumerate}
\item\label{xcv1} Algebras over $\mathtt{MultiCor}$ are modular operads.
%\item\label{xcv2} Algebras over $\mathtt{MultiBush}$ are cyclic operads. 
\item\label{xcv3} Algebras over $\mathtt{MultiBush}_{dir}$ are operads.
\end{enumerate}
\end{lm}
\begin{proof}
For (\ref{xcv1}) see \cite{getzler-kapranov} and for (\ref{xcv3}) see \cite{heuts_hinich}. We will call algebras over $\mathtt{MultiCor}_s$ \emph{stable modular operads}. It would perhaps make sense to call algebras over $\mathtt{MultiCor}_{dir}$ \emph{directed modular operads}.
\end{proof}

The operads we defined map to each other as in the following diagram. 
\begin{equation}
\begin{tikzcd}
\mathtt{MultiCor}                 & \mathtt{MultiCor}_s \arrow[l]                   & \mathtt{MultiBush} \arrow[l]                  \\
\mathtt{MultiCor}_{dir} \arrow[u] & {\mathtt{MultiCor}_{dir,s}} \arrow[l] \arrow[u] & \mathtt{MultiBush}_{dir} \arrow[l] \arrow[u]
\end{tikzcd}
\end{equation}
Here the vertical maps are given by forgetting orientation. From this diagram of symmetric monoidal categories, we obtain a diagram of corresponding algebra categories, with arrows reversed. In particular, if $\O$ is a stable modular operad then we get ``for free'' a cyclic operad and an ordinary operad by pulling back the functor to the appropriate place in this diagram. 

\subsection{Modular operad structure on $\bar{\ffc}_{*,0,*}$}
Now we would like to define a stable modular operad structure $\O^{mod}_{\bar{\ffc}}$ on $\bar{\ffc}_{*,0,*}$, via the operations $\glue$ and $\selfglue$. While these operations (together with the identity) do generate the category $\t{MultiCor}_s$, we have two mild problems. The first is that $\bar{\ffc}_{0,0,2}$ does not have an identity element, but this can be fixed (as we have seen in section \ref{sec:a8.2}) by extending $\bar{\flc}_{0,0,2}$ to the monoid $\bar{\A}_m$ (in a way that, as we have seen, is compatible with gluing). The second is that the spaces $\bar{\ffc}_{*,0,*}$ are defined as stacks, i.e.\ most naturally objects of a two-category (and not an ordinary category). This is not a salient issue, or an interesting one: it is obvious that the gluing is canonical up to unique isomorphism, and can be upgraded to a real operad, but requires a little bit of extra formalism to formulate correctly. To do this we use $\infty$-operads, and we will use this language (as introduced, e.g., in \cite{htt}) for the remainder of this section. Define $\mathtt{PS}$ to be the category of $\infty$-prestacks, functors from the category of schemes to the $\infty$-category $Sp$ of homotopy types. 

Write $\mathtt{Fin}_+$ for the category of finite sets with partially defined maps (equivalently, the category of pointed sets). The corresponding $\infty$-categorical object is $N\mathtt{Fin}_+,$ the nerve of this category. Write $I_+$ for the set $I$ viewed as an object of $\mathtt{Fin}_+$. The category $\mathtt{PS}_\infty$ has direct products, and is symmetric monoidal under direct product. In the language of $\infty$-category theory, the symmetric monoidal structure is encoded as follows (see \cite{ha}, ...). Let $\mathtt{PS}_\infty^\oplus/\t{Fin}_+$ be the coCartesian fibration of categories corresponding to the functor $\langle n\rangle_+\mapsto \mathtt{PS}_\infty^{\times n}.$ Here to a partially defined map $S_+\to T_+$ given by $f:S'\to T$ for $S'\subset S$, define a functor $F_{f,S'}:\mathtt{PS}_\infty^S\to \mathtt{PS}_\infty^T$, with $F_{f,S'}(\{X_s\})$ having in its $t$th coordinate the object $\prod_{s\in f^{-1}(t)}X_s.$ Write $\mathtt{PS}^\otimes/N\t{Fin}_+$ for the Grothendieck construction associated to $F$. Then the functor $\mathtt{PS}^\otimes/N\t{Fin}_+$ is coCartesian and an $\infty$-operad, hence defines a symmetric monoidal structure.

Let $\mathtt{MultiCor}^+_s$ be the ($1$-)category whose objects are stable multicorollas $M$ and with $\t{Hom}(M, N)$ given by pairs $(M', \Gamma)$ for $M'\subset M$ a union of connected components of $M$ and $\Gamma\in \mathtt{MultiCor}(M', N)$ (with evident composition). The category $\mathtt{MultiCor}^+_s$ is almost the same as $\mathtt{MultiCor}_s,$ except morphisms are allowed to ``forget'' some connected components. The functor $\Pi$ extends to a functor $\Pi^+:\mathtt{MultiCor}^+_s\to \mathtt{Fin}_+$. By a simple definition check, the functor $\Pi^+$ (viewed as a functor of $\infty$-categories) is (after taking nerves) an $\infty$-operad: precisely the derived version of the one that defines the colored operad structure on corollas. 

Define a new category $\big(\O^{mod}_{\bar{ffc}}\big)^{\oplus}$ whose objects are triples $(C, S, X)$, where $C$ is a (labeled) multi-corolla, $S$ is a scheme and $X$ is a \emph{disconnected} framed formal curve over $S$ with components labeled by vertices of $C$, with appropriage genus, and framings of each connected component labeled by half-edges abutting the corresponding vertex (and no markings). Maps $(C,S,X)\to (C',S',X')$ are indexed by triples $(\Gamma, f, \iota= \sqcup\iota_*),$ where $\Gamma\in \mathtt{MultiCor}_s^+(C, C')$ is a graph, $S'\to S$ is a map of schemes and $\iota_*:X'\to f^*(X_{glue})$ is an isomorphism of (disconnected) framed formal curves, from $X'$ to the pullback to $S'$ of the curve $X_{glue}$ obtained by symmetrically gluing $X$ along the framings corresponding to each edge of $\Gamma$, but viewed as a collection of isomorphisms indexed by connected components of $C'$. It is a straightforward check that this category is coCartesian fibered in groupoids over the category $\mathtt{MultiCor}_s^+\times \mathtt{Schemes}^{op},$ hence (its nerve) defines a functor $N\mathtt{MultiCor}_s\times N\mathtt{Schemes}^{op}\to Sp$, equivalently $\mathtt{MultiCor}\to \mathtt{PS}_\infty.$ The decomposition $\iota = \sqcup \iota_*$ upgrades this to a functor $N\mathtt{MultiCor}\to \mathtt{PS}_\infty^\otimes,$ fibered over $\t{Fin}_+$, and it is a definition check to see that this is defines an algebra (in $\mathtt{PS}$) over the $\infty$-category $N\mathtt{MultiCor}_s/\t{Fin}_+$. 

Compatibility with normal-crossings structure and symmetric monoidicity of the functor from normal-crossings schemes to log schemes lets us deduce algebras $O_\ffc^{mod}, O_\flc^{mod}$ fibered in this category over $N\mathtt{MultiCor}_s$ in the $\infty$-category of pre-log stacks (i.e. the category of functors from $N\mathtt{LogSch}\to Sp$), associated with the corresponding logarithmic objects under gluing. 

In genus zero, the two-categorical issues vanish, and the induced cyclic operad, resp., operad $\O^{cyc}_{\bar{\ffc}}$ (as well as log analogues) are defined strictly, on the level of (ind-)schemes (in a way compatible with the $\infty$-categorical definition above). 
\end{appendices}

\newpage

%% \nocite{heuts_hinich} 
%% \nocite{cisinski-moerdijk}
%% \nocite{iwanari}
%% \nocite{pridham}
%% \printbibliography
%% \bibliography{ffcbib}{}
%% \bibliographystyle{plain}
\end{document}